\documentclass[11pt]{article}
%%%%%%%%%%%%%%%%%%%%%%%%%%%%%%%%%%%%%%%%%%%%%%%%%%%%%%%%%%%%%%%%%%%%%%%%%%%%%%%%%%%%%%%%%%%%%%%%%%%%%%%%%%%%%%%%%%%%%%%%%%%%%%%%%%%%%%%%%%%%%%%%%%%%%%%%%%%%%%%%%%%%%%%%%%%%%%%%%%%%%%%%%%%%%%%%%%%%%%%%%%%%%%%%%%%%%%%%%%%%%%%%%%%%%%%%%%%%%%%%%%%%%%%%%%%%

\usepackage{latexsym}
\usepackage{epsfig}
\usepackage{color}
\usepackage{amsmath}
\usepackage{amsthm}
\usepackage{mathrsfs}
\usepackage{array}
\usepackage{amssymb}
\usepackage{graphics}
\usepackage{epsfig}
\usepackage{subcaption}
\usepackage{frcursive}
\usepackage[utf8]{inputenc}
\usepackage[english]{babel}
\usepackage[normalem]{ulem}

\newcommand{\RNum}[1]{\uppercase\expandafter{\romannumeral #1\relax}}
\newcommand{\Rnum}[1]{\lowercase\expandafter{\romannumeral #1\relax}}

\marginparwidth 0pt \oddsidemargin 0pt \evensidemargin 0pt
\marginparsep 0pt \topmargin -45pt \setlength{\textheight}{230mm}
\setlength{\textwidth}{160mm} \footnotesep=9pt

\synctex=1

%
% equations
%

\newcommand{\be}{\begin{eqnarray}}
\newcommand{\ee}{\end{eqnarray}}
\newcommand{\beno}{\begin{eqnarray*}}
\newcommand{\eeno}{\end{eqnarray*}}

%
% theorem/proposition/etc.
%

%\reversemarginpar
\newtheorem{theorem}{Theorem}[section]
\newtheorem{lemma}[theorem]{Lemma}
\newtheorem{proposition}[theorem]{Proposition}
\newtheorem{corollary}[theorem]{Corollary}

\newtheorem{assumption}[theorem]{Assumption}

\theoremstyle{definition}
\newtheorem{definition}[theorem]{Definition}

\theoremstyle{remark}
\newtheorem{remark}[theorem]{Remark}
\newtheorem{example}[theorem]{Example}

%
% proof environment
%

\newcommand{\rmi}{{\rm (i)$\>\>$}}
\newcommand{\rmii}{{\rm (ii)$\>\>$}}

\def \A{\mathbb{A}}

\def \E{\mathbb{E}}
\def \F{\mathbb{F}}

\def \P{\mathbb{P}}

\def \R{\mathbb{R}}
\def \S{\mathbb{S}}
\def \T{\mathbb{T}}

\def \Z{\mathbb{Z}}

%%% cali %%%%

\def\Ac{{\cal A}}

\def\Dc{{\cal D}}

\def\Fc{{\cal F}}

\def\Hc{{\cal H}}

\def\Lc{{\cal L}}

\def\Pc{{\cal P}}

\def\Uc{{\cal U}}

\def\Wc{{\cal W}}

%%% bar %%%%

\def\Pb{{\overline \P}}

%%% hat %%%%

\def\Ph{{\hat P}}

\def\Xh{{\widehat X}}

%%% tilde %%%%

%%% check %%%%

%%% gras %%%%

%%%% operateurs math %%%

%\def\E#1{E\left[{#1}\right]}

%%%% texte dans formule %%%%%%%%%

%%%%%% raccourci commandes %%%

\def\x{\times}

\def\={\;=\;}
\def\.{\;.}

\def\eps{\varepsilon}

%%%%%%% characteres

%\def \i{1\!\mbox{\rm I}}
\def\1{{\bf 1}}

\renewenvironment{proof}{{\bfseries Proof.}}{\qed}

%%%%%%% propres articles

\def\Om{\Omega}
\def\om{\omega}

\def\Omb{\overline{\Om}}

\def\Fcb{\overline{\Fc}}

\def\0{\mathbf{0}}

\def\normeL2#1{\left\|{#1}\right\|_{L^2}}
%%%%%%%%%%%%%%%%%%%%%%%%%%%%%%%%%%%%%%%%%%%%%%%%%%%%%%%%%%%%%%%%%%%%%%%%%%%%%%%%%%%%%%%%%%%%
 %%%%%%%%%%%%%%%%%%%%%%%%%%%%%%%%%%%%%%%%%%%%%%%%%%%%%%%%%%%%%%%%%%%%%%%%%%%%%%%%%%%%%%%%%%%%

\def\Fbb{\overline \F}

\def\Pcb{\overline \Pc}

\def\Omh{\widehat \Om}
\def\Pch{\widehat \Pc}
\def\Ph{\widehat \P}
\def\Fch{\widehat \Fc}

\begin{document}
\title{Duality and approximation of stochastic optimal control problems under expectation constraints
%\thanks{We are grateful to someone for very useful remarks and suggestions.}
}

\author{
Laurent Pfeiffer\footnote{Inria  and  CMAP  (UMR  7641),  CNRS,  Ecole  Polytechnique,  Institut  Polytechnique  de  Paris.  Email: laurent.pfeiffer@inria.fr}
\and Xiaolu  Tan\footnote{Department of Mathematics, The Chinese University of Hong Kong. Email: xiaolu.tan@cuhk.edu.hk}
\and Yulong Zhou\footnote{School of Mathematics, Sun Yat-Sen University, Guangzhou, China. Email: zhouyulong@mail.sysu.edu.cn}
}

\date{\today}

\maketitle

\begin{abstract}
We consider a continuous time stochastic optimal control problem under both equality and inequality constraints on the expectation of some functionals of the controlled process.
Under a qualification condition,
we show that the problem is in duality with an optimization problem involving the Lagrange multiplier associated with the constraints.
Then by convex analysis techniques, we provide a general existence result and some a priori estimation of the dual optimizers.
We further provide a necessary and sufficient optimality condition for the initial constrained control problem.
The same results are also obtained for a discrete time constrained control problem.
Moreover, under additional regularity conditions, it is proved that the discrete time control problem converges to the continuous time problem, possibly with a convergence rate.
This convergence result can be used to obtain numerical algorithms to approximate the continuous time control problem,
which we illustrate by two simple numerical examples.
\end{abstract}

%\begin{abstract}
%	We consider a continuous time stochastic optimal control problem under both equality and inequality constraints on the expectation of some functionals of the controlled process.
%	Under a qualification condition,
%	we obtain a duality result, together with the existence and some estimation on the dual optimizers.
%	We then study a discrete time version of the control problem,
%	and obtain the analogous duality results.
%	Assuming additional regularity conditions, it is proved that the discrete time control problem converges to the continuous time problem, possibly with a convergence rate.
%	This provides in particular some numerical methods to solve the initial continuous time optimal control problem under expectation constraints.
%\end{abstract}

\noindent \textbf{Keywords:} Stochastic optimal control, convex duality,  numerical approximation.

\vspace{2mm}

\noindent \textbf{AMS Subject Classification(2010):} 93E20; 49K45; 60H10.

\section{Introduction}

We study in this article a stochastic optimal control problem under expectation constraints, in a path-dependent framework.
Concretely, let the controlled diffusion process be governed by the dynamics
$$
dX^{\alpha}_t = \mu(t, X^{\alpha}_{t \wedge \cdot}, \alpha_t) dt + \sigma(t, X^{\alpha}_{t\wedge \cdot}, \alpha_t) dB_t,
$$
where $B$ is a Brownian motion. Given functionals $\Phi$, $\Psi_0$, and $\Psi_1$, we consider the constrained optimization problem
\begin{equation} \label{eq:the_problem_to_solve}
\inf_{\alpha}~ \E \big[ \Phi(X^{\alpha}_{\cdot}) \big],
~~\mbox{subject to:}~~
\E[ \Psi_0(X^{\alpha}_{\cdot})] = 0
~\mbox{and}~
\E[\Psi_1(X^{\alpha}_{\cdot})] \le 0.
\end{equation}
In the above problem, $X^{\alpha}_{t \wedge \cdot}$ denotes the stopped process $(X^{\alpha}_{t \wedge s})_{s \in [0,T]}$
and the coefficient functions $\mu$, $\sigma$, $\Phi$, $\Psi_0$ and $\Psi_1$ may depend on the whole paths of $(X^{\alpha}_s)_{s \in [0,T]}$.

To illustrate the applications of \eqref{eq:the_problem_to_solve},
let us first mention the case of probability constraints. 
Such constraints can typically be written as follows:
\begin{equation} \label{eq:proba_constraint}
\mathbb{P} \big[ h(X_\cdot^\alpha) \leq 0 \big] \leq \gamma,
\end{equation}
where $h$ is vector-valued and $\gamma \in (0,1)$.
The constraint \eqref{eq:proba_constraint} can be put in the form $\E[\Psi_1(X^{\alpha}_{\cdot})] \le 0$ by setting
$
\Psi_1(X_\cdot^\alpha)
= \mathbf{1}_{h(X_\cdot^\alpha) \leq 0} - \gamma,
$
where $\mathbf{1}$ denotes the indicator function. Note that such a function $\Psi_1$ is in general discontinuous.
Probability constraints are typically used to take into account undesirable events, which may however arise with nonzero probability whatever the employed control process. We refer to Shapiro, Dentcheva and Ruszczy\'nski \cite[Chapter 4]{SDR} for a general presentation.
Another important application of problem \eqref{eq:the_problem_to_solve} may concern the characterization of the Pareto front in a multi-objective setting, see e.g.\@ Yong and Zhou \cite[Chapter 3]{YongZhou}.
More examples of applications of problem \eqref{eq:the_problem_to_solve}, such as the control problem under state constraints, the semi-static hedging problem in finance, will also be discussed in Section \ref{sec:Formulation}.

In the literature, different approaches have been proposed to study the constrained control problem \eqref{eq:the_problem_to_solve}.
The first one consists in introducing an additional process, denoted by $(Y^0_t, Y^1_t)_{t \in [0,T]}$, with initial value $Y^i_0 = 0$ and final value $Y^i_T = \Psi_i(X^{\alpha}_{\cdot})$, $i=0,1$, and having a martingale or submartingale dynamics.
Then one can either reformulate the initial problem as a stochastic target problem which can be studied by the geometric dynamic programming approach (see e.g.\@ Soner and Touzi \cite{SonerTouzi}, Bouchard, Elie and Imbert \cite{BEI},  Bouchard and Nutz \cite{BouchardNutz}, Chow, Yu and Zhou \cite{CYZ}, etc.),
or one can use the so-called \emph{level-set approach} to reformulate it into an optimization problem over a family of (unconstrained) singular control problems (see e.g.\@ Bokanowski, Picarelli and Zidani \cite{BPZ} and the references therein). 
In the second main approach, one looks for necessary optimality conditions in the form of Pontryagin's maximum principle, involving a Lagrange multiplier, see e.g.\@ Yong and Zhou \cite[Chapter 3]{YongZhou} and Bonnans and Silva \cite[Section 5]{BonnansSilva}. Such optimality conditions are obtained with the variational technique, which consists in calculating a first-order Taylor expansion of the cost with respect to small perturbations of the control. The constraints are usually tackled with Ekeland's principle or with the Lyusternik-Graves theorem (under a qualification condition).

In the present work, we follow a \emph{duality approach} by considering the following dual problem:
\begin{equation} \label{eq:the_dual_problem}
\sup_{\lambda_0 \in \R, \, \lambda_1 \ge 0} \, \inf_{\alpha} \ \ \E \big[ \Phi(X^{\alpha}_{\cdot}) + \lambda_0 \Psi_0(X^{\alpha}_{\cdot}) + \lambda_1 \Psi_1(X^{\alpha}_{\cdot})  \big].
\end{equation}
The above problem can be formally obtained by writing \eqref{eq:the_problem_to_solve} as a saddle-point problem and then by exchanging the $\inf$ and $\sup$ operators.
Our duality result, Theorem \ref{thm:duality}, states that there is no duality gap between \eqref{eq:the_problem_to_solve} and \eqref{eq:the_dual_problem}, i.e.\@ the two problems have the same value.
Such a duality result should be in line with Kantorovich's duality for the optimal transport (OT) problem,
recalling that the marginal constraint in OT is equivalent to infinitely many expectation constraints.
Let us refer in particular to Mikami and Thieullen \cite{MikamiThieullen}, Tan and Touzi \cite{TanTouzi} for an optimal transport problem along controlled diffusion processes.
In contrast to the duality theory in OT, which is generally based on the compactness of the control space satisfying the marginal constraints, together with the regularity of the reward/cost function,
we will develop and explore here the Lagrange relaxation approach which was utilized in Pfeiffer \cite{Pfeiffer} for a discrete time control problem.

In a first step, we develop the Lagrange relaxation approach in an abstract framework,
for a problem in the form:
$$
\inf_{\P \in \Pc} \E^{\P}[ \xi],
\quad \text{subject to:}\quad
\E^{\P}[\zeta_0] = 0,~\E^{\P}[\zeta_1] \le 0,
$$
where $\Pc$ is a family of probability measures on an abstract measurable space $(\Om, \Fc)$.
By assuming that $\Pc$ is convex, and using a qualification condition,
we obtain an abstract duality result.
To reduce the constrained stochastic control problem \eqref{eq:the_problem_to_solve} to the above abstract framework,
we use a weak formulation of the control problem by considering the law of controlled processes on the canonical space.
Such an approach allows to study the problem in a very general framework,
in both discrete time and continuous time setting, Markovian or path-dependent,
and requires no regularity on the reward/cost function.

In a second step, we explore properties of the initial constrained optimization problem as well as its dual problem.
First, we provide a general existence result on the optimizers of the dual problem,
together with some estimation.
Next, using the dual optimizer, we obtain a necessary and sufficient optimality condition for the  initial constrained problem.
To the best of our knowledge, such an optimality condition in the non-Markovian setting should be new in the literature.
Restricting to the Markovian setting, our optimality condition reduces to the \emph{optimality conditions in variational form}  in Pfeiffer \cite[Section 4]{pfeiffer19},
and could lead naturally to the necessary condition in the maximum principle in \cite{YongZhou,BonnansSilva} described previously.

Finally, we also consider a discrete time control problem under constraints
and study its convergence to the continuous time problem.
Using the (Wasserstein) weak convergence technique, together with some a priori estimation on the dual problem,
we are able to provide a very general approximation result.
Moreover, with the a priori bound of the dual optimizer, and based on the approximation results for control problem without constraints,
we can obtain some results on the convergence rate.
This approximation result can be used to suggest numerical algorithms to approximate the continuous time problem.
In particular, it can be considered as an extension of the classical weak convergence approach of Kushner and Dupuis \cite{KushnerDupuis} to approximate control problems without constraints.
Notice also that similar numerical algorithms have been used in \cite[Section 5]{TanTouzi} for a stochastic optimal transport problem.
Our technique, based on the (Wasserstein) weak convergence arguments and a priori dual optimizer estimation, leads to a much more general convergence result, 
and with possible convergence rate, which is new in the literature.

The dual approach investigated in this work offers two main advantages.
	Compared with the geometric dynamic programming approach \cite{BEI}, the level-set approach \cite{BPZ}, and the maximum principle approach \cite{BonnansSilva}, which need to stay in a Markovian context, it applies in a general non-Markovian context (for both discrete time and continous time settings). Moreover, it allows to obtain novel numerical approximation methods. In the Markovian case, we can prove some convergence rates; this is another novelty in comparison with the previous literature. See also Remark \ref{rem:discussion_numer_method} for more discussions.

The rest of the paper is organized as follows.
In Section \ref{sec:Formulation}, we provide a detailed weak formulation of the control problem under expectation constraints, in both continuous time version and discrete time version.
Then in Section \ref{sec:MainResults}, we give the technical conditions as well as the main results, including the main duality and approximation results.
In Section \ref{sec:Numerical}, we provide two simple numerical examples,
by considering a linear quadratic control problem under constraints. 
The technical proofs of the main results are completed in Section \ref{sec:Proofs}.

\section{Optimal control problems under expectation constraints}
\label{sec:Formulation}

We provide here a weak formulation of the optimal control problem under expectation constraints, in both continuous time version and discrete time version,
as well as the corresponding dual formulations.

\subsection{An optimal control problem under constraints in continuous time}

Let $n > 0$ be a positive integer, $T >0$ and $\Om := C([0,T], \R^n)$ denote the canonical space of all $\R^n$-valued continuous paths on $[0,T]$,
with canonical process $X$. %, and canonical filtration $\F=(\Fc_t)_{0 \le t \le T}$.
Let $\| \cdot \|$ denote the uniform convergence norm on $\Om$.
Let $(A, \rho)$ be a nonempty Polish space and  $(\mu, \sigma) \colon [0,T] \x \Om \times A \to \mathbb{R}^n \times \mathbb{S}^n$ be the coefficient functions for the controlled diffusion processes,
where $\mathbb{S}^n$ denotes the collection of all $n \times n$-dimensional matrices.
We assume that $\mu$ and $\sigma$ are non-anticipative, 
in the sense that $(\mu, \sigma)(t, \om, a) = (\mu, \sigma)(t, \om_{t \wedge \cdot}, a)$ for all $(t, \om, a) \in [0,T] \x \Om \x A$.
Moreover, for  some fixed point $a_0 \in A$, there exists a constant $K > 0$ such that, for all $(t, \om, a) \in [0,T] \x \Om \x A$,
\begin{equation} \label{eq:growth_condition}
 \big| (\mu, \sigma)(t, \om, a) \big| \le K (1 + \|\om_{t \wedge \cdot}\| + \rho(a, a_0) \big).
\end{equation}
We also fix $x_0 \in \R^n$ as initial position of the controlled process.

\begin{definition} \label{def:weak_ctrl}
A term $\gamma = (\Om^{\gamma}, \Fc^{\gamma}, \F^{\gamma}=(\Fc^{\gamma}_t)_{t \in [0,T]}, \P^{\gamma}, X^{\gamma}, \alpha^{\gamma}, B^{\gamma})$ is a weak control term 
if $ (\Om^{\gamma}, \Fc^{\gamma}, \F^{\gamma}, \P^{\gamma})$ is a filtered probability space, 
in which $B^{\gamma}$ is an $n$-dimensional standard Brownian motion, 
$\alpha^{\gamma}$ is an $A$-valued predictable process satisfying
\be \label{eq:Moment}
\E^{\P^{\gamma}} \Big[ \|X^{\gamma} \|^2 +  \int_0^T \big( \rho(\alpha^{\gamma}_t, a_0) \big)^2 dt \Big] < \infty,
\ee
and $X^{\gamma}$ is an $\R^n$--valued adapted continuous process such that
\be \label{eq:SDE_strong}
\qquad X^{\gamma}_t = x_0 + \int_0^t \mu(s, X^{\gamma}_{s\wedge \cdot}, \alpha^{\gamma}_s) ds + \int_0^t  \sigma(s, X^{\gamma}_{s\wedge \cdot}, \alpha^{\gamma}_s) d B^{\gamma}_s, ~~t \in [0,T],~~ \P^{\gamma}\text{-a.s.}
\ee
Let us denote by $\Gamma$ the collection of all weak control terms.
\end{definition}

\begin{remark}
$\mathrm{(i)}$ The above control is called a weak control because the probability space and the Brownian motion are not a priori fixed, and more importantly, 
the control process may not be adapted to the filtration generated by the Brownian motion.
Using the martingale problem, one can interpret the weak controls as probability measures on the canonical space,
and the set of weak control (measures) is convex (but may not be closed).
Such a convexity property will be essential in the proof of the duality result.
The weak control may be different from the classical strong control, where a fixed probability space and a Brownian motion are given.
See also Section \ref{subsec:main_result_discuss} for a detailed comparaison between the strong and weak formulation of the constrained control problem. \\[1mm]
$\mathrm{(ii)}$ The growth condition \eqref{eq:growth_condition}, together with the integrability condition \eqref{eq:Moment}, ensures that  the (stochastic) integrals in \eqref{eq:SDE_strong} are well defined. 
In particular, one has
$$
\int_0^T \big( \big| \mu(s, X^{\gamma}_{s\wedge \cdot}, \alpha^{\gamma}_s) \big| + \big|  \sigma(s, X^{\gamma}_{s\wedge \cdot}, \alpha^{\gamma}_s) \big|^2 \big) ds < \infty, ~\P^{\gamma} \mbox{--a.s.}
$$
At the same time, there is a freedom to choose the metric $\rho$ for the Polish space $A$.
In particular, when $(\mu, \sigma)$ are uniformly bounded, one can choose a uniformly bounded metric $\rho$ so that \eqref{eq:Moment} holds true for any process $\alpha^{\gamma}$.
\end{remark}

Let $m, \ell \ge 0$ and let $\Phi \colon \Om \to \R$ and $\Psi_i \colon \Om \to \R$, $i=1, \cdots, m+\ell$
be functionals defined on $\Om$, 
we consider the following optimization problem under expectation constraints:
\be \label{eq:def_V} 
V(z)
:= 
\inf_{\gamma \in \Gamma(z)} \E^{\P^{\gamma}} \big[ \Phi \big( X^{\gamma}_{\cdot} \big) \big],
\ee
where for all $z \in \R^m \x \R^{\ell}$,
\be \label{eq:def_Uc}
\Gamma(z) 
&:=& 
\big\{\gamma \in \Gamma ~: 
\E^{\P^{\gamma}} \big[ \Psi_i \big( X^{\gamma}_{\cdot} \big) \big] = z_i, ~i=1, \cdots, m,  \nonumber \\
&&
~~~~~~~~~~~~~\mbox{and}~~
\E^{\P^{\gamma}} \big[\Psi_{m+j} \big(X^{\gamma}_{\cdot} \big) \big] \le z_{m+j}, 
~ j=1,\cdots, \ell 
\big\}.
\ee
We are actually mostly interested in the case $z=0$ but directly consider a parameterized version of the problem in \eqref{eq:def_V}, for conviniency. Integrability condtions on $\Phi$ and $\Psi$ will be specified in Section \ref{subsec:assumptions}.

\begin{remark}
We can consider more general costs and constraints, such as
$$
\inf_{\gamma \in \Gamma(z)} \E^{\P^{\gamma}}\Big[
\int_0^T L\big(s, X^{\gamma}_{s \wedge \cdot}, \alpha^{\gamma}_s \big) ds + \Phi \big( X^{\gamma}_{\cdot} \big)
\Big],
$$
for some function $L \colon [0,T] \x \Om \x A \to \R$.
By introducing an $n +1$ dimensional process $\widetilde X$ defined by $\widetilde X^i := X^i$ for $i=1, \cdots, n$ and $\widetilde X^{n+1}_t := \int_0^t L(s, X^{\gamma}_{s \wedge \cdot}, \alpha^{\gamma}_s) ds$
and the corresponding new criteria function, one can reduce the problem to the formulation \eqref{eq:def_V}.
From a numerical approximation point of view, it is always better to stay in the minimal dimension context.
Here we would like to use this formulation (\`a la Mayer) to study the problem for ease of presentation
and leave the dimensional reduction to the numerical implementation step.
\end{remark}
 
\paragraph{A dual formulation}

By penalizing the constraints, it is clear that for $z =0$,
$$
V(0) 
= 
\inf_{\gamma \in \Gamma} \sup_{\lambda \in \R^m \x \R^\ell_+}  \E^{\P^{\gamma}} \big[ \Upsilon(X^{\gamma}_{\cdot},\lambda) \big],
$$
where
\begin{equation*}
\Upsilon(\omega,\lambda)= \Phi(\omega) + \lambda \cdot \Psi(\omega).
\end{equation*}
It follows that a dual formulation of the optimization problem $V(0)$ can be given by
\be \label{eq:def_D}
D(0) := \sup_{\lambda \in \R^m \x \R^\ell_+} d(\lambda),
~~~\mbox{where}~~
d(\lambda) := \inf_{\gamma \in \Gamma} \ \E^{\P^{\gamma}} \big[ \Upsilon(X^{\gamma}_{\cdot},\lambda) \big].
\ee	

\paragraph{Some examples}
We also provide some examples of applications of the above constrained optimization problem.

\begin{example}[Optimal control under state constraint] \label{example:state_constraints}
The optimal control problem under state constraint has been studied by different papers (see e.g.\@ \cite{BPZ} and the references therein), it can be formulated as follows:
\be \label{eq:Ctrl_State_Constraint}
\inf \big\{ \E\big[ \Phi(X^{\gamma}_T) \big] ~: \gamma  \in \Gamma, ~ X^{\gamma}_t \in E ~\mbox{a.s.\@ for all}~t \in [0,T]
\big\},
\ee
where $E$ is a closed convex subset of  $\R^n$.
Let us denote by $d(x,E)$ the distance between $x$ and $E$, and let us define
$$
\Psi(X^{\gamma}_{\cdot}) ~:=~ \max_{0 \le t \le T} d (X^{\gamma}_t , E),
$$
then Problem \eqref{eq:Ctrl_State_Constraint} is equivalent to the following problem under expectation constraint (in a path-dependent version):
$
\inf \big\{ 
\E\big[ \Phi(X^{\gamma}_T) \big] 
~: \gamma \in \Gamma, ~ \E\big[ \Psi(X^{\gamma}_{\cdot}) \big] = 0
\big\}.
$
\end{example}

\begin{example}[Semi-static super-replication problem in finance] \label{exam:vol_uncertain}
Let us consider an uncertain volatility financial model, where the interest rate $r=0$ and a risky asset price follows the dynamics
$dS^{\gamma}_t = \sigma^{\gamma}_t S^{\gamma}_t dW_t$ with an adapted process  $\sigma^{\gamma} = (\sigma^{\gamma}_t)_{0 \le t \le T}$.
Denote by $\Gamma$ the collection of all models $\gamma = (S^{\gamma}, \sigma^{\gamma})$ such that $\sigma^{\gamma}$ takes value in $[\underline \sigma, \overline \sigma]$, 
and assume that the market model is uncertain (unknown), but the class of all possible market dynamics is given by
$\{ (S^{\gamma}_t)_{t \in [0,T]} ~: \gamma  \in \Gamma \}$.

A dynamic trading strategy is an adapted process $H = (H_t)_{0 \le t \le T}$ and $(H \cdot S)_T := \int_0^T H_t dS^{\gamma}_t$ is the Profit \& Loss of the dynamic strategy. Denote by $\Hc$ the class of all dynamic strategies.
On the market, there are two vanilla options of payoff $\Psi_1(S^{\gamma}_T)$ and $\Psi_2(S^{\gamma}_T)$ at maturity $T$.
Assume that an agent is allowed to buy or to sell the first option with price $z_1$, and is allowed to buy the second option with price $z_2$ (but cannot short/sell it).
There is another option with payoff $\Phi(S^{\gamma}_{\cdot})$, and the agent aims to super-replicate it with both dynamic trading strategy $H$ and static strategy on options $\Psi_1$ and $\Psi_2$.
Then the collection of all super-replication strategies is given by
\begin{align*}
\Dc := & 
\big \{(x, H, \lambda_1, \lambda_2) \in \R \x \Hc \x \R \x \R_+  ~:\\
& \quad
x+ (H \cdot S^{\gamma})_T + \lambda_1 (\Psi_1(S^{\gamma}_T) - z_1) + \lambda_2 (\Psi_2(S^{\gamma}_T) - z_2) \ge \Phi(S^{\gamma}_{\cdot}), \mbox{a.s.\@ } \forall \gamma  \in \Gamma
\big\},
\end{align*}
and the minimal super-replication cost of option $\Phi$ is given by
$$
\inf \big\{ x ~: (x,H, \lambda_1, \lambda_2) \in \Dc
\big\}.
$$

Next, by the duality theory (see e.g.\@ Denis and Martini \cite{DenisMartini}) in finance, the above problem has a dual formulation:
$$
\inf_{(\lambda_1, \lambda_2) \in \R \x \R_+} \ \,
\sup_{\gamma  \in \Gamma} \ \,
\E \big[ \Phi(S^{\gamma}_{\cdot}) - \lambda_1(\Psi_1(S^{\gamma}_T) - z_1) - \lambda_2(\Psi_2(S^{\gamma}_T) -z_2) \big],
$$
which is equivalent to a constrained optimal control problem:
$$
\sup_{\gamma  \in \Gamma} \ \big\{ \E [\Phi(S^{\gamma}_{\cdot})] ~: \gamma  \in \Gamma, ~ \E [\Psi_1(S^{\gamma}_T)] = z_1,~\E [\Psi_2(S^{\gamma}_T)] \le z_2
\big\}.
$$
Notice that the above application in finance has also motivated the so-called optimal Skorokhod embedding problem, and the martingale optimal transport problem which has recently generated an important literature (see e.g.\@ \cite{BCH} among many others). 
\end{example}

\subsection{A discrete time version of the constrained control problem}

Let us introduce a discrete time version of the constrained control problem, with the objective to approximate the continuous time problem \eqref{eq:def_V}.

Let $N > 0$ be an integer and $h:= \frac{T}{N}$, denote $t_k := k h$ and $\T_h := \{t_0, t_1, \cdots, t_N\}$.
Let $H_h: \T_h \x \Om \x A \x [0,1] \to \R^n$ be a function satisfying
$$
H_h(t_k, \om, a, u) = H_h(t_k, \om_{t_k \wedge \cdot}, a ,u),
~~\mbox{for all}~
(t_k, \om, a , u) \in \T_h \x \Om \x A \x [0,1].
$$

\begin{definition} \label{def:Control_h}
We say that
$$
\gamma 
~=~
\big( \Om^{\gamma}, \Fc^{\gamma}, \F^{\gamma}, \P^{\gamma}, \alpha^{\gamma}, X^{\gamma}, U^{\gamma}
\big)
$$
is a weak discrete time control term
if $(\Om^{\gamma}, \Fc^{\gamma}, \F^{\gamma}, \P^{\gamma})$ is a filtered probability space,
equipped with an $A$-valued process $\alpha^{\gamma}$, an $\R^n$--valued process $X^{\gamma}$, and a $[0,1]$--valued process $U^{\gamma}$, which are all adapted,
and for each $k=1, \cdots, N$, $U_{t_k}^{\gamma}$ is of uniform distribution on $[0,1]$ and independent of $\Fc^{\gamma}_{t_{k-1}}$,
and finally, $X^{\gamma}_0 = x_0$,
\begin{equation} \label{eq:X_gamma_dynamic}
X^{\gamma}_{t_k +1} 
~=~
X^{\gamma}_{t_k} 
~+~ 
H_h \big(t_k, \widehat X^{\gamma}_{t_k \wedge \cdot}, \alpha^{\gamma}_{t_{k}}, U^{\gamma}_{t_{k+1}} \big),
\end{equation}
where $\widehat X^{\gamma}$ denotes the continuous time process on $[0,T]$ obtained by linear interpolation of $(X^{\gamma}_{t_i})_{t_i \in \T_h}$.
\end{definition}

Let us denote by $\Gamma_h$ the collection of all weak discrete time control rules,
and by $\Gamma_h(z) \subset \Gamma_h$, for $z \in \R^{m+\ell}$, the subset of all controls $\gamma$ such that
\begin{equation} \label{eq:def_Uc_h}
	\E^{\P^{\gamma}}\big[ \Psi_i \big( \widehat X^{\gamma}_{\cdot}\big) \big] = z_i,
	~i = 1, \cdots, m,
	~~~\mbox{and}~~
	\E^{\P^{\gamma}}\big[ \Psi_{m+j} \big( \widehat X^{\gamma}_{\cdot}\big) \big] \le z_{m+j},
	~j=1, \cdots, \ell.
\end{equation}
Then a weak formulation of the discrete time constrained problem is given by
\be \label{eq:def_Vh}
V_h(z) ~:=  \inf_{\gamma \in \Gamma_h(z)}  \E^{\P^{\gamma}} \big[ \Phi \big(\widehat X^{\gamma}_{\cdot} \big) \big].
\ee

\paragraph{A dual formulation}

As in \eqref{eq:def_D}, we introduce the corresponding dual problem for the discrete time optimization problem, given by
\be \label{eq:dual_h}
D_h(0) :=  \sup_{\lambda \in \R^m \x \R^{\ell}_+}  d_h(\lambda),
~~~\mbox{where}~~
d_h(\lambda) :=   \inf_{\gamma \in \Gamma_h}  \E^{\P^{\gamma}} \big[ \Upsilon(\widehat X^{\gamma}_{\cdot},\lambda \big) \big].
\ee

\begin{remark}
In the definition of the weak control term in Definition \ref{def:Control_h}, if we fix the filtration to be the one generated by $(U^{\gamma}_k)_{k=1, \cdots, N}$, i.e.\@ $\Fc^{\gamma}_k = \sigma( U^{\gamma}_i, ~i \le k)$,
then it can be considered as a strong version of the control rule.
For the problem $d_h(\lambda)$ in \eqref{eq:dual_h}, it is equivalent to consider only strong control rules by the dynamic programming principle.
However, it may change the value of control problem under constraints in \eqref{eq:def_Vh}.
\end{remark}
	
\section{Main results}
\label{sec:MainResults}

We now provide the  assumptions and main results of the paper, including our duality and approximation results.
We then also discuss the numerical resolution of the discrete time (dual) constrained control problem.

\subsection{Assumptions} \label{subsec:assumptions}

The first assumption is on the integrability of the  functionals $\Phi$ and $\Psi$,
and a Robinson qualification condition on the constraints in both continuous and discrete time setting.

\begin{assumption} \label{assum:qualification}
$\mathrm{(i)}$
The random variables $\Phi(X^{\gamma}_{\cdot})$, $\Psi_i(X^{\gamma}_{\cdot})$,  $\Phi(\Xh^{\gamma_h}_{\cdot})$ and $\Psi_i(\Xh^{\gamma_h}_{\cdot})$ are integrable,
for all $i=1, \cdots, m+\ell$, $\gamma \in \Gamma$ and $\gamma_h \in \Gamma_h$.
Moreover, there is a constant $M > 0$ such that
$$
\sup_{\gamma \in \Gamma} ~\Big| \E^{\P^{\gamma}}[\Phi(X^{\gamma}_{\cdot})]  \Big| ~\le~ M
~~\mbox{and}~~
\sup_{\gamma_h \in \Gamma_h}~ \Big| \E^{\P^{\gamma_h}}[\Phi(\Xh^{\gamma_h}_{\cdot})]  \Big| ~\le~ M.
$$		

\noindent $\mathrm{(ii)}$ There is a constant $\eps > 0$ such that
\be \label{eq:qualification_conti_time}
B_{\eps}(0) \subset \mathrm{Conv}\big\{z \in \R^{m+ \ell} ~: \exists \gamma \in \Gamma, ~\mbox{s.t.}~ \E^{\P^{\gamma}}[\Psi(X^{\gamma}_{\cdot})] + z \in \{0_m\} \x \R^\ell_{-} \big\},
\ee
and
\be \label{eq:qualification_discrete_time}
\qquad B_{\eps}(0) \subset \mathrm{Conv}\big\{z \in \R^{m+ \ell} ~: \exists \gamma_h \in \Gamma_h, ~\mbox{s.t.}~ \E^{\P^{\gamma_h}}[\Psi(\widehat X^{\gamma_h}_{\cdot})] + z \in \{0_m\} \x \R^\ell_{-} \big\},
\ee
where $0_m \in \R^m$ denotes the original point in $\R^m$ and $B_{\eps}(0)$ denotes the closed ball in $\R^{m + \ell}$ with center 0 and radius $\eps$ for the supremum norm.
\end{assumption}

\begin{remark}
	\noindent $\mathrm{(i)}$
	In our technical proof, the boundedness condition in Assumption \ref{assum:qualification}$\mathrm{(i)}$ is only needed to ensure that
	\begin{equation} \label{eq:condit_bounded}
		| V(z) | \le M
		~~\mbox{and}~~
		|V_h(z)| \le M,
		~~\mbox{for all}~~
		z \in B_{\eps}(0).
	\end{equation} 
	In this abstract framework, $V(z)$ and $V_h(z)$ denote the value function of our main problems, which are unknown a priori.
	We therefore prefer to formulate a sufficient condition on $\Gamma$, $\Gamma_h$ and $\Phi$ as in Assumption \ref{assum:qualification}$\mathrm{(i)}$ to ensure \eqref{eq:condit_bounded}.
	In more concrete examples, it is certainly possible to work with weaker conditions that ensure  \eqref{eq:condit_bounded}, so that the results in the following still hold.

	\noindent $\mathrm{(ii)}$ The conditions in Assumption \ref{assum:qualification}$\mathrm{(ii)}$ are Robinson qualification conditions for the constrained optimization problem $V(z)$ in \eqref{eq:def_V} and $V_h(z)$ in \eqref{eq:def_Vh} with $z = 0$.
	They ensure that the equality and inequality constraints in \eqref{eq:def_Uc} and \eqref{eq:def_Uc_h} can be satisfied by some control $\gamma \in \Gamma$ or $\gamma_h \in \Gamma_h$ for $z = 0$.
	More importantly, with some flexibility, they are satisfied for all $z \in B_{\eps}(0)$.
	In Section \ref{subsec:main_result_discuss}, we will also provide a discussion on how to check this qualification condition.
\end{remark}

	The next assumption will be essentially used to deduce the convergence of the discrete time control problem to the continuous time problem as the time step $\Delta t \longrightarrow 0$.

\begin{assumption} \label{assum:approximation}
$\mathrm{(i)}$ The Polish space $A$ is compact, and the associated metric $\rho$ is uniformly bounded.
The function $\mu$ and $\sigma$ are continuous in all arguments and for some constant $K > 0$,
\begin{equation} \label{eq:Lip}
\|(\mu, \sigma)(t, \om, a) - (\mu, \sigma)(t, \om', a)\| \le K \|\om_{t \wedge \cdot} - \om'_{t \wedge \cdot}\|,
~~\mbox{for all}~
(t, \om, \om', a).
\end{equation}
Moreover,  the functions $\Phi(\om)$ and $\Psi_i(\om)$, $i=1, \cdots, m+\ell$ are continuous and bounded by $K(1+\|\om\|^2)$ for some constant $K > 0$.

	\vspace{0.5em}

	\noindent $\mathrm{(ii)}$  For every $(t_k, \om, a) \in \T_h \x \Om \x A$, with a random variable $U$ of uniform distribution $\Uc[0,1]$, one has
\begin{equation} \label{eq:consistency}
\E [H_h(t_k, \om, a, U) ] = \mu(t_k, \om, a) h,
~~~
\mathrm{Var} [H_h(t_k, \om, a, U) ] = \sigma \sigma^{\top}(t_k, \om, a) h.
\end{equation}
Moreover, there is some constant $C > 0$ such that $\E \big[ \big| H_h(t_k, \om, a, U) \big|^3 \big] \le C h^{3/2}$ for all $(t_k, \om, a) \in \T_h \x \Om \x A$ and $h > 0$.
\end{assumption}

\begin{remark}
	$\mathrm{(i)}$ The Lipschitz condition \eqref{eq:Lip} is the standard condition to ensure that the SDE \eqref{eq:SDE_strong} has a unique strong solution with a given control process $\alpha^{\gamma}$.

	\vspace{0.5em}
	
	\noindent $\mathrm{(ii)}$ The conditions in \eqref{eq:consistency} ensure that the discrete time controlled processes are good approximations of the continuous time controlled processes when the time step $h = T/N$ is small.
	To see this, let us consider the simple one-dimensional ($d=1$) example where $\mu \equiv \mu_0$ and $\sigma \equiv \sigma_0$ for some constants $\mu_0, \sigma_0 \in \R$.
	In this case, one has $X^{\gamma}_t = x_0 + \mu_0 t + \sigma_0 B^{\gamma}_t$,
	so that 
	$$
		\E [ X^{\gamma}_{t+h} - X^{\gamma}_t ] = \mu_0 h,
		~~\mbox{and}~~
		\mathrm{Var} [X^{\gamma}_{t+h} - X^{\gamma}_t] = \sigma_0^2 h.
	$$
	The condition $\E \big[ \big| H_h(t_k, \om, a, U) \big|^3 \big] \le C h^{3/2}$ ensures in addition that the set of distributions induced by the discrete time controls is relatively compact under the Wasserstein distance (which will be used in Section \ref{subsec:proof_cvg}).
\end{remark}

\subsection{Duality and approximation of the constrained control problems}

Our first main result is on the duality of  the two constrained problems,
together with the existence of dual optimizers and some estimation.
Let us denote
$$
\| \lambda \|_1 := \sum_{i=1}^{m+\ell} |\lambda_i|,
~~\mbox{for}~
\lambda = (\lambda_1, \cdots, \lambda_{m+\ell}) \in \R^{m + \ell}.
$$

\begin{theorem} \label{thm:duality}
Let Assumption \ref{assum:qualification} hold true.

	\noindent \rmi We have
$$
V (0) = D (0)
\quad \mbox{and} \quad
V_h (0) = D_h (0).
$$
Moreover, for the dual problem $D$ (resp.\@ $D_h$), there exist optimal solutions;
and any optimal solution $\lambda^* \in \R^m \x \R^{\ell}_+$ (resp.\@ $\lambda_h^* \in \R^m \x \R^{\ell}_+$) satisfies
$\| \lambda^* \|_{1} \le \frac{2M}{\eps}$ (resp.\@ $\| \lambda_h^* \|_{1} \le \frac{2M}{\eps}$). \\[1mm]
\noindent \rmii
Let $\gamma^* \in \Gamma(0)$. Then $\gamma^*$ is a (global) solution to the constrained problem \eqref{eq:def_V} if and only if there exists $\lambda^* \in \R^m \times \R^{\ell}_+$ such that the following conditions hold true:
\begin{align}
& \text{[Complementarity]} \hspace{-5mm} && \E^{\P^{\gamma^*}} \Big[ \sum_{i=1}^{m+\ell} \lambda^*_i \Psi_i(X^{\gamma^*}_{\cdot}) \Big] = 0, \label{eq:complementarity_true} \\[1em]
& \text{[Stationarity]} && \E^{\P^{\gamma^*}}\big[ \Upsilon(X^{\gamma^*}_{\cdot},\lambda^*) \big] 
=
\inf_{\gamma \in \Gamma} \E^{\P^{\gamma}}\big[ \Upsilon(X^{\gamma}_{\cdot},\lambda^*) \big].  \label{eq:min_pb_lagrangien}
\end{align}
In this case, $\lambda^*$ is a solution to the dual problem (i.e.\@ $d(\lambda^*)= D(0)$). \\[1mm]
\noindent {\rm (ii')$\>\>$}		
Let $\gamma_h^* \in \Gamma_h(0)$. Then $\gamma_h^*$ is a (global) solution to the constrained problem \eqref{eq:def_Vh} if and only if there exists $\lambda_h^* \in \R^m \times \R^{\ell}_+$ such that the following conditions hold true:
\begin{align}
& \text{[Complementarity]} \hspace{-0mm} && \E^{\P^{\gamma_h^*}} \Big[ \sum_{i=1}^{m+\ell} \lambda_{h,i}^* \Psi_i(\Xh^{\gamma_h^*}_{\cdot}) \Big] = 0, \label{eq:complementarity_true_disc} \\[1em]
& \text{[Stationarity]} && \E^{\P^{\gamma_h^*}}\big[ \Upsilon(\Xh^{\gamma_h^*}_{\cdot}, \lambda_h^*) \big] 
=
\inf_{\gamma_h \in \Gamma_h} \E^{\P^{\gamma_h}}\big[ \Upsilon(\Xh^{\gamma_h}_{\cdot}, \lambda_h^* ) \big]. \label{eq:min_pb_lagrangien_disc}
\end{align}
In this case, $\lambda_h^*$ is a solution to the discrete dual problem (i.e.\@ $d_h(\lambda_h^*)= D_h(0)$).	
\end{theorem}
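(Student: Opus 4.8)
The plan is to set up an abstract Lagrangian duality result for a constrained optimization problem over a convex set of probability measures, and then apply it twice — once to the continuous time problem $V(0)$ and once to the discrete time problem $V_h(0)$ — since the two settings are structurally identical once we translate weak control terms into probability measures on the canonical space via the martingale problem. Concretely, I would first record that both $\Gamma$ and $\Gamma_h$ map to convex sets $\Pc$, $\Pc_h$ of probability laws on $\Om$ (the convexity being exactly the point flagged in the remark after Definition \ref{def:weak_ctrl}), and that the maps $\P \mapsto \E^\P[\Phi]$ and $\P \mapsto \E^\P[\Psi_i]$ are affine. I would then state and prove the abstract result: for $\xi$, $\zeta = (\zeta_1,\dots,\zeta_{m+\ell})$ integrable under every $\P \in \Pc$ with $\sup_{\P}|\E^\P[\xi]| \le M$, and under the Robinson-type qualification $B_\eps(0) \subset \mathrm{Conv}\{z : \exists \P \in \Pc,\ \E^\P[\zeta] + z \in \{0_m\}\times\R^\ell_-\}$, one has no duality gap, the dual sup is attained, and every dual optimizer $\lambda^*$ satisfies $\|\lambda^*\|_1 \le 2M/\eps$.

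For the abstract result the key steps, in order, are: (1) define the value function $v(z) := \inf\{\E^\P[\xi] : \P \in \Pc,\ \E^\P[\zeta] \in z + (\{0_m\}\times\R^\ell_-)\}$ and check it is convex on $\R^{m+\ell}$ (this uses convexity of $\Pc$ and affineness of the expectation maps), with $v(0) = V(0)$; (2) use the qualification condition to show $0$ lies in the interior of $\mathrm{dom}\, v$ — here is where $B_\eps(0)$ enters — so that $v$ is locally bounded above near $0$ and hence continuous at $0$, in particular $\partial v(0) \neq \emptyset$; (3) identify an element of $\partial v(0)$ with a dual optimizer: standard Fenchel–Rockafellar / Lagrangian duality gives $V(0) = v(0) = \sup_{\lambda}\{-v^*(\lambda)\}$ and $-v^*(\lambda) = \inf_{\P \in \Pc}\E^\P[\xi + \lambda\cdot\zeta]$ precisely when $\lambda \in \R^m \times \R^\ell_+$ (the sign constraint on the last $\ell$ components coming from the inequality directions), so $D(0) = \sup_{\lambda\in\R^m\times\R^\ell_+} d(\lambda)$ and the sup is attained at any $\lambda^* \in -\partial v(0)$; (4) the a priori bound: for $\lambda^* \in \partial v(0)$ (up to sign) and any $z$ with $\|z\|_\infty \le \eps$ write $v(z) \ge v(0) + \langle \lambda^*, z\rangle$; choosing $z$ adversarially among the finitely many points whose convex hull contains $B_\eps(0)$, together with the uniform bound $|v(\cdot)| \le M$ on a neighborhood (which follows from $\sup_\P|\E^\P[\xi]|\le M$ and feasibility), yields $\eps\|\lambda^*\|_1 \le v(z) - v(0) \le 2M$.

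With the abstract result in hand, part (i) is immediate for both $V$ and $V_h$. For part (ii), the stationarity and complementarity conditions are just the primal-dual optimality conditions unpacked: if $\gamma^* \in \Gamma(0)$ and $\lambda^*$ is dual optimal, then $V(0) = \E^{\P^{\gamma^*}}[\Phi] \ge \E^{\P^{\gamma^*}}[\Upsilon(\cdot,\lambda^*)] \ge \inf_\gamma \E^{\P^\gamma}[\Upsilon(\cdot,\lambda^*)] = d(\lambda^*) = D(0) = V(0)$, where the first inequality uses $\E^{\P^{\gamma^*}}[\Psi_i] = 0$ for equality constraints and $\le 0$ for inequality constraints with $\lambda^*_{m+j} \ge 0$; forcing all inequalities to be equalities gives complementarity $\E^{\P^{\gamma^*}}[\sum_i \lambda^*_i \Psi_i] = 0$ and stationarity. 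Conversely, complementarity plus stationarity plus $\gamma^* \in \Gamma(0)$ run the same chain backwards to give $\E^{\P^{\gamma^*}}[\Phi] = D(0) = V(0)$, and along the way $d(\lambda^*) = D(0)$. Part (ii') is verbatim the same with $\Gamma_h$, $\Xh$, $V_h$, $D_h$. I expect the main obstacle to be step (2)–(3) of the abstract lemma: carefully justifying that the Robinson condition \eqref{eq:qualification_conti_time} forces $0 \in \mathrm{int}(\mathrm{dom}\,v)$ and that the resulting subgradient, via the conjugate computation, really has nonnegative last $\ell$ coordinates and corresponds to the Lagrangian $\inf_\gamma \E^{\P^\gamma}[\Upsilon]$ — i.e. correctly handling the mixed equality/inequality cone $\{0_m\}\times\R^\ell_-$ and the $\mathrm{Conv}$ in the hypothesis (one does not a priori know the feasible-$z$ set is itself convex, only its convex hull is large, so some care is needed to transfer largeness to $\mathrm{dom}\,v$, which is convex).
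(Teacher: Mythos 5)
Your proposal is correct and follows essentially the same route as the paper: an abstract convex-duality lemma for the perturbed value function $z \mapsto v(z)$ over a convex family of laws (continuity at $0$ from the qualification condition, conjugate computation yielding the cone constraint $\lambda \in \R^m \times \R^\ell_+$, dual attainment at subgradients, and the bound $\|\lambda^*\|_1 \le 2M/\eps$ via the subgradient inequality at a sign-vector perturbation), combined with the martingale-problem reformulation showing that the sets of weak control laws $\Pcb_W$ and $\Pcb^h_W$ are convex, and the inequality-chain argument for the necessary and sufficient optimality conditions in (ii) and (ii'). The obstacle you flag at the end in fact dissolves immediately: once the set of laws is convex, the feasible-$z$ set $\{z : \exists \P,\ \E^\P[\zeta]+z \in \{0_m\}\x\R^\ell_-\}$ is itself convex (same argument as your step (1)), so the $\mathrm{Conv}$ in Assumption \ref{assum:qualification}(ii) is redundant and the qualification transfers directly to $\mathrm{dom}\,v$.
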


\begin{remark}
The estimation of the dual optimizer $\lambda^*$ implies that it is enough to optimize $\lambda$ in a compact set 
$ B_{2M/\eps}(0) := \{ \lambda \in \R^m \x \R^{\ell}_+ ~: \|\lambda\|_1 \le 2M/\eps \}$ for the dual problem \eqref{eq:def_D}.
\end{remark}

\begin{remark}
Let $\gamma^*$ be an optimal solution to the initial constrained control problem, then it is also a solution to the unconstrained control problem $d(\lambda^*)$ with an optimal dual optimizer $\lambda^*$.
In the Markovian context, where $\mu$, $\sigma$, $\Phi$ and $\Psi_i$ depend only on the running process $X_t$ in place of the path $X_{t \wedge \cdot}$, and under some regularity condition,
one may deduce a necessary condition on $\gamma^*$ from the stochastic Pontryagin maximum principle of Peng \cite{Peng}.
This is exactly the extended stochastic Pontryagin maximum principle for the constrained optimal control problem, see e.g.\@ Yong and Zhou \cite{YongZhou}, Bonnans and Silva \cite{BonnansSilva}, etc.				
The optimality conditions provided in Theorem \ref{thm:duality} are in line with those of Pfeiffer in \cite{pfeiffer19} in a non-linear Markovian setting.
\end{remark}

\begin{remark}
	In the discrete time framework, 
	 consider the special case where $\E[H_h(t_k, \om, a, U)] = 0$ for all $a \in A$,
	so that the process $(X^{\gamma}_{t_k})_{t_i \in \T_h}$ in \eqref{eq:X_gamma_dynamic} is a martingale.
	When the family $\{ H_h(t_k, \om, a, U) ~: a \in A \}$ is rich enough,
	one can in addition dualize the martingale optimization problem $d_h(\lambda)$ in \eqref{eq:dual_h} as the biggest convex function dominated by $\Upsilon(\cdot, \lambda)$,
	so that one can obtain a dual formulation as a single maximization problem.
	In the discrete time context, the duality as well as it computation and applications in finance has been investigated in Miller \cite{miller}.
	
	In our main result, we do not look forward to dualize the problem $d_h(\lambda)$ in \eqref{eq:dual_h} since we stay in a context without necessarily a martingale structure for processes in $\Gamma_h$, as $H_h$ could be a very general kernel function.
	The computation of $d_h(\lambda)$ will also be generally more complicated than computing the convex envelop as in \cite{miller}.
\end{remark}

We next provide an approximation result under Assumption \ref{assum:approximation}, that is,
\begin{equation*}
V_h(0) = D_h(0)  \longrightarrow V(0) = D(0).
\end{equation*}
As the dual problems are equivalent to the optimization of $d(\lambda)$ or $d_h(\lambda)$ on a compact set $B_{2M/\eps}(0)$, one can expect to have a uniform convergence rate between $d_h(\lambda)$ and $d(\lambda)$ on $B_{2M/\eps}(0)$,
which implies a convergence rate between $D_h(0)$ and $D(0)$.
We formulate the convergence rate condition as follows.

\begin{assumption} \label{assum:cvg_rate}
There are constants $C > 0$ and $\rho > 0$ such that
$$
| d(\lambda) - d_h(\lambda) | \le C h^{\rho},
~~~\mbox{for all}~~\lambda \in B_{2M/\eps}(0) \subset \R^m \x \R^{\ell}_+.
$$	
\end{assumption}

\begin{remark} \label{rem:cvg_rate}
$\mathrm{(i)}$ Consider the Markovian context,
where
\begin{equation*}
(\mu, \sigma)(t,\om,a) = (\mu_0, \sigma_0)(t, \om_t, a)
\end{equation*}
for some $(\mu_0, \sigma_0) \colon [0,T] \x \R^n \x A \to \R^n \x \S^d$
and $\Phi(\om) = \Phi_0(\om_T)$, $\Psi_i(\om) = \Psi_{0,i}(\om_T)$ for some functions $\Phi_0 \colon \R^n \to \R$ and $\Psi_{0,i} \colon \R^n \to \R$.
Assume that $\mu_0, \sigma_0, \Phi_0, \Psi_{0,i}$ are all bounded continuous, and
$(\mu_0, \sigma_0)(t,x,a)$ is Lipschitz in $x$, $\frac12$-H\"older in $t$ uniformly on $[0,T] \x \R^n \x A$,
and $\Phi_0$, $\Psi_{0,i}$ are all Lipschitz in $x$,
and further that for any $\delta > 0$, there exists a finite subset $A_{\delta} \subset A$ such that
$$
\sup_{(t,x,a) \in [0,T] \x \R^n \x A}
\inf_{a' \in A_{\delta}} 
\Big( \big\| (\mu_0, \sigma_0) (t,x,a) - (\mu_0, \sigma_0)  (t,x,a') \big\| \Big)
~\le~
\delta.
$$
Then under Assumption \ref{assum:approximation}, the condition in Assumption \ref{assum:cvg_rate} holds true for some constant $C > 0$ (independent of $\lambda \in B_{2M/\eps}(0)$) and $\rho = \frac{1}{10}$.
The above convergence rate result was first proved by Krylov's \cite{Krylov} based on a shaking coefficient argument, and then improved by Barles and Jakobsen \cite{BarlesJakobsen}. \\[1mm]
\noindent $\mathrm{(ii)}$ In the non-Markovian context, assume that the sets
$\{\mu(t, \om, a) ~: a \in A\}$ and $\{\sigma \sigma^{\top}(t, \om, a) ~: a \in A \}$ are uniformly bounded and independent of $\om \in \Om$, and $\Phi$ and $\Psi_i$ are Lipschitz,
then under Assumption \ref{assum:approximation}, the condition in Assumption \ref{assum:cvg_rate} holds true for some constant $C > 0$ and $\rho = \frac{1}{8}$.
The above convergence rate result is obtained by Dolinsky \cite{Dolinsky} (see also Tan \cite{Tan}) using a strong invariance principle argument. \\[1mm]
\noindent $\mathrm{(iii)}$
In the non-Markovian context, assume that the volatility function $\sigma$ is not controlled so that the optimal control problem in $d(\lambda)$ can be reformulated as a BSDE (backward stochastic differential equation). Convergence results for numerical schemes of BSDE lead to a rate $\rho = \frac12$, see e.g.\@ Bouchard and Touzi \cite{BouchardTouzi}, Zhang \cite{Zhang}, etc.	
\end{remark}

\begin{theorem} \label{thm:cvg}
Let Assumptions \ref{assum:qualification} and \ref{assum:approximation} hold true.
Then
$$
D_h(0) = V_h(0) \longrightarrow V(0) = D(0),
~~~\mbox{as}~~
h \longrightarrow 0.
$$
Suppose in addition that Assumption \ref{assum:cvg_rate} holds true with some constants $C > 0$ and $\rho > 0$,
then
\be \label{eq:cvg_rate}
|V_h(0) - V(0)| = |D_h(0) - D(0)| ~\le~ C h^{\rho}.
\ee
\end{theorem}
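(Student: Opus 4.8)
The plan is to use the no-duality-gap identities $V(0)=D(0)$ and $V_h(0)=D_h(0)$ of Theorem \ref{thm:duality}(i) to reduce the whole statement to proving $D_h(0)\to D(0)$ (and $|D_h(0)-D(0)|\le Ch^\rho$ under Assumption \ref{assum:cvg_rate}). The key reduction is that, by the a priori bound $\|\lambda^*\|_1\le 2M/\eps$ in Theorem \ref{thm:duality}(i), the dual suprema are attained on a fixed compact set:
$$D(0)=\sup_{\lambda\in B_{2M/\eps}(0)}d(\lambda),\qquad D_h(0)=\sup_{\lambda\in B_{2M/\eps}(0)}d_h(\lambda),$$
so that $|D(0)-D_h(0)|\le\sup_{\lambda\in B_{2M/\eps}(0)}|d(\lambda)-d_h(\lambda)|$. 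Moreover $\lambda\mapsto d(\lambda)$ and $\lambda\mapsto d_h(\lambda)$ are concave (infima of affine maps), and, using the uniform second-moment bounds on $X^\gamma$ and $\widehat X^{\gamma_h}$ provided by Assumption \ref{assum:approximation} together with the quadratic growth $|\Psi_i(\om)|\le K(1+\|\om\|^2)$, they are Lipschitz on $B_{2M/\eps}(0)$ with a constant independent of $h$. Consequently, once we prove $d_h(\lambda)\to d(\lambda)$ for each fixed $\lambda$, equi-Lipschitz continuity on the compact set upgrades this to uniform convergence, which is exactly what is needed.

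It remains to prove, for fixed $\lambda\in B_{2M/\eps}(0)$, that $d_h(\lambda)\to d(\lambda)$; this is a convergence statement for the \emph{unconstrained} control problems, to which the weak-convergence (Markov chain approximation) method applies. For $\limsup_{h\to0}d_h(\lambda)\le d(\lambda)$: take $\gamma\in\Gamma$ that is $\delta$-optimal for $d(\lambda)$, approximate its control $\alpha^\gamma$ by one that is piecewise constant on the grid $\T_h$ and depends on finitely many observations, feed it into the scheme \eqref{eq:X_gamma_dynamic} to obtain $\gamma_h\in\Gamma_h$, and use the moment-matching and continuity conditions of Assumption \ref{assum:approximation} to conclude that $\widehat X^{\gamma_h}\to X^\gamma$ in law in the $2$-Wasserstein sense; continuity of $\Phi,\Psi_i$ with quadratic growth and uniform integrability of $\|\widehat X^{\gamma_h}\|^2$ then give $\E^{\P^{\gamma_h}}[\Upsilon(\widehat X^{\gamma_h}_\cdot,\lambda)]\to\E^{\P^\gamma}[\Upsilon(X^\gamma_\cdot,\lambda)]$, so $\limsup_h d_h(\lambda)\le\E^{\P^\gamma}[\Upsilon(X^\gamma_\cdot,\lambda)]$, and $\delta\downarrow0$ finishes.

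For the reverse inequality $\liminf_{h\to0}d_h(\lambda)\ge d(\lambda)$: take $\gamma_h\in\Gamma_h$ that is $\delta$-optimal for $d_h(\lambda)$. The uniform moment estimates (Gronwall together with the third-moment control $\E|H_h|^3\le Ch^{3/2}$, which also yields a Kolmogorov-type tightness criterion for the interpolated paths) make the laws of $\widehat X^{\gamma_h}$ tight, while the relaxed-control occupation measures $\frac1T\int_0^T\delta_{\alpha^{\gamma_h}_t}\,dt$ are automatically tight since $A$ is compact. Passing to a subsequential joint limit and using the moment-matching of Assumption \ref{assum:approximation}(ii), the limit of $\widehat X^{\gamma_h}$ solves the martingale problem associated with $(\mu,\sigma)$ driven by a relaxed control, which (by a chattering/convexity argument — the set of weak control laws is convex, cf.\ the remark after Definition \ref{def:weak_ctrl}) can be realised as the law of $X^\gamma$ for some genuine $\gamma\in\Gamma$; hence $d(\lambda)\le\E^{\P^\gamma}[\Upsilon(X^\gamma_\cdot,\lambda)]=\lim_k\E^{\P^{\gamma_{h_k}}}[\Upsilon(\widehat X^{\gamma_{h_k}}_\cdot,\lambda)]\le\liminf_h d_h(\lambda)+\delta$, and $\delta\downarrow0$ concludes. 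Combining the two bounds gives $d_h(\lambda)\to d(\lambda)$, hence $D_h(0)\to D(0)$ and $V_h(0)\to V(0)$ by the first paragraph.

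Finally, the rate: under Assumption \ref{assum:cvg_rate} one has $\sup_{\lambda\in B_{2M/\eps}(0)}|d(\lambda)-d_h(\lambda)|\le Ch^\rho$, hence $|D_h(0)-D(0)|=\big|\sup_\lambda d_h(\lambda)-\sup_\lambda d(\lambda)\big|\le Ch^\rho$, and the duality identities of Theorem \ref{thm:duality} turn this into $|V_h(0)-V(0)|\le Ch^\rho$, i.e.\ \eqref{eq:cvg_rate}. The genuinely delicate point of the argument is the limit-identification step in the $\liminf$ inequality (passing from a subsequential weak limit of discretized controlled processes, with relaxed controls, back to an admissible weak control term $\gamma\in\Gamma$), together with the symmetric discrete approximation of a given weak control used in the $\limsup$ inequality; everything else is bookkeeping with moment bounds and the concavity/equi-Lipschitz reduction above.
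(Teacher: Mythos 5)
Your route is correct in outline but genuinely different from the paper's for the qualitative convergence. You push everything to the dual side: using the bound $\|\lambda^*\|_1,\|\lambda_h^*\|_1\le 2M/\eps$ from Theorem \ref{thm:duality}, you write $|D_h(0)-D(0)|\le\sup_{\lambda\in B_{2M/\eps}(0)}|d_h(\lambda)-d(\lambda)|$ and reduce the theorem to convergence of the \emph{unconstrained} Lagrangian problems, upgraded from pointwise to uniform by equi-Lipschitzness in $\lambda$. The paper instead proves the qualitative statement on the primal side: Step 1 identifies $V(0)$ with the relaxed value $V_R(0)$ using the $\Wc_2$-density of weak rules in $\Pcb_R$ and the Lipschitz continuity of the value functions in the constraint level $z$ (Lemma \ref{lemm:LipschitzV}, Lemma \ref{lemm:Vb_Lip}); Step 2 gets $\liminf_h V_h(0)\ge V(0)$ by $\Wc_2$-tightness and martingale-problem identification of limits inside $\Pcb_R(0)$ (Lemma \ref{eq:limit_martingale_pb}); Step 3 gets $\limsup_h V_h(0)\le V(0)$ via Lemma \ref{lemm:Discret_Approx_Relax}, whose discrete approximants only meet the constraints up to a small $z_n$, an error absorbed by the uniform-in-$h$ Lipschitz constant $4M/\eps$ of $V_h(\cdot)$; duality is then only used to transfer the statement to $D_h$, $D$ and for the rate, exactly as in your last paragraph. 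Your dual route buys you freedom from the constraint-perturbation issue (no Lipschitz-in-$z$ lemma needed, only the multiplier bound coming from the qualification condition), at the price of redoing the Kushner--Dupuis analysis for each penalized problem; but the two technical ingredients you need (tightness plus martingale-problem identification, and discretization of a near-optimal control through piecewise-constant strong rules) are precisely the paper's Lemmas \ref{eq:limit_martingale_pb} and \ref{lemm:Discret_Approx_Relax} stripped of the constraint bookkeeping, so the overall workload is comparable. The paper's primal route additionally yields the by-products $V(0)=V_R(0)$ and the machinery behind Proposition \ref{prop:VS_V}.

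Two points in your sketch need repair. First, in the $\liminf$ step for $d_h(\lambda)$ you claim the subsequential limit ``can be realised as the law of $X^\gamma$ for some genuine $\gamma\in\Gamma$''; that is not true in general --- the limit is only a relaxed control rule in $\Pcb_R$, not necessarily a weak control law. What you actually need is that $d(\lambda)$ coincides with the infimum over relaxed rules, which follows from the $\Wc_2$-density of piecewise-constant strong rules in $\Pcb_R$ (Lemma \ref{lemm:PS_to_PR}\,(ii)) together with the continuity and quadratic growth of $\Phi,\Psi$; the convexity of the set of weak control laws, which you invoke in the parenthesis, is not the relevant tool here. Second, the equi-Lipschitzness of $d$ and $d_h$ in $\lambda$ requires second-moment bounds on $\|X^\gamma\|$ and $\|\widehat X^{\gamma_h}\|$ that are uniform over all controls and over $h$; these are not directly stated in Assumption \ref{assum:approximation} but must be derived by the Gronwall-type estimates (using compactness of $A$, the growth condition \eqref{eq:growth_condition}, and the moment conditions on $H_h$) that the paper carries out inside Lemma \ref{eq:limit_martingale_pb}, so your argument should include them explicitly.
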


\begin{remark}\label{rem:discussion_numer_method}
	As we will show in Section \ref{subsection:numerics}, the discrete time problem $V_h(0)$ (or $D_h(0)$) can serve as a numerical scheme for the continuous time problem $V(0)$.
	Although it is very natural to approximate a continuous time control problem under constraints by discretization methods, there are few results in the literature dealing with the convergence of such methods.

%	\vspace{0.5em}
	
	For the optimal control problem under state constraints as formulated in Example \ref{example:state_constraints},
	Bokanowski, Picarelli and Zidani \cite{BPZ} use the so-called \emph{level-set approach} to reformulate it into an optimization problem over a family of (unconstrained) singular control problems,
	and then to characterize the value function of the singular control problem as unique viscosity solution to a HJB equation.
	Then using Barles and Souganidis' monotone scheme technique \cite{BS}, they prove the convergence of their numerical scheme to the solution of the HJB equation.

	In the context of an optimal transport problem (or equivalently an optimal control problem under distribution constraints), Tan and Touzi \cite{TanTouzi} provide a numerical algorithm based on the dual formulation together with a finite difference scheme.
	 After a truncation on the dual Lagrangian multiplier, they are able to obtain a convergence result (without convergence rate) of the discrete time algorithm.
	 The first general convergence result in Theorem \ref{thm:cvg} is based on the weak convergence technique, which allows to avoid the truncation of the dual multiplier, and technical convex conditions on the cost function as in \cite{TanTouzi}.

In \cite{pfeiffer19}, some more general non-linear constraints are considered, formulated on the probability distribution of the final state. In this context, there is no duality result which can be exploited numerically. Yet penalty approaches (e.g.\@ augmented Lagrangian algorithms) can be employed to find an optimal control satisfying certain optimality conditions. In the general context of \cite{pfeiffer19}, the convergence to a global solution is an open issue and there is no available discretization result, to our knowledge.
\end{remark}

\subsection{Further discussions} 
\label{subsec:main_result_discuss}

\paragraph{A strong formulation of the optimal control problem under constraints}
By fixing the probability space and the Brownian motion in the weak control term $\gamma$, one can obtain a strong formulation of the control problem.
This can be achieved by just fixing the filtration $\F^{\gamma}$ as the Brownian filtration, as all other processes are assumed to be adapted to it.

\begin{definition} \label{def:strong_ctrl}
$\mathrm{(i)}$ A weak control term 
\begin{equation*}		
\gamma = (\Om^{\gamma}, \Fc^{\gamma}, \F^{\gamma}=(\Fc^{\gamma}_t)_{t \in [0,T]}, \P^{\gamma}, X^{\gamma}, \alpha^{\gamma}, B^{\gamma})
\end{equation*}		
is called a strong control term if $\F^{\gamma}$ is the (augmented) filtration generated by $B^{\gamma}$. \\[1mm]
\noindent $\mathrm{(ii)}$ A strong control term $\gamma$ is called piecewise constant if  $\alpha^{\gamma}$ is piecewise constant, i.e.\@ $\alpha_{s} = \alpha_{t_i}$ for $s \in [t_i, t_{i+1})$ with some discret time grid $0 =t_0 < \cdots < t_N = T$.
\end{definition}

Let us denote 
$$
\Gamma_S := \big\{ \mbox{All strong control terms} \big\},
~~~~
\Gamma_{S,0} := \big\{ \mbox{All piecewise strong control terms} \big\},
$$
and
$$
\Gamma_S(z) ~:=~ \Gamma_S \cap \Gamma(z), ~~\mbox{for}~z \in \R^m \x \R^{\ell}. 
$$
We can then introduce a strong formulation of the optimal control problem under constraints:
$$
V_S(z) ~:=~ \inf_{\gamma \in \Gamma_S(z)} \E^{\P^{\gamma}} \big[ \Phi \big( X^{\gamma}_{\cdot} \big) \big].
$$
As we will see in the technical proof part, one can approximate a weak control by strong control terms.
However, the equality constraints $\E^{\P^{\gamma}}[ \Psi_i(X^{\gamma}_{\cdot})] = z_i$ may not be ensured in the approximation, and for this reason, we are not able to prove that $V_S(z) = V(z)$.
In other words, with the presence of the equality constraints, $V_S(z)$ may lack some regularity in $z$.
Nevertheless, we are able to prove that by considering the convex envelop or lower-semicontinuous envelop of $V_S$, we then obtain $V$.
Let us denote by $V_S^{l.s.c.}$ the biggest lower-semicontinuous function dominated by $V_S$,
and by $V_S^{conv}$ the biggest convex function dominated by $V_S$.

\begin{proposition} \label{prop:VS_V}
Let Assumptions \ref{assum:qualification} and \ref{assum:approximation}  hold true. 
Then,
$$
V(z) 
~=~
V^{conv}_S(z) 
~=~
V^{l.s.c.}_S(z),
~~\mbox{for all}~z \in B_{\eps/2}(0),
$$
where $B_{\eps/2}(0) := \{z \in \R^{m+\ell} ~: \|z\|_1 \le \eps/2\}$,
\end{proposition}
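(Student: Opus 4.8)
The plan is to establish the two equalities $V = V_S^{\mathrm{conv}}$ and $V = V_S^{\mathrm{l.s.c.}}$ on $B_{\eps/2}(0)$ by a sandwich argument, using that $\Gamma_S(z) \subset \Gamma(z)$ trivially gives $V(z) \le V_S(z)$, hence $V \le V_S^{\mathrm{l.s.c.}}$ and (since $V$ is convex, see below) $V \le V_S^{\mathrm{conv}}$, so the real work is the reverse inequalities $V_S^{\mathrm{conv}} \le V$ and $V_S^{\mathrm{l.s.c.}} \le V$. First I would record that $V$ is convex on its domain: this follows from the convexity of the set of weak control rules (mentioned in the Remark after Definition \ref{def:weak_ctrl}) together with the linearity of $z \mapsto \Gamma(z)$-constraints, so that a convex combination of two weak controls satisfying the constraints at $z^1$ and $z^2$ satisfies the constraints at the corresponding convex combination of $z^1, z^2$. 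Also $V$ is finite on $B_{\eps}(0)$ by the qualification condition \eqref{eq:qualification_conti_time} and the uniform bound $M$ in Assumption \ref{assum:qualification}(i), so on $B_{\eps/2}(0)$ it is a finite convex function and hence continuous there; this regularity will let me conclude $V_S^{\mathrm{conv}} = V_S^{\mathrm{l.s.c.}}$ once one of the inequalities is in place, and in particular it suffices to prove $V_S^{\mathrm{conv}}(z) \le V(z)$ for $z \in B_{\eps/2}(0)$, which automatically gives $V_S^{\mathrm{l.s.c.}} \le V_S^{\mathrm{conv}} \le V \le V_S^{\mathrm{l.s.c.}}$.

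Next I would set up the approximation of a given weak control term by strong (in fact piecewise-constant strong) control terms. Fix $z \in B_{\eps/2}(0)$ and $\gamma \in \Gamma(z)$ nearly optimal for $V(z)$. Using the standard weak-to-strong approximation for controlled diffusions — approximating the predictable control process by piecewise-constant ones on a refining grid and invoking the continuity/growth hypotheses \eqref{eq:growth_condition}, \eqref{eq:Lip} of Assumption \ref{assum:approximation}, together with the continuity and quadratic growth of $\Phi, \Psi_i$ — one produces strong control terms $\gamma^k \in \Gamma_S$ with $\E^{\P^{\gamma^k}}[\Phi(X^{\gamma^k}_\cdot)] \to \E^{\P^{\gamma}}[\Phi(X^{\gamma}_\cdot)]$ and $\E^{\P^{\gamma^k}}[\Psi(X^{\gamma^k}_\cdot)] \to \E^{\P^{\gamma}}[\Psi(X^{\gamma}_\cdot)] = z$, but with the constraint values perturbed: $z^k := \E^{\P^{\gamma^k}}[\Psi(X^{\gamma^k}_\cdot)]$ need not lie in $\{0_m\}\times\R^\ell_-$-shifted position corresponding to an element of $\Gamma_S(z)$. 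Thus $\gamma^k \in \Gamma_S(z^k)$ with $z^k \to z$, giving $V_S(z^k) \le \E^{\P^{\gamma^k}}[\Phi(X^{\gamma^k}_\cdot)]$, i.e. $\limsup_k V_S(z^k) \le V(z)$ along a sequence $z^k \to z$ — but this only controls $V_S^{\mathrm{l.s.c.}}(z)$ if one already knows $z^k$ approaches $z$ in a way compatible with lower semicontinuity, which it does: $\liminf_{z'\to z} V_S(z') \le \limsup_k V_S(z^k) \le V(z)$, hence $V_S^{\mathrm{l.s.c.}}(z) \le V(z)$. To upgrade to the convex envelope, I would combine this with the qualification condition: since $B_{\eps}(0)$ is covered by convex combinations of constraint-feasible shifts, any $z \in B_{\eps/2}(0)$ can be written as a convex combination of finitely many points $z^{(j)}$ at which $V_S(z^{(j)})$ is controlled (via feasible weak controls that are then strong-approximated), and convexity of $V$ plus the envelope definition yields $V_S^{\mathrm{conv}}(z) \le \sum_j t_j V_S^{\mathrm{l.s.c.}}(z^{(j)}) \le \sum_j t_j V(z^{(j)})$; choosing the decomposition so that $\sum_j t_j V(z^{(j)})$ is close to $V(z)$ (possible by continuity of $V$) finishes it.

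The main obstacle I anticipate is exactly the handling of the equality constraints under the strong approximation: the weak-to-strong passage perturbs $\E[\Psi_i(X^\gamma_\cdot)]$, so one genuinely cannot stay inside $\Gamma_S(z)$, and the whole point of passing to the convex/l.s.c. envelope is to absorb this perturbation. The delicate step is therefore to show that the perturbed feasible points $z^k$ (and, in the convex-combination argument, the auxiliary points $z^{(j)}$) can be taken inside $B_{\eps}(0)$ where $V$ is finite and the qualification estimate applies, and that the perturbations $z^k - z$ can be made arbitrarily small while keeping $\E^{\P^{\gamma^k}}[\Phi(X^{\gamma^k}_\cdot)]$ close to $\E^{\P^{\gamma}}[\Phi(X^{\gamma}_\cdot)]$ — this requires the quadratic-growth bounds on $\Phi, \Psi_i$ together with uniform second-moment estimates on $X^{\gamma^k}$ (from \eqref{eq:growth_condition} and Gronwall), which are standard but must be assembled carefully. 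A secondary technical point is justifying that piecewise-constant strong controls suffice in the approximation, i.e. that $\Gamma_{S,0}$ is dense in the relevant sense; this is classical (chattering lemma / mimicking arguments) under the continuity hypotheses of Assumption \ref{assum:approximation}, and I would cite it rather than reprove it.
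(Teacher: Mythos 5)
Your first two paragraphs are essentially the paper's own route and are fine: $V\le V_S$ plus convexity and finiteness of $V$ near $0$ (hence continuity on $B_{\eps/2}(0)$) give $V\le V^{l.s.c.}_S$ and $V\le V^{conv}_S$, and the substantial inequality $V^{l.s.c.}_S\le V$ is obtained exactly as in the paper, by approximating a near-optimal weak control by (piecewise constant) strong controls with perturbed constraint values $z^k\to z$, using the density of $\Pcb_{S,0}$ in $\Pcb_R$ under $\Wc_2$ (Lemma \ref{lemm:PS_to_PR}, and the arguments of Lemma \ref{lemm:Discret_Approx_Relax}) together with the quadratic growth of $\Phi,\Psi_i$.

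The gap is in the convex-envelope direction $V^{conv}_S(z)\le V(z)$. First, your reduction ``it suffices to prove $V^{conv}_S\le V$, which automatically gives $V^{l.s.c.}_S\le V^{conv}_S$'' is unfounded: there is no general inequality between the two envelopes (for a convex but non-l.s.c.\ function one has $V^{l.s.c.}_S<V^{conv}_S=V_S$ at points of non-lower-semicontinuity, while for a concave function the convex envelope lies below the l.s.c.\ envelope), so this step proves nothing — it is only harmless because you prove $V^{l.s.c.}_S\le V$ directly anyway. Second, your actual argument for $V^{conv}_S\le V$ rests on the inequality $V^{conv}_S(z)\le\sum_j t_j V^{l.s.c.}_S(z^{(j)})$ for a convex decomposition $z=\sum_j t_j z^{(j)}$; the definition of the convex envelope only yields $V^{conv}_S(z)\le\sum_j t_j V_S(z^{(j)})$, with $V_S$ and not its l.s.c.\ envelope, and replacing $V_S$ by the smaller $V^{l.s.c.}_S$ is precisely what would need proof — while bounding $V_S(z^{(j)})$ by $V(z^{(j)})$ goes the wrong way, since $V_S\ge V$. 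The qualification condition cannot repair this: it gives feasibility (finiteness of $V$ on a ball), not smallness of $V_S$ at prescribed points. The paper closes this direction differently: having shown $V=V^{l.s.c.}_S$ on $B_{\eps/2}(0)$ with $V$ convex and Lipschitz there, it identifies $V$ with the convex envelope of $V_S$ restricted to the ball; equivalently, any convex minorant of $V_S$ (in particular $V^{conv}_S$) is finite — the strong-approximation step provides points near every $z$ where $V_S$ is bounded above — hence continuous on the interior of the ball, and a continuous minorant of $V_S$ is dominated by $V^{l.s.c.}_S=V$. This yields $V^{conv}_S\le V$ on $B_{\eps/2}(0)$ and completes the sandwich; as written, your proposal does not.
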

The proof will be completed in Section \ref{subsec:proof_VS_V}.

\paragraph{On Assumption \ref{assum:qualification}} 
Item $\mathrm{(i)}$ in Assumption \ref{assum:qualification} is an integrability condition, which can be reasonably checked for concrete examples.
The qualification condition in Assumption  \ref{assum:qualification} $\mathrm{(ii)}$ turns to be more abstract.
We provide below a more explicit equivalent formulation, which could be easier to check (at least numerically).
Let
$$
E := \{-1, 1\}^m \x \{1\}^{\ell},
~~~\mbox{and}~~
\Theta^+_1 := \Big \{ (\theta_1, \cdots, \theta_{m+\ell}) \in \R_+^{m+\ell} ~: \sum_{i=1}^{m+\ell} \theta_i = 1 \Big\},
$$ 
which is a convex and compact subset of $\R_+^{m+\ell}$.
Let $e \in E$, we denote
$$
e \Psi(\cdot) := \big(e_1 \Psi_1(\cdot), \cdots, e_{m+\ell} \Psi_{m + \ell}(\cdot) \big).
$$

\begin{proposition}
Assumption \ref{assum:qualification} $\mathrm{(ii)}$ holds if and only if,
for each $e \in E$,
\be \label{eq:qualification_conti_time_equiv}
\max_{\theta \in \Theta^+_1} \inf_{\gamma \in \Gamma} \E\big[ \theta \cdot e\Psi(X^{\gamma}_{\cdot}) \big]
=
\inf_{\gamma \in \Gamma} \max_{\theta \in \Theta^+_1}  \E\big[ \theta \cdot e \Psi(X^{\gamma}_{\cdot}) \big]
\le
-\eps,
\ee
and
\be \label{eq:qualification_discrete_time_equiv}
\max_{\theta \in \Theta^+_1} \inf_{\gamma_h \in \Gamma_h} \E\big[ \theta \cdot e \Psi(\Xh^{\gamma_h}_{\cdot}) \big]
=
\inf_{\gamma_h \in \Gamma_h} \max_{\theta \in \Theta^+_1}  \E\big[ \theta \cdot e \Psi(\Xh^{\gamma_h}_{\cdot}) \big]
\le
-\eps.
\ee
\end{proposition}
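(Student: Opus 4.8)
The plan is to dualize the qualification condition \eqref{eq:qualification_conti_time}. Recalling the Remark after Definition~\ref{def:weak_ctrl}, a weak control term may be identified, through the associated martingale problem, with a probability measure on the canonical space, and the set of such measures is \emph{convex}; since $\P^\gamma\mapsto\E^{\P^\gamma}[\Psi_i(X^\gamma_\cdot)]$ is then affine (and finite, by Assumption~\ref{assum:qualification}(i)), the set
\[
\mathcal{C}\ :=\ \bigl\{\E^{\P^\gamma}[\Psi(X^\gamma_\cdot)]\ :\ \gamma\in\Gamma\bigr\}\ \subset\ \R^{m+\ell}
\]
is convex. The set on the right-hand side of \eqref{eq:qualification_conti_time} is then exactly $\mathcal{R}:=(\{0_m\}\x\R^\ell_-)-\mathcal{C}$, which is convex, so the convex hull in \eqref{eq:qualification_conti_time} is redundant and the condition reads $B_\eps(0)\subset\mathcal{R}$. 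The identical reduction applies to \eqref{eq:qualification_discrete_time} with $\mathcal{C}_h:=\{\E^{\P^{\gamma_h}}[\Psi(\Xh^{\gamma_h}_\cdot)]:\gamma_h\in\Gamma_h\}$ and $\mathcal{R}_h$; I will only spell out the continuous-time case, the discrete one being word for word the same.

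Second, I would dispose of the minimax identity in \eqref{eq:qualification_conti_time_equiv}. Maximizing the affine map $\theta\mapsto\E[\theta\cdot e\Psi(X^\gamma_\cdot)]$ over the simplex $\Theta^+_1$ gives $\max_{\theta\in\Theta^+_1}\E[\theta\cdot e\Psi(X^\gamma_\cdot)]=\max_{1\le i\le m+\ell}e_i\,\E[\Psi_i(X^\gamma_\cdot)]$, so $\inf_\gamma\max_\theta$ is just $\inf_\gamma\max_i e_i\E[\Psi_i(X^\gamma_\cdot)]$. For the reverse order, $(\P,\theta)\mapsto\E^\P[\theta\cdot e\Psi(X_\cdot)]$ is affine in the measure $\P$ over the convex set of weak-control measures and affine, hence concave and continuous, in $\theta$ over the compact convex set $\Theta^+_1$; a minimax theorem for convex--concave functions with one compact factor (Sion's theorem, or the Kneser--Fan version, which needs no topology on the non-compact side) yields $\max_\theta\inf_\gamma=\inf_\gamma\max_\theta$. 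Hence \eqref{eq:qualification_conti_time_equiv} is equivalent to: for every $e\in E$, $\inf_{\gamma\in\Gamma}\E[\theta\cdot e\Psi(X^\gamma_\cdot)]\le-\eps$ for all $\theta\in\Theta^+_1$.

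It remains to show that $B_\eps(0)\subset\mathcal{R}$ is equivalent to this last family of inequalities, which I would do by a support-function computation. For the direction ``$B_\eps(0)\subset\mathcal{R}\Rightarrow$'' it is cleanest to note that $\mathcal{R}$ is non-increasing in its last $\ell$ coordinates (by construction), so $B_\eps(0)\subset\mathcal{R}$ forces the vertex $\eps e$ into $\mathcal{R}$ for each $e\in E$ (the remaining vertices of the cube $B_\eps(0)$ then follow by monotonicity); but $\eps e\in\mathcal{R}$ means there is $\gamma$ with $\E[\Psi_i(X^\gamma_\cdot)]=-\eps e_i$ for $i\le m$ and $\E[\Psi_{m+j}(X^\gamma_\cdot)]\le-\eps$ for $j\le\ell$, whence $\max_i e_i\E[\Psi_i(X^\gamma_\cdot)]\le-\eps$ and, by weak duality, $\max_\theta\inf_\gamma\le\inf_\gamma\max_\theta\le-\eps$. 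For the converse, one computes $\sigma_{\mathcal{R}}(p):=\sup_{z\in\mathcal{R}}\langle p,z\rangle$, which is $+\infty$ unless $p_{m+j}\ge0$ for every $j\le\ell$, and otherwise equals $-\inf_{c\in\mathcal{C}}\langle p,c\rangle=-\inf_\gamma\E\bigl[\langle p,\Psi(X^\gamma_\cdot)\rangle\bigr]$; writing a nonzero $p$ with $p_{m+j}\ge0$ as $p_i=\|p\|_1\,e_i\theta_i$ with $e_i=\mathrm{sign}(p_i)$ (so $e\in E$) and $\theta_i=|p_i|/\|p\|_1\in\Theta^+_1$, the assumed inequalities say precisely $\sigma_{\mathcal{R}}(p)\ge\eps\|p\|_1=\sigma_{B_\eps(0)}(p)$ for all $p$, hence $B_\eps(0)\subset\overline{\mathcal{R}}$. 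Since $\mathcal{R}$ is convex and its closure then has nonempty interior, $\mathrm{int}(\overline{\mathcal{R}})=\mathrm{int}(\mathcal{R})$, so in fact $\{z:\|z\|_\infty<\eps\}\subset\mathcal{R}$ — i.e.\@ \eqref{eq:qualification_conti_time} holds (up to the boundary sphere, equivalently with $\eps$ replaced by any strictly smaller constant, which is immaterial for a qualification condition).

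I expect the genuine work to be twofold. First, making the minimax interchange rigorous: this means carefully invoking the martingale-problem identification of weak controls with a convex set of measures and checking the hypotheses of the chosen minimax theorem (affineness in $\P$, compactness and continuity in $\theta$). Second, the closure step in the converse direction — recovering $B_\eps(0)\subset\mathcal{R}$, rather than merely $B_\eps(0)\subset\overline{\mathcal{R}}$, from the support-function inequalities. Everything else (the reduction to $\mathcal{C}$, evaluating $\max_\theta$ over the simplex, the monotonicity of $\mathcal{R}$ in the inequality coordinates that pins the index set to $E=\{-1,1\}^m\x\{1\}^\ell$ rather than all of $\{-1,1\}^{m+\ell}$, and the support-function formula) is routine, and the discrete-time statement \eqref{eq:qualification_discrete_time_equiv} is obtained by repeating the argument with $\Gamma_h$, $\mathcal{C}_h$, $\mathcal{R}_h$ in place of $\Gamma$, $\mathcal{C}$, $\mathcal{R}$.
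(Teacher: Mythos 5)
Your proposal is correct and follows essentially the same route as the paper: both rest on the convexity of the set of weak (resp.\@ discrete-time weak) control laws from the martingale-problem identification, the identity $\max_{\theta\in\Theta^+_1}\theta\cdot c=\max_i c_i$, and a classical minimax theorem on the convex set of control measures against the compact simplex. The only divergence is that you replace the paper's ``easy to see'' equivalence between the ball-inclusion condition and the per-direction inequalities by an explicit support-function computation; the resulting closure caveat (recovering only the open ball, i.e.\@ the qualification condition for every smaller $\eps$) is harmless for the statement's purpose and in fact mirrors a slack already implicit in the paper's own step passing from ``$\inf_\gamma\le-\eps$'' back to the existence of a single control, where the infimum need not be attained.
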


\begin{remark}
Notice that the optimization problems \eqref{eq:qualification_conti_time_equiv} and \eqref{eq:qualification_discrete_time_equiv} can be computed by the same numerical algorithm that will be described in Section \ref{subsection:numerics}.
\end{remark}

\begin{proof}
We will only provide the equivalence between \eqref{eq:qualification_conti_time} and  \eqref{eq:qualification_conti_time_equiv}. The equivalence between  \eqref{eq:qualification_discrete_time} and  \eqref{eq:qualification_discrete_time_equiv} follows by exactly the same arguments.

First, it is easy to see that the condition \eqref{eq:qualification_conti_time} is equivalent to that for all
$e \in E$, there is some control $\gamma \in \Gamma$ such that
$$
 \E\big[ e_i \Psi_i(X^{\gamma}_{\cdot}) \big] \le - \eps,
 ~~
 \mbox{for all}~
 i=1, \cdots, m + \ell.
$$
Next, we notice that
$$
\max_{i=1, \cdots, m+\ell} c_i ~=~ \max_{\theta \in \Theta^+_1} \big( \theta \cdot c \big),
~~\mbox{for all}~~
c = (c_1, \cdots, c_{m+\ell}) \in \R^{m+\ell},
$$
then \eqref{eq:qualification_conti_time} is equivalent to
$$
\inf_{\gamma \in \Gamma} \max_{\theta \in \Theta^+_1}  \E\big[ \theta \cdot e \Psi(X^{\gamma}_{\cdot}) \big]
\le -\eps,
~~\mbox{for each}~e \in E.
$$
To conclude, it is enough to prove the duality result in \eqref{eq:qualification_conti_time_equiv}.
Let us denote by $\Pc := \{ \P^{\gamma} \circ (X^{\gamma})^{-1} ~: \gamma \in \Gamma \}$,
which is a convex set (see Lemma \ref{lemm:PS_to_PR}).
Notice that $\Theta^+_1$ is a convex compact subset of $\R^{m + \ell}$, then it follows by the classical minimax theorem that
\begin{eqnarray*}
\max_{\theta \in \Theta^+_1} \inf_{\gamma \in \Gamma} \E\big[ \theta \cdot e\Psi(X^{\gamma}_{\cdot}) \big]
&=&
\max_{\theta \in \Theta^+_1} \inf_{\P \in \Pc} \E^{\P}\big[ \theta \cdot e\Psi(X_{\cdot}) \big] \\
&=&
\inf_{\P \in \Pc} \max_{\theta \in \Theta^+_1}  \E^{\P} \big[ \theta \cdot e \Psi(X_{\cdot}) \big]
=
\inf_{\gamma \in \Gamma} \max_{\theta \in \Theta^+_1}  \E\big[ \theta \cdot e \Psi(X^{\gamma}_{\cdot}) \big].
\end{eqnarray*}
\end{proof}

\section{Numerical examples}
\label{sec:Numerical}

\subsection{Numerical algorithm to compute $D_h(0)$} \label{subsection:numerics}

It is not hard to check that $\lambda_h \mapsto d_h(\lambda_h)$ is a concave function, therefore it is natural to compute 
\begin{equation} \label{eq:Dh0_M}
D_h(0) = \max_{\lambda_h \in B_{2M/\eps}(0)} d_h(\lambda_h)
\end{equation}
by the following gradient algorithm,
which is an iterative method (w.r.t.\@ $k$).
Let $(\theta_k)_{k \ge 0}$ be a sequence of positive real numbers.

\begin{itemize}
\item Initialize the problem with $\lambda_0 = 0$.
\item Given $\lambda_k$, solve the problem $d_h(\lambda_k)$ by finding $\hat \gamma_k$ such that
$$
\E^{\mathbb{P}^{\hat{\gamma}_k}} \big[ \Phi\big( \widehat X^{\hat \gamma_k}_{\cdot} \big) + \lambda \cdot \Psi\big( \widehat X^{\hat \gamma_k}_{\cdot} \big) \big]
= d_h(\lambda_k) 
= \inf_{\gamma_h \in \Gamma_h} \E^{\mathbb{P}^{\gamma_h}} \big[ \Phi(\Xh^{ \gamma_h}_{\cdot}) + \lambda^*_h \cdot \Psi(\Xh^{\gamma_h}_{\cdot}) \big].
$$
\item With $(\lambda_k, \hat \gamma_k)$, compute $\lambda_{k+1}$ by
\begin{equation} \label{eq:update_lambda}
\lambda_{k+1} :=  \Pi \big( \lambda_k + \theta_k \nabla d_h(\lambda_k) \big)
~~~\mbox{with}~~
\nabla d_h(\lambda_k) 
:=
\E^{\mathbb{P}^{\hat{\gamma}_k}} \big[ \Psi\big( \widehat X^{ \hat \gamma_k}_{\cdot} \big) \big],
\end{equation}
where $\Pi$ denote the projection operator from $\R^{m+\ell}$ on $B_{2M/\eps}(0) \cap (\R^m \times \R^{\ell}_+)$.
\end{itemize}

Here we commit a slight abuse of notation when writing $\nabla d_h(\lambda_k)$, since $d_h(\lambda_k)$ is not differentiable in general and $\E^{\mathbb{P}^{\hat{\gamma}_k}} \big[ \Psi\big( \widehat X^{ \hat \gamma_k}_{\cdot} \big) \big]$ is only an element of the superdifferential of $d_h$ at $\lambda_k$ (see Lemma \ref{lemma:d_concave}).
By the concavity of $\lambda_h \mapsto d_h(\lambda_h)$ (see again Lemma \ref{lemma:d_concave}), 
	the convergence of $\lambda_k$ to an optimal solution $\hat \lambda_h$ of \eqref{eq:Dh0_M}
	is ensured as soon as the sequence $(\theta_k)_{k \ge 1}$ of  positive real numbers satisfies
	$$
		\sum_{k\ge 0} \theta_k = \infty,
		~~\mbox{and}~~
		\sum_{k\ge 0} \theta_k^2  < \infty,
	$$
	see \cite[Section 5.2.1]{BTN}.
	We also refer to Nesterov \cite[Section 3.2.3]{nesterov} for more discussions on the choices of $\theta_k >0$ as well as the convergence of the gradient algorithm in maximizing a concave function. Some other methods can be implemented for the maximization of $d_h$, in particular cutting-plane methods. We refer the reader to \cite[Chapter XII]{HUL93bis}.

\paragraph{On the choice of $H_h$}

In the purpose of giving a numerical scheme to approximate the continuous time constrained control problem $V(0)$, there are different possible choices for the kernel function $H_h$ satisfying Assumption \ref{assum:approximation}.
	The question goes back to finding numerical approximation schemes for the standard optimal control problem 
	\begin{equation} \label{eq:optimal_control}
		d(\lambda)
		:=
		\sup_{\gamma \in \Gamma} \E^{\P^{\gamma}} \big[ \Upsilon(X^{\gamma}, \lambda) \big],
		~~\mbox{where}~
		dX^{\gamma}_t 
		=
		\mu(t, X^{\gamma}_{t \wedge \cdot}, \alpha^{\gamma}_t) dt 
		+
		\sigma (t, X^{\gamma}_{t \wedge \cdot}, \alpha^{\gamma}_t) d B^{\gamma}_t.
	\end{equation}
	Notice that  the above optimal control problem can be characterized by the HJB equation in the Markovian context, and by the path-dependent PDE in the non-Markovian context.
	In this context, there exist different numerical schemes, such as the finite difference scheme,
	the semi-Lagrangian scheme in Debrabant and Jakobsen \cite{DJ}, the probabilistic scheme of  Fahim, Touzi and Warin \cite{FTW}, etc.
	In the spirit of Kushner and Dupuis \cite{KushnerDupuis}, 
	one can usually interpret a numerical scheme as
	the backward dynamic programming algorithm of a discrete time control problem 
	$$
		d_h(\lambda) := \sup_{\gamma_h \in \Gamma_h} \E^{\gamma_h} \big[ \Upsilon(\widehat X^{\gamma_h}, \lambda) \big],
		~~\mbox{where}~
		X^{\gamma_h}_{t_{k+1}} = X^{\gamma_h}_{t_k} + H_h \big(t_k, \widehat X^{\gamma_h}_{t_k \wedge \cdot}, \alpha^{\gamma_h}_{t_k}, U^{\gamma}_{t_{k+1}} \big),
	$$
	with appropriate choice of $H_h$.
	In practice, one can usually describe and implement the numerical schemes for optimal control problem \eqref{eq:optimal_control} without giving the explicit expression of $H_h$.
	The kernel function $H_h$ is only used in the technical proof of the convergence.
	Below, we provide a detailed discussion on $H_h$ for the finite difference scheme, and also
	refer to Section 4 of Ren and Tan \cite{RenTan} and Possama{\"i} and Tan \cite{PT} for more detailed discussions on the kernel function $H_h$ for different numerical schemes.

\paragraph{On the resolution of the (unconstrained) control problem $d_h(\lambda_k)$}

	For completeness, we detail the practical computation of $d_h(\lambda_k)$ and $\nabla d_h(\lambda_k)$ when the finite-difference scheme is employed.
	For simplicity of notations, let us stay in the one-dimensional ($d=1$) Markovian case, i.e. $(\mu, \sigma, \Phi, \Psi)$ depends on $X^{\gamma}_t$ in place of the whole path $X^{\gamma}_{t \wedge \cdot}$.

	The finite difference scheme to solve the control problem \eqref{eq:optimal_control} consists first in discretizing $[0,T] \x \R$ into the discrete grid $(t_k, x_i)_{k = 1, \cdots, n, ~i \in \Z}$,
	where $t_k = k \Delta t$, $x_i  := i \Delta x$ satisfies $t_0 = 0$ and $t_n = T$.
	Next, one sets the terminal condition
	$$
		v(t_n, x_i) ~:=~ \Upsilon(x_i, \lambda) = \Phi(x_i) + \lambda \cdot \Psi(x_i),
	$$
	and then computes $v(t_k, \cdot) = v^k_i$ by the backward iteration:
	$$
		v^k_i
		=
		v^{k+1}_i
		+
		\Delta t
		\max_{a \in A} \Big( 
			\mu(t_k, x_i, a) \frac{v^{k+1}_{i+1} - v^{k_1}_i }{\Delta x}
			+ 
			\frac12 \sigma^2(t_k, x_i, a) \frac{v^{k+1}_{i+1} - 2 v^{k+1}_i + v^{k+1}_{i-1}}{\Delta x^2}
		\Big).
	$$
	By direct computation, one can rewrite the backward iteration as
	\begin{equation} \label{eq:DF_vki}
		v^k_i 
		~=~ 
		\max_{a \in A} \Big(
			p_+^{k,i}(a) ~ v^{k+1}_{i+1} + p_-^{k,i}(a) ~v^{k+1}_{i-1} + \big( 1- p_+^{k,i}(a) - p_-^{k,i}(a) \big) ~v^{k+1}_i 
		\Big),
	\end{equation}
	where
	$$
		p_+^{k,i}(a) = \mu(t_k, x_i, a) \frac{\Delta t}{\Delta x} + \frac12 \sigma^2(t_k,  x_i, a) \frac{\Delta t}{\Delta x^2},
		~~~~
		p_-^{k,i}(a) = \frac12 \sigma^2(t_k,  x_i, a) \frac{\Delta t}{\Delta x^2}.
	$$
	Let us denote one optimal choice in \eqref{eq:DF_vki} by
	$\hat a_{k,i}$, and denote $\hat p^{k,i}_+ = p^{\hat a_{k,i}}_+$ and  $\hat p^{k,i}_- = p^{\hat a_{k,i}}_-$. 
	Under appropriate conditions on $\mu, \sigma, \Delta t, \Delta x$, one can ensure that $p_+^{k,i}(a)$, $p_-^{k,i}(a)$ and $(1- p_+^{k,i}(a) - p_-^{k,i}(a))$ are all in $[0,1]$.
	
	Following Kushner and Dupuis \cite{KushnerDupuis},
	one can interpret the backward iteration \eqref{eq:DF_vki} as an optimization problem on
	a controlled Markov chain system,
	where at each node $(t_k, x_i)$, with control $a$, the Markov chain jumps from
	$x_i$ to $x_{i+1}$ with probability $p^{k,i}_+(a)$, and from $x_i$ to $x_{i-1}$ with probability $p^{k,i}_-(a)$, and stays at $x_i$ with probability $1 - p^{k,i}_+(a) - p^{k,i}_-(a)$.
	Therefore, the kernel function $H_h$ could be constructed explicitly such that, with r.v. $U \sim \Uc[0,1]$, 
	$$
		H_h(t_k, x_i, a, U) = 
		\begin{cases}
		\Delta x, &\mbox{with probability}~ p^{k,i}_+(a) ,\\
		-\Delta x, &\mbox{with probability}~p^{k,i}_-(a) ,\\
		0, &\mbox{with probability}~ 1 - p^{k,i}_+(a)  - p^{k,i}_-(a) .
		\end{cases}
	$$
	By direct computation, one can check that
$H_h$ satisfies Assumption \ref{assum:approximation}$\mathrm{(ii)}$.
	Moreover, with the optimal control $\hat a_{k,i}$ at each node $(t_k, x_i)$, 
	and the induced optimal probability function $(\hat p^{k,i}_+, \hat p^{k,i}_-, 1- \hat p^{k,i}_+ - \hat p^{k,i}_-)$,
	one obtains the optimal controlled Markov chain denoted by $\hat \gamma$.
	It follows that one can compute the subgradient $\nabla d_h (\lambda) = \E^{\P^{\hat \gamma}}\big[ \Psi(X^{\hat \gamma}_T) \big]$ in \eqref{eq:update_lambda} (recall that $\Psi$ is assumed to be Markovian for simplicity) using the tower property of conditional expectation.
	More precisely, we set the terminal condition $u(t_n, x_i) := \Psi(x_i)$ and 
	then compute $u(t_k, x_i)$ with the backward iteration
	\begin{equation} \label{eq:computation_Dd_h}
		u(t_k, x_i) 
		=
		\hat p^{k,i}_+ ~u(t_k, x_{i+1}) + \hat p^{k,i}_- ~u(t_k, x_{i-1}) + \big( 1 - \hat p^{k,i}_+ - \hat p^{k,i}_- \big) ~u(t_k, x_{i}),
	\end{equation} 
	and finally one obtains $\nabla d_h(\lambda) = u(t_0, x_0)$.
	
	For many other numerical schemes, such as the semi-Lagrangian scheme in Debrabant and Jakobsen \cite{DJ}, the probabilistic scheme of  Fahim, Touzi and Warin \cite{FTW}, etc., one can write the numerical method in form of \eqref{eq:DF_vki} with some controlled probability $p(\cdot, a)$, and with the optimal controlled probability $p(\cdot, \hat a(\cdot) )$, one can compute $\nabla d_h(\lambda)$ as in \eqref{eq:computation_Dd_h}.
	
	Notice that these schemes can generally be described and implemented as in the backward iteration \eqref{eq:DF_vki} and \eqref{eq:computation_Dd_h} without giving an explicit expression of the kernel function $H_h$, which serves only in the technical convergence proof.	

\subsection{Constrained linear-quadratic problems}

In this section we illustrate our theoretical results by two simple numerical examples with a linear quadratic structure,
so that the reference value of the problem can be computed explicitly.
Concretely, we set $A = \R$, $\mu(t,x,a) = a$, $\sigma(t,x,a) \equiv 1$,
so that for all $\gamma \in \Gamma$, $X^{\gamma}$ is given by
\begin{equation} \label{eq:sde1d}
X^{\gamma}_t = x_0 + \int_0^t  \alpha^{\gamma}_s ds + B^{\gamma}_t,
~~\mbox{where}~
\E^{\P^{\gamma}} \Big[ \int_0^T \big( \alpha^{\gamma}_s \big)^2 ds \Big] < \infty.
\end{equation}
Given $T>0$ and $(a_T,b_T,c_T) \in \R^3$, we define the value function
\begin{equation*}
V(x_0; T,a_T,b_T,c_T) 
~:=~ 
\inf_{\gamma \in \Gamma}
\mathbb{E} \Big[
\int_0^T \frac{1}{2} (\alpha^{\gamma}_t)^2 dt + \frac{1}{2} a_T (X_T^{\gamma})^2 + b_T X_T^{\gamma} + c_T
\Big].
\end{equation*}
With the above linear quadratic structure, one can check that,
if $1+ Ta_T > 0$, then
\begin{equation*}
V(x_0; T,a_T,b_T,c_T)
=
\frac{1}{2} a_0 x_0^2 + b_0 x_0 + c_0,
\end{equation*}
where
\begin{equation} \label{eq:riccati}
a_0= \frac{a_T}{1+ Ta_T}, \quad
b_0= \frac{b_T}{1+ Ta_T}, \quad
c_0= c_T - \frac{Tb_T^2}{2(1+Ta_T)} + \frac{1}{2} \ln(1+Ta_T).
\end{equation}
If $1+T a_T \le 0$, then $V(T,a_T,b_T,c_T)= - \infty$.
\vspace{0.5em}

\emph{Example 1.}
We consider the constrained optimization problem
\begin{equation*}
\inf_{\gamma \in \Gamma} \ \mathbb{E}^{\P^{\gamma}} \Big[ \int_0^1 \frac{1}{2} (\alpha^{\gamma}_t)^2 dt + (X_T^{\gamma})^2 \Big],
\quad
\text{subject to: } \mathbb{E}^{\P^{\gamma}} \big[ -X_T^{\gamma} + 1 \big] = 0,
\end{equation*}
where $X^{\gamma}$ follows \eqref{eq:sde1d} with $x_0 = 0$.
The dual criterion $d(\lambda)$ can be explicitly calculated with formula \eqref{eq:riccati}, applied with $a_1= 2$, $b_1= - \lambda$, and $c_1= \lambda$. 
We obtain
$
d(\lambda)= -\frac{1}{6} \lambda^2 + \lambda + \frac{1}{2} \ln(3).
$
The function $d$ reaches its maximum $\frac{3}{2} + \frac{1}{2} \ln(3)$ at $\lambda^*= 3$.
\vspace{0.5em}

\emph{Example 2.}
We next consider the problem
\begin{equation*}
\inf_{\gamma \in \Gamma} \ \mathbb{E}^{\P^{\gamma}} \Big[ \int_0^1 \frac{1}{2} (\alpha^{\gamma}_t)^2 dt + (X_T^{\gamma})^2 \Big],
\quad
\text{subject to: } \mathbb{E}^{\P^{\gamma}} \big[ (X_T^{\gamma} -1)^2 \big]  - \frac{1}{2} \le 0,
\end{equation*}
where $X^\gamma$ follows \eqref{eq:sde1d} with $x_0= 0$.
The dual criterion, obtained by applying \eqref{eq:riccati} with $a_1= 2+2\lambda$, $b_1= -2\lambda$, $c_1= \frac{1}{2} \lambda$, is given by
\begin{equation*}
d(\lambda)=
\begin{cases}
\begin{array}{ll}
-\frac{2\lambda^2}{3+ 2 \lambda} + \frac{1}{2} \lambda + \frac{1}{2} \ln(3+2\lambda), & \text{if $\lambda \geq -3/2$}, \\
- \infty, & \text{otherwise.}
\end{array}
\end{cases}
\end{equation*}
The function $d$ reaches its maximum (approximately) $0.91$ at $\lambda^* = -1 + \frac{1}{2} \sqrt{19} \approx 1.12$.

We have implemented the numerical procedure as described in Section \ref{subsection:numerics}, combined with two different discretization schemes:
\begin{itemize}
\item the finite-difference scheme (with $\Delta t = h >0$ and $\Delta x= \sqrt{\Delta t/0.1}$).
\item the semi-Lagrangian scheme (with $\Delta t= \Delta x = h$).
\end{itemize}
The results are presented on Figures \ref{fig:numerics} (Example 1) and \ref{fig:numerics2} (Example 2), with a logarithmic scale.
The dual problem has been solved at a high precision, so that $|\nabla d_h(\lambda)| < 10^{-7}$.

We observe that for both examples, empirical rates of convergence can be observed for $|V(0)-V_h(0)|$:
\begin{itemize}
\item equal to $1/2$ for the finite-difference scheme
\item equal to $1$ for the semi-Lagrangian scheme.
\end{itemize}
A rate equal to $1/2$ can also be observed for $|\lambda_k-\lambda^*|$ for the finite-difference scheme, for both examples. For Example 1, the semi-Lagrangian scheme was able to find the exact value of $\lambda^*$. For Example 2, the error $|\lambda_k-\lambda^*|$ converges with rate 1 in the case of the semi-Lagrangian scheme.
Let us emphasize that our theoretical findings only concern the convergence of $|V(0)-V_h(0)|$. 
	 It is still an open question to us on how to prove the convergence or provide a convergence rate of $\lambda_h$ as $h \longrightarrow 0$.
	Let us also mention that under the actual qualification condition, uniqueness of the solution $\lambda^*$ to the dual problem is not guaranteed in general.

\begin{figure}[htb]
\begin{center}
\includegraphics[trim = 1cm 8.5cm 0cm 8cm, clip, scale=0.28]{./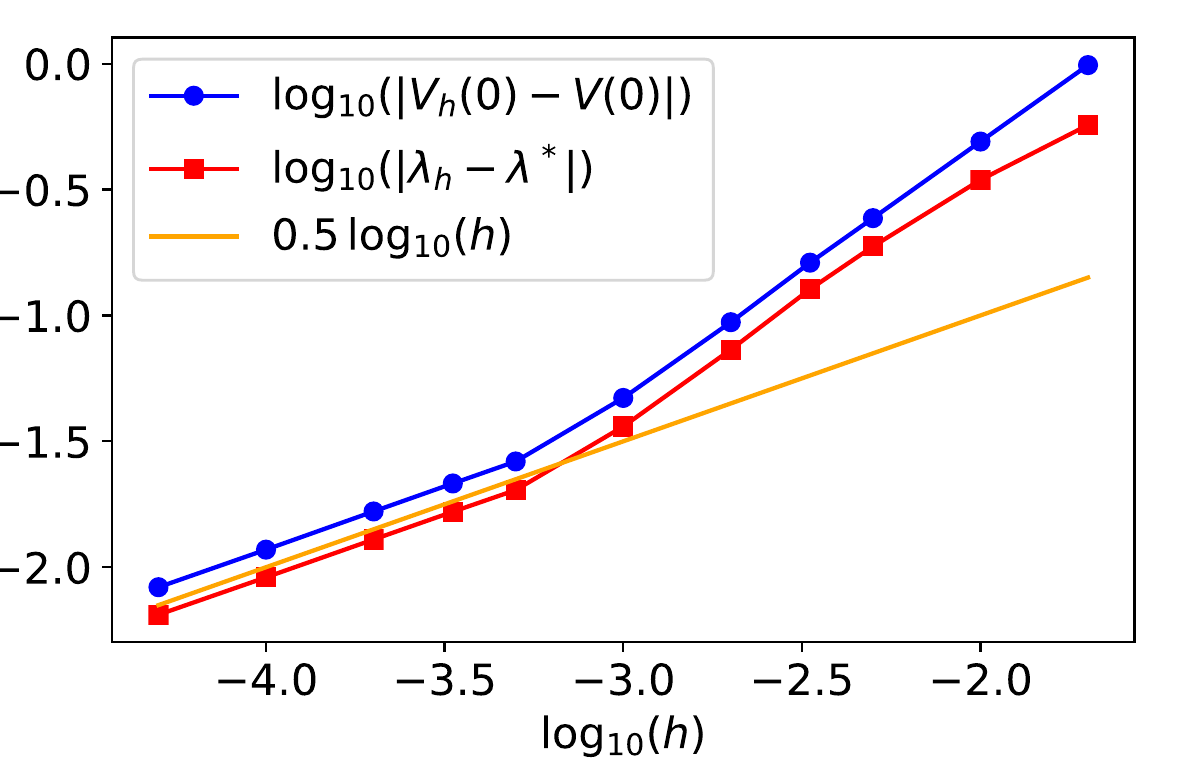}
\hspace{3mm}
\includegraphics[trim = 1cm 8.5cm 0cm 8cm, clip, scale=0.28]{./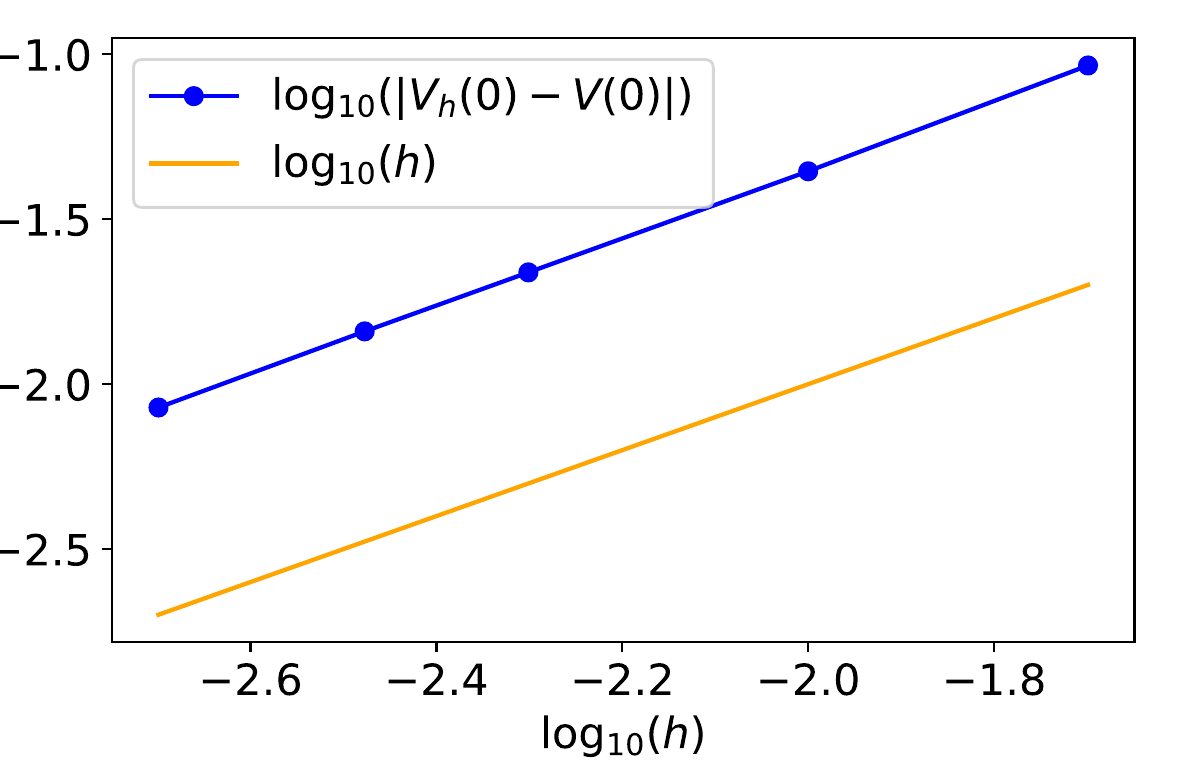}
\caption{Approximation errors for Example 1 (left: finite-difference scheme, right: semi-Lagrangian scheme).}
\label{fig:numerics}
\end{center}
\end{figure}

\begin{figure}[htb]
\begin{center}
\includegraphics[trim = 1cm 8.5cm 0cm 8cm, clip, scale=0.28]{./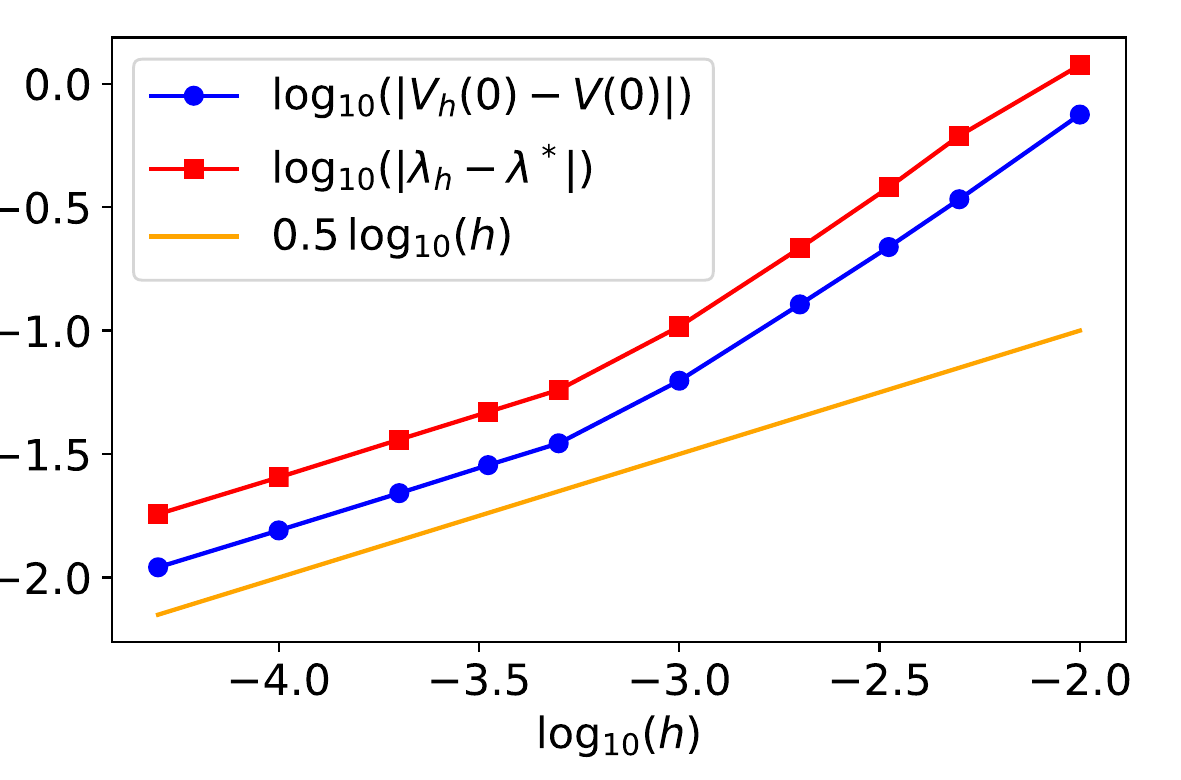}
\hspace{3mm}
\includegraphics[trim = 1cm 8.5cm 0cm 8cm, clip, scale=0.28]{./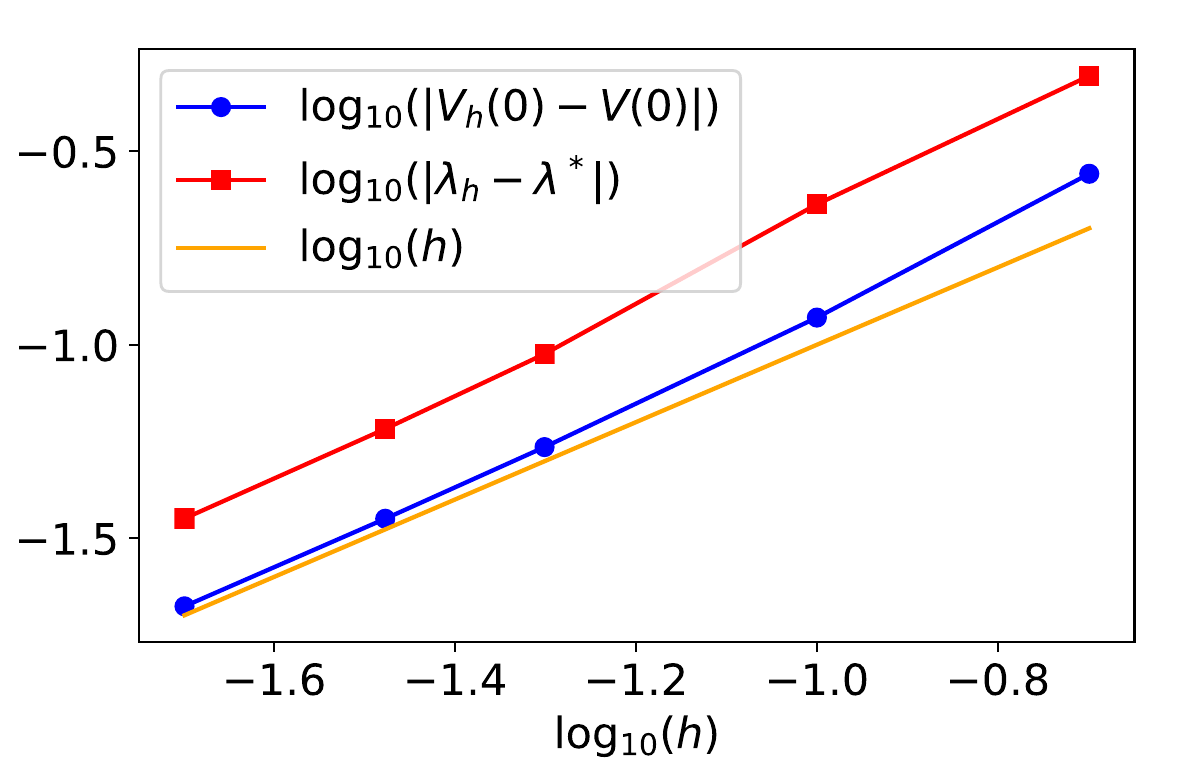}
\caption{Approximation errors for Example 2 (left: finite-difference schem, right: semi-Lagrangian scheme).}
\label{fig:numerics2}
\end{center}
\end{figure}

%\begin{figure}[htb]
%\begin{center}
%\includegraphics[trim = 4cm 8.5cm 4cm 8cm, clip, scale=0.4]{./example_1.pdf}
%\hspace{3mm}
%\includegraphics[trim = 4cm 8.5cm 4cm 8cm, clip, scale=0.4]{./example_2.pdf}
%\caption{Approximation error for different discretization parameters for examples 1 (left) and 2 (right).}
%\label{fig:numerics}
%\end{center}
%\end{figure}

\subsection{A super-replication problem}

We next provide a numerical example in the context of Example \ref{exam:vol_uncertain} with the following parameters:
$S_0 = 1$, $\underline \sigma = 0.04$, $\overline \sigma = 0.3$, $T=1$, $\Psi_i(S_{\cdot}) = (S_T - K_i)_+$, for $i=1,2$ with $K_1 = 0.9$, $K_2 = 1.1$, $c_1 = 0.13$, and $c_2 = 0.04$, $\Phi(S_{\cdot}) = (\int_0^T S_t dt - K)_+$ with $K =1$.

Then the dual problem is
$$
	\inf_{\lambda_1 \in \R, \lambda_2 \ge 0} \sup_{\gamma \in \Gamma} 
	\E \Big[ \Phi(S^{\gamma}) - \lambda_1\big( \Psi_1(S^{\gamma}_T) - c_1 \big)- \lambda_2 \big( \Psi_1(S^{\gamma}_T) - c_2 \big)
	\Big],
$$
where $\Gamma$ consists of the set of all models with dynamic
$$
	dS^{\gamma}_t = \sigma^{\gamma}_t  S^{\gamma}_t  dW^{\gamma}_t,
	~~~
	\underline \sigma \le \sigma^{\gamma}_t \le \overline \sigma,
	~\mbox{for all}~t \in [0,T]~\mbox{a.s.}
$$ 

For the numerical resolution of the problem, we have performed the change of variable $X_t= \ln(S_t)$. 
An additional state variable $Y_t= \int_0^t S_{\theta} d {\theta} = \int_0^t \exp(X_{\theta}) d {\theta}$ has been introduced, in order to deal with the cost $\Phi$.
We have employed the semi-Lagrangian scheme, with $\Delta t= \Delta x= \Delta y=h$. 
For the resolution of the (discretized) dual problem, we have employed the cutting-plane method described in \cite[Chapter XII]{HUL93bis} which turned out to be very effective.
We report below the obtained results for various values of $h$. A clear convergence of $V_h(0)$ can be observed.
\begin{equation*}
\begin{array}{|c|c|c|c|} \hline
h & V_h(0) & \lambda_{1,h} & \lambda_{2,h} \\ \hline
1/5 & 0.0188 & 9.78 & 0 \\
1/6 & 0.0290 & 2.086 & 0 \\
1/10 & 0.0584 & 0.513 & 0.340 \\
1/20 & 0.0615 & 0.741 & 0.213 \\
1/30 & 0.0630 & 0.537 & 0.226 \\
1/50 & 0.0635 & 0.385 & 0.200 \\ \hline
\end{array}
\end{equation*}

\section{Proofs}
\label{sec:Proofs}

\subsection{An abstract duality result}
\label{subsec:abstract_duality}

We consider in this subsection an abstract formulation of the optimal control problem with expectation constraints. 
This formulation will facilitate the presentation of the so called Lagrange relaxation approach, which is at the core of the proof of Theorem \ref{thm:duality}. Let us refer the reader to \cite[Chapter XII]{HUL93} for a detailed presentation of this classical approach. For completeness, we first recall some basic definitions in convex analysis.
For a convex function $f \colon \R^{m + \ell} \rightarrow \R$, the subdifferential $\partial f(x)$ is defined by
\begin{equation*}
\partial f(x)
= \big\{ \lambda \in \R^{m+\ell} ~: f(y) \geq f(x) + \langle \lambda, y-x \rangle, \ \forall y \in \R^{m + \ell} \big\}.
\end{equation*}
The superdifferential of a concave function is defined similarly, by changing the inequality sign in the above inequality. The conjugate $f^* \colon \R^{m+\ell} \rightarrow \R$ of the function $f$ is defined by
\begin{equation*}
f^*(\lambda)
=
\sup_{x \in \R^{m+\ell}} \Big( \langle \lambda, x \rangle - f(x) \Big).
\end{equation*}

All along this subsection, we consider an abstract measurable space $(\Om, \Fc)$, which we equip with a convex subset $\mathcal{P}$ of $\mathcal{P}(\Omega)$
and some random variables $\xi$ and $\zeta_1,...,\zeta_{m+\ell}$.
We make the following assumptions:
\begin{itemize}
\item[(A1)] For all $\mathbb{P} \in \mathcal{P}$ and for all $i=1,...,m + \ell$, we have
$\mathbb{E}^{\mathbb{P}} [ |\xi | ] < \infty$ and
$\mathbb{E}^{\mathbb{P}} [ | \zeta_i | ] < \infty$;
\item[(A2)] $ M := \sup_{\P \in \Pc} \big| \mathbb{E}^{\mathbb{P}} [ \xi ] \big| < \infty$;
\item[(A3)] There exists $\varepsilon > 0$ such that the following inclusion holds true:
\begin{equation*}
B_\varepsilon(0) \subseteq
\big\{ z \in \R^{m + \ell}  ~: \exists \mathbb{P} \in \mathcal{P}, \, \mathbb{E}^{\mathbb{P}} [ \zeta ] + z \in \{ 0_m \} \times \R_-^{\ell} \big\},
\end{equation*}
where $B_\varepsilon(0)$ denotes the ball of radius $\varepsilon$ and center 0 for the supremum norm.
\end{itemize}

We aim at analyzing the following problem, referred to in this subsection as primal problem:
\begin{equation} \label{eq:primal}
V(0) := \inf_{\mathbb{P} \in \mathcal{P}} \mathbb{E}^{\mathbb{P}} [ \xi ], \quad \text{subject to: }
\begin{cases}
\begin{array}{l}
\mathbb{E}^{\mathbb{P}} [ \zeta_i ] = 0, \quad  \forall i=1,...,m, \\
\mathbb{E}^{\mathbb{P}} [ \zeta_i ] \leq 0, \quad \forall i=m+1,...,m+\ell.
\end{array}
\end{cases}
\end{equation}
Observe that this is a convex problem, since $\mathcal{P}$ is assumed to be convex and the expectation is linear with respect to the involved probability measure.
We next introduce the dual criterion $d \colon \R^{m + \ell} \rightarrow \R$, defined as follows:
\begin{equation*}
d(\lambda) := \inf_{\mathbb{P} \in \mathcal{P}} \mathcal{L}(\mathbb{P},\lambda),
~~\mbox{where}~\mathcal{L}(\mathbb{P},\lambda) := \mathbb{E}^{\mathbb{P}} [ \xi ]
+ \langle \lambda, \mathbb{E}^{\mathbb{P}} [\zeta] \rangle
~\mbox{is the Lagrangian}.
\end{equation*}
Finally, the dual problem to \eqref{eq:primal} is given by
\begin{equation} \label{eq:dual}
D(0) := \sup_{\lambda \in \R^{m + \ell}} d(\lambda), \quad \text{subject to: } \lambda \in \{ 0_m \} \times \R_+^{\ell}.
\end{equation}
The main result of the subsection is the following proposition.

\begin{proposition} \label{prop:duality}
The primal problem \eqref{eq:primal} and the dual problem \eqref{eq:dual} have the same value. Moreover, the set of solutions to the dual problem is non-empty, coincide with $\partial V(0)$, and is bounded. 
More precisely, any dual optimizer $\lambda^*$ satisfies
\begin{equation} \label{eq:estimate_lambda}
\| \lambda \|_1 := \sum_{i=1}^{m + \ell} |\lambda_i| \leq \frac{2M}{\varepsilon}.
\end{equation} 
\end{proposition}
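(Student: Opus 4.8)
The plan is to exploit the convexity of the primal problem and work with the value function
\[
\mathcal{V}(z) := \inf_{\mathbb{P} \in \mathcal{P}} \big\{ \mathbb{E}^{\mathbb{P}}[\xi] ~:~ \mathbb{E}^{\mathbb{P}}[\zeta] + z \in \{0_m\} \times \R_-^{\ell} \big\},
\]
so that $\mathcal{V}(0) = V(0)$. First I would check that $\mathcal{V}$ is convex: this follows from the convexity of $\mathcal{P}$ together with the linearity of $\mathbb{P} \mapsto (\mathbb{E}^{\mathbb{P}}[\xi], \mathbb{E}^{\mathbb{P}}[\zeta])$ and the fact that $\{0_m\} \times \R_-^{\ell}$ is a convex cone, by a standard epigraph argument. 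Next, assumption (A3) says that $B_\varepsilon(0)$ is contained in the (effective) domain of $\mathcal{V}$, hence $0$ is an interior point of $\mathrm{dom}\,\mathcal{V}$, while (A2) gives $\mathcal{V}(z) \ge -M$ for every $z \in \mathrm{dom}\,\mathcal{V}$ and $\mathcal{V}(0) \le M$. A proper convex function that is bounded below on a neighbourhood of $0$ and finite at $0$ is continuous at $0$ and, in particular, subdifferentiable there; so $\partial \mathcal{V}(0) \ne \emptyset$.

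The second step is the dictionary between $\partial \mathcal{V}(0)$ and the dual problem. A direct computation shows that the concave conjugate-type quantity $\inf_{z}\big(\mathcal{V}(z) + \langle \lambda, z\rangle\big)$ equals $d(-\lambda)$ when $\lambda \in \{0_m\}\times\R_+^{\ell}$ and equals $-\infty$ otherwise: indeed, writing $z = -\mathbb{E}^{\mathbb{P}}[\zeta] + w$ with $w \in \{0_m\}\times\R_-^{\ell}$, one gets $\mathcal{V}(z) + \langle \lambda, z\rangle \ge \mathbb{E}^{\mathbb{P}}[\xi] - \langle \lambda, \mathbb{E}^{\mathbb{P}}[\zeta]\rangle + \langle \lambda, w\rangle$, and the infimum over $w$ of $\langle \lambda, w \rangle$ is $0$ if $\lambda$ lies in the dual cone $\{0_m\}\times\R_+^{\ell}$ and $-\infty$ otherwise. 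Hence, by the Fenchel–Young characterisation of the subdifferential, $-\lambda \in \partial \mathcal{V}(0)$ (with $\lambda \in \{0_m\}\times\R_+^{\ell}$) if and only if $d(\lambda) = \mathcal{V}(0) = V(0)$; combined with weak duality $d(\lambda) \le V(0)$ (which is immediate, since for feasible $\mathbb{P}$ one has $\mathcal{L}(\mathbb{P},\lambda) \le \mathbb{E}^{\mathbb{P}}[\xi]$ when $\lambda \in \{0_m\}\times\R_+^{\ell}$), this yields $D(0) = V(0)$, nonemptiness of the set of dual optimizers, and its identification with $-\partial\mathcal{V}(0)$ restricted to the dual cone — which is exactly $\partial V(0)$ in the paper's sign convention once one notes feasible perturbations only decrease the value in the relevant directions.

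The last step is the a priori bound $\|\lambda^*\|_1 \le 2M/\varepsilon$. Fix a dual optimizer $\lambda^*$. For any $z$ with $\|z\|_\infty \le \varepsilon$, by (A3) there is $\mathbb{P}\in\mathcal{P}$ with $\mathbb{E}^{\mathbb{P}}[\zeta]+z \in \{0_m\}\times\R_-^{\ell}$, so $\mathcal{V}(z) \le \mathbb{E}^{\mathbb{P}}[\xi] \le M$; using the subgradient inequality $\mathcal{V}(z) \ge \mathcal{V}(0) + \langle -\lambda^*, z\rangle \ge -M - \langle \lambda^*, z\rangle$ gives $\langle \lambda^*, z \rangle \le 2M$ for all such $z$. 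Choosing $z = \varepsilon\, \mathrm{sign}(\lambda^*)$ componentwise (which is admissible since $\|z\|_\infty = \varepsilon$, after a small caveat: $\mathcal{V}$ is defined on a possibly non-full-dimensional domain, but (A3) guarantees the closed $\varepsilon$-ball, for the sup norm, lies inside $\mathrm{dom}\,\mathcal{V}$, so this choice is legitimate) produces $\varepsilon \|\lambda^*\|_1 \le 2M$, i.e. $\|\lambda^*\|_1 \le 2M/\varepsilon$.

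The main obstacle I anticipate is being careful about the sign/cone bookkeeping in the Fenchel duality step — in particular making sure that the constraint $\lambda \in \{0_m\}\times\R_+^{\ell}$ emerges correctly from the recession directions of the constraint set $\{0_m\}\times\R_-^{\ell}$, and that the identification of the dual solution set with $\partial V(0)$ is stated with the right sign convention. The convex-analytic facts themselves (continuity of a convex function finite and bounded below near an interior point of its domain, nonemptiness of the subdifferential there, the Fenchel–Young equality case) are standard and can be cited, e.g. from \cite{HUL93}.
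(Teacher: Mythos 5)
Your architecture is the same as the paper's: perturb the constraints by $z$, show the value function $V(\cdot)$ is convex and finite (hence continuous and subdifferentiable) near $0$ from the convexity of $\mathcal{P}$ and (A2)--(A3), identify dual solutions with subgradients via Fenchel--Young, and extract the bound from the subgradient inequality with a sign-pattern perturbation of size $\varepsilon$. However, the central conjugacy computation is wrong as written, and it is not merely bookkeeping. For $w \in \{0_m\}\times\R_-^{\ell}$ the first $m$ coordinates of $w$ vanish, so $\langle \lambda, w\rangle$ places no restriction whatsoever on $\lambda_1,\dots,\lambda_m$; and since the last $\ell$ coordinates of $w$ may tend to $-\infty$, the \emph{infimum} of $\langle\lambda,w\rangle$ over this cone is $0$ exactly when $\lambda_{m+1},\dots,\lambda_{m+\ell}\le 0$ (i.e.\ $\lambda\in\R^m\times\R_-^{\ell}$), and $-\infty$ otherwise. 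Your condition $\lambda\in\{0_m\}\times\R_+^{\ell}$ is wrong on both blocks: the sign on the inequality block is the one for a supremum, not an infimum, and forcing the first $m$ components to vanish kills the equality multipliers. Carried out literally, your plan would at best prove duality against the dual restricted to zero equality multipliers, which in general has a gap with $V(0)$; the dual relevant here (and used in Theorem \ref{thm:duality} and in the paper's proof) runs over $\lambda\in\R^m\times\R_+^{\ell}$ --- the ``$\{0_m\}$'' appearing in \eqref{eq:dual} is evidently a typo, and your proof adopts it as a mathematical ingredient instead of correcting it; your own ``anticipated obstacle'' paragraph repeats the same wrong cone, so this is a misidentification of the dual feasible set, not a risk you could fix in passing.

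The downstream statements inherit the error. You identify the dual solution set with $-\partial V(0)$ ``which is exactly $\partial V(0)$''; with the correct computation ($V^*(\lambda)=\sup_{z'\in\{0_m\}\times\R_-^{\ell}}\langle\lambda,z'\rangle - d(\lambda)$, which equals $-d(\lambda)$ on $\R^m\times\R_+^{\ell}$ and $+\infty$ elsewhere) the dual solution set is $\partial V(0)$ itself, with no reflection: e.g.\ for $m=0$, $\ell=1$ the map $z\mapsto V(z)$ is nondecreasing, so $\partial V(0)\subset\R_+$, consistent with $\lambda^*\ge 0$, whereas $-\partial V(0)$ has the wrong sign. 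Similarly, with your convention $-\lambda^*\in\partial V(0)$ the subgradient inequality yields $\langle\lambda^*,z\rangle\ge -2M$, so the extremal choice must be $z=-\varepsilon\,\mathrm{sign}(\lambda^*)$, not $+\varepsilon\,\mathrm{sign}(\lambda^*)$ as you wrote (also note $V(z)+\langle\lambda,z\rangle\le \E^{\P}[\xi]-\langle\lambda,\E^{\P}[\zeta]\rangle+\langle\lambda,w\rangle$, not $\ge$, for a feasible $\P$). All of this is repaired by following the paper's Step 2 verbatim: compute $V^*(\lambda)$ directly, obtain $V^*=-d$ on $\R^m\times\R_+^{\ell}$ and $+\infty$ elsewhere, conclude via Fenchel--Young that the dual solutions coincide with $\partial V(0)$ and that $V(0)=V^{**}(0)=D(0)$, and get the estimate from $M\ge V(\varepsilon\,\mathrm{sign}(\lambda^*))\ge V(0)+\varepsilon\|\lambda^*\|_1\ge -M+\varepsilon\|\lambda^*\|_1$.
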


\begin{proof}
The key idea of the approach consists in parametrizing Problem \eqref{eq:primal} by introducing a variable $z$ in the constraints. We introduce the function $V \colon \R^{m + \ell} \rightarrow \R$, defined as follows:
\begin{equation} \label{eq:Abstract_V}
V(z)=
\inf_{\mathbb{P} \in \mathcal{P}} \, \mathbb{E}^{\mathbb{P}} [ \xi ], \quad \text{subject to: }
\mathbb{E}^{\mathbb{P}} [ \zeta ] + z  \in \{ 0_m \} \times \R_-^{\ell}. \\
\end{equation}

\emph{Step 1: The function $V$ is convex and finite in a neighborhood of 0}.
Take $z^{(1)},z^{(2)} \in \R^{m+\ell}$, and $\mathbb{P}_1, \mathbb{P}_2 \in \mathbb{P}$ such that $\mathbb{E}^{\mathbb{P}_i} [ \zeta ] + z^{(i)} \in \{ 0_m \} \times \R_-^{\ell}$, $i=1,2$. Let $\theta \in [0,1]$. Let us set $z = \theta z^{(1)} + (1-\theta)z^{(2)}$ and $\mathbb{P}= \theta \mathbb{P}_1 + (1-\theta) \mathbb{P}_2$. By convexity of $\mathcal{P}$, we have that $\mathbb{P} \in \mathcal{P}$. Moreover,
\begin{equation*}
\mathbb{E}^{\mathbb{P}} [ \zeta ] + z
= \theta \big( \mathbb{E}^{\mathbb{P}_1} [\zeta ] + z^{(1)} \big) + (1-\theta) \big( \mathbb{E}^{\mathbb{P}_2} [ \zeta ] + z^{(2)} \big) \in \{ 0_m \} \times \R_-^{\ell}.
\end{equation*}
Therefore,
\begin{equation*}
V(z) \leq \mathbb{E}^{\mathbb{P}} [ \xi ]
\leq \theta \mathbb{E}^{\mathbb{P}_1} [ \xi ] + (1-\theta) \mathbb{E}^{\mathbb{P}_2} [ \xi ].
\end{equation*} 
Minimizing the right-hand side with respect to $\mathbb{P}_1$ and $\mathbb{P}_2$, we finally obtain that $V(z) \leq \theta V(z^{(1)}) + (1-\theta) V(z^{(2)})$, which concludes the proof of convexity.
By Assumption (A2), the function $V$ is bounded from below by $-M$. By Assumption (A3), we have that for all $z \in B_\varepsilon(0)$, the optimization problem associated with $V(z)$ is feasible, thus $V(z) < \infty$. This proves that $V$ is finite in a neighbourhood of 0.

\emph{Step 2: Calculation of the conjugate function $V^*$.}
For $\lambda \in \R^{m + \ell}$, we have
\begin{align*}
V^*(\lambda)
= \ & \sup_{z \in \R^{m + \ell}} \big( \langle \lambda, z \rangle - V(z) \big) \\
= \ & \sup_{z \in \R^{m + \ell}, \, \mathbb{P} \in \mathcal{P}} \
\langle \lambda, z \rangle - \mathbb{E}^{\mathbb{P}} [ \xi ], \quad \text{subject to: }
\mathbb{E}^{\mathbb{P}}[ \zeta ] + z \in \{ 0_m \} \times \R_-^{\ell}.
\end{align*}
We make the changes of variable $z'= \mathbb{E}^{\mathbb{P}}[ \zeta ] + z$. Observing that
\begin{equation*}
\langle \lambda, z \rangle - \mathbb{E}^{\mathbb{P}} [\xi ]
= \langle \lambda, z' \rangle - \mathcal{L}(\mathbb{P},\lambda),
\end{equation*}
we deduce that
\begin{equation*}
V^*(\lambda)= \sup_{z' \in \{ 0_m \} \times \R_-^{\ell} } \langle \lambda, z' \rangle - \inf_{\mathbb{P} \in \mathcal{P}} \mathcal{L}(\mathbb{P},\lambda).
\end{equation*}
For all $\lambda \in \R^{m + \ell}$, we have
\begin{equation*}
\sup_{z' \in \{ 0_m \} \times \R_-^{\ell} } \langle \lambda, z' \rangle
= \begin{cases}
\begin{array}{cl}
0, & \text{ if $\lambda \in \R^m \times \R_+^{\ell}$,} \\
+ \infty, & \text{ otherwise.}
\end{array}
\end{cases}
\end{equation*}
Therefore,
\begin{equation*}
V^*(\lambda)= \begin{cases}
\begin{array}{cl}
- \inf_{\mathbb{P} \in \mathcal{P}} \mathcal{L}(\mathbb{P},\lambda) = - d(\lambda), & \text{ if $\lambda \in \R^m \times \R_+^{\ell}$}, \\
+ \infty, & \text{ otherwise.}
\end{array}
\end{cases}
\end{equation*}

\emph{Step 3: Duality result.} As a consequence of \cite[Proposition 2.108]{BS00}, the mapping $V$ is continuous at $0$, since it is convex and finite in a neighborhood of 0. It follows with \cite[Proposition 2.126]{BS00} that $\partial V(0)$ is non-empty. It follows further with \cite[Proposition 2.118]{BS00} that $V(0)= V^{**}(0)$. Finally, the equivalence in \cite[Relation 2.232]{BS00} ensures that for all $\lambda \in \R^{m + \ell}$,
\begin{equation*}
V(0) + V^*(\lambda) = \langle \lambda, 0 \rangle = 0
\Longleftrightarrow \lambda \in \partial V(0).
\end{equation*}
Therefore, we have
\begin{equation*}
V(0)= V^{**}(0) = \sup_{\lambda \in \R^{m + \ell}} \langle 0, \lambda \rangle - V^*(\lambda)
= \sup_{\lambda \in \R^m \times \R^\ell_+} d(\lambda),
\end{equation*}
proving that the primal and the dual problems have the same value. 
For all $\lambda \in \partial V(0)$, we have $V(0)= - V^*(\lambda)$, thus $V^*(\lambda)$ is finite, and necessarily, we have $\lambda \in \R_m \times \R_+^{\ell}$ and $V^*(\lambda)= - d(\lambda)$, which proves that $\lambda$ is a solution to the dual problem.
Conversely, if $\lambda$ is a solution to the dual problem, then we have $V(0)= d(\lambda)= - V^*(\lambda)$ and therefore $\lambda \in \partial V(0)$.

\emph{Step 4: Boundedness of the set of dual solutions.}
Let $\lambda^* \in \partial V(0)$. Let $z$ be defined by $z_i= \text{sign}(\lambda^*_i) \varepsilon$.
By Assumptions (A2) and (A3), we have $V(z) \leq M$ and $V(0) \geq - M$.
We have
\begin{equation*}
M \geq V(z) \geq V(0) + \langle \lambda^*, z \rangle 
\geq
 - M + \varepsilon  \sum_{i=1}^{m + \ell} | \lambda^*_i |
= 
- M + \varepsilon \| \lambda^* \|_1.
\end{equation*}
Then it follows that \eqref{eq:estimate_lambda} holds true.
\end{proof}

As consequence of Proposition \ref{prop:duality}, we obtain the following optimality conditions.

\begin{corollary} \label{coro:charact_dual_optimizer_abstr}
For all solutions $\mathbb{P}^*$ to the primal problem \eqref{eq:primal} and for all solutions $\lambda^*$ to the dual problem \eqref{eq:dual}, one has
\begin{equation} \label{eq:opti_cond1}
\mathcal{L}(\mathbb{P}^*,\lambda^*)= \min_{\mathbb{P} \in \mathcal{P}} \mathcal{L}(\mathbb{P},\lambda^*).
\end{equation}
Moreover, for each $i=m+1,...,m+\ell$, one has $ \lambda^*_i = 0$ whenever
$\mathbb{E}^{\mathbb{P}^*} [ \zeta_i ] < 0$,
i.e.\@ $\langle \lambda^*, \mathbb{E}^{\mathbb{P}^*} [ \zeta] \rangle= 0$.
\end{corollary}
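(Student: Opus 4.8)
The plan is to deduce the corollary from the absence of a duality gap established in Proposition~\ref{prop:duality}, by a short ``squeeze'' of inequalities using primal feasibility and the sign constraint on $\lambda^*$; no new machinery is needed.

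First I would record the three ingredients. By Proposition~\ref{prop:duality}, for any dual optimizer $\lambda^*$ we have $V(0) = D(0) = d(\lambda^*)$ and moreover $\lambda^* \in \R^m \times \R_+^{\ell}$. By the definition of $d$, $d(\lambda^*) = \inf_{\mathbb{P} \in \mathcal{P}} \mathcal{L}(\mathbb{P},\lambda^*) \le \mathcal{L}(\mathbb{P}^*,\lambda^*)$. And primal feasibility of $\mathbb{P}^*$ gives $\mathbb{E}^{\mathbb{P}^*}[\zeta_i] = 0$ for $i=1,\dots,m$, $\mathbb{E}^{\mathbb{P}^*}[\zeta_i] \le 0$ for $i = m+1,\dots,m+\ell$, together with $\mathbb{E}^{\mathbb{P}^*}[\xi] = V(0)$.

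Then I would chain these facts. Since the first $m$ coordinates of $\mathbb{E}^{\mathbb{P}^*}[\zeta]$ vanish, $\langle \lambda^*, \mathbb{E}^{\mathbb{P}^*}[\zeta]\rangle = \sum_{i=m+1}^{m+\ell} \lambda_i^* \, \mathbb{E}^{\mathbb{P}^*}[\zeta_i]$, and each summand is $\le 0$ because $\lambda_i^* \ge 0$ and $\mathbb{E}^{\mathbb{P}^*}[\zeta_i] \le 0$; hence $\mathcal{L}(\mathbb{P}^*,\lambda^*) = \mathbb{E}^{\mathbb{P}^*}[\xi] + \langle \lambda^*, \mathbb{E}^{\mathbb{P}^*}[\zeta]\rangle \le V(0)$. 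Combined with the previous step this yields $V(0) = d(\lambda^*) \le \mathcal{L}(\mathbb{P}^*,\lambda^*) \le V(0)$, so all inequalities are equalities. The equality $\mathcal{L}(\mathbb{P}^*,\lambda^*) = \inf_{\mathbb{P}\in\mathcal{P}} \mathcal{L}(\mathbb{P},\lambda^*)$ is exactly \eqref{eq:opti_cond1}, with the infimum attained at $\mathbb{P}^*$ (hence a minimum). The equality $\mathcal{L}(\mathbb{P}^*,\lambda^*) = \mathbb{E}^{\mathbb{P}^*}[\xi]$ forces $\langle \lambda^*, \mathbb{E}^{\mathbb{P}^*}[\zeta]\rangle = 0$; since the left-hand side is a sum of nonpositive terms, each must vanish, i.e.\@ $\lambda_i^* \, \mathbb{E}^{\mathbb{P}^*}[\zeta_i] = 0$ for every $i$, which gives both the complementary slackness identity and the implication $\mathbb{E}^{\mathbb{P}^*}[\zeta_i] < 0 \Rightarrow \lambda_i^* = 0$.

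I do not expect any genuine obstacle: the entire content sits in Proposition~\ref{prop:duality}, and this is the textbook consequence of strong duality. The only mild subtlety is that the statement is vacuous unless a primal optimizer exists, and existence of primal solutions is not asserted in Proposition~\ref{prop:duality}; for the corollary one simply quantifies over whichever primal optimizers there are, so nothing further is required.
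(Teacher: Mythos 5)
Your proposal is correct and follows essentially the same route as the paper: the same squeeze $V(0)=d(\lambda^*)\le \mathcal{L}(\mathbb{P}^*,\lambda^*)\le \mathbb{E}^{\mathbb{P}^*}[\xi]=V(0)$, forcing all inequalities to be equalities, with the paper merely phrasing the middle equality through $\lambda^*\in\partial V(0)$ and the conjugate $V^*$ rather than quoting the absence of a duality gap directly. The complementarity conclusion is obtained identically, from $\langle\lambda^*,\mathbb{E}^{\mathbb{P}^*}[\zeta]\rangle=0$ being a sum of nonpositive terms.
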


\begin{proof}
Since $\mathbb{E}^{\mathbb{P}^*} [ \zeta ] \in \{ 0_m \} \times \R_-^{\ell}$ and $\lambda^* \in \R^m \times \R_+^{\ell}$, we have
\begin{equation*}
\mathcal{L}(\mathbb{P}^*,\lambda^*)
= \mathbb{E}^{\mathbb{P}^*}[ \xi ] + \langle \lambda^*, \mathbb{E}^{\mathbb{P}^*} [ \zeta] \rangle
\leq \mathbb{E}^{\mathbb{P}^*}[ \xi ].
\end{equation*}
By Proposition \ref{prop:duality}, $\lambda^* \in \partial V(0)$. Therefore,
\begin{equation*}
\mathcal{L}(\mathbb{P}^*,\lambda^*)
\leq V(0)
= - V^*(\lambda^*)
= d(\lambda^*)
= \inf_{\mathbb{P} \in \mathcal{P}} \mathcal{L}(\mathbb{P},\lambda^*).
\end{equation*}
Of course, since $\mathbb{P}^* \in \mathcal{P}$, we also have that $\mathcal{L}(\mathbb{P}^*,\lambda^*) \geq \inf_{\mathbb{P} \in \mathcal{P}} \mathcal{L}(\mathbb{P},\lambda^*)$. As a consequence, all the above inequalities are equalities. Equality \eqref{eq:opti_cond1} is proved, and we also have that $\langle \lambda^*, \mathbb{E}^{\mathbb{P}^*} [ \zeta] \rangle= 0$, from which the complementarity condition easily follows.
\end{proof}

The necessary optimality conditions of Corollary \ref{coro:charact_dual_optimizer_abstr} are also sufficient optimality conditions.

\begin{lemma} \label{lemma:sufficiency}
Let $\mathbb{P}^* \in \mathcal{P}$. Assume that there exists $\lambda^* \in \R^m \times \R_+^{\ell}$ such that $\langle \lambda^*, \mathbb{E}^{\mathbb{P}^*} [ \zeta] \rangle= 0$ and such that
\begin{equation*}
\mathcal{L}(\mathbb{P}^*,\lambda^*)= \min_{\mathbb{P} \in \mathcal{P}} \mathcal{L}(\mathbb{P},\lambda^*).
\end{equation*}
Then $\mathbb{P}^*$ is a solution to \eqref{eq:primal}.
\end{lemma}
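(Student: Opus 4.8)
The plan is to establish the standard sufficiency direction of Lagrangian duality, which is entirely elementary once the right chain of inequalities is assembled. The one point that genuinely needs care is feasibility of $\mathbb{P}^*$: to call $\mathbb{P}^*$ a \emph{solution} to \eqref{eq:primal} one needs $\mathbb{E}^{\mathbb{P}^*}[\zeta] \in \{0_m\} \times \R_-^{\ell}$, and this should be read as part of the hypothesis (consistently with the role of $\gamma^* \in \Gamma(0)$ in Theorem \ref{thm:duality}), since the complementarity relation $\langle \lambda^*, \mathbb{E}^{\mathbb{P}^*}[\zeta] \rangle = 0$ together with minimality of $\mathbb{P}^*$ for $\mathcal{L}(\cdot,\lambda^*)$ does not by itself force $\mathbb{E}^{\mathbb{P}^*}[\zeta]$ into the constraint set. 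Granting feasibility of $\mathbb{P}^*$, the proof is as follows.

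First I would unwind the Lagrangian at $\mathbb{P}^*$: by the complementarity hypothesis, $\mathcal{L}(\mathbb{P}^*,\lambda^*) = \mathbb{E}^{\mathbb{P}^*}[\xi] + \langle \lambda^*, \mathbb{E}^{\mathbb{P}^*}[\zeta] \rangle = \mathbb{E}^{\mathbb{P}^*}[\xi]$. Next, let $\mathbb{P} \in \mathcal{P}$ be an arbitrary \emph{feasible} point of \eqref{eq:primal}, i.e.\ $\mathbb{E}^{\mathbb{P}}[\zeta_i] = 0$ for $i \le m$ and $\mathbb{E}^{\mathbb{P}}[\zeta_i] \le 0$ for $m < i \le m+\ell$. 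Since $\lambda^* \in \R^m \times \R_+^{\ell}$, the vanishing of the first $m$ components of $\mathbb{E}^{\mathbb{P}}[\zeta]$ gives $\langle \lambda^*, \mathbb{E}^{\mathbb{P}}[\zeta] \rangle = \sum_{i=m+1}^{m+\ell} \lambda^*_i\, \mathbb{E}^{\mathbb{P}}[\zeta_i] \le 0$, hence $\mathbb{E}^{\mathbb{P}}[\xi] \ge \mathcal{L}(\mathbb{P},\lambda^*)$. Combining this with the minimality hypothesis $\mathcal{L}(\mathbb{P}^*,\lambda^*) = \min_{\mathbb{Q} \in \mathcal{P}} \mathcal{L}(\mathbb{Q},\lambda^*) \le \mathcal{L}(\mathbb{P},\lambda^*)$, I get
\[
\mathbb{E}^{\mathbb{P}^*}[\xi] \;=\; \mathcal{L}(\mathbb{P}^*,\lambda^*) \;\le\; \mathcal{L}(\mathbb{P},\lambda^*) \;\le\; \mathbb{E}^{\mathbb{P}}[\xi].
\]
Since this holds for every feasible $\mathbb{P} \in \mathcal{P}$ and $\mathbb{P}^*$ is itself feasible, $\mathbb{P}^*$ attains the infimum defining $V(0)$, i.e.\ it is a solution to \eqref{eq:primal}.

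As for difficulty, there is essentially no obstacle in the computation; the only things to watch are (i) making explicit that the hypotheses are to be applied to a feasible $\mathbb{P}^*$, and (ii) that the step $\langle \lambda^*, \mathbb{E}^{\mathbb{P}}[\zeta] \rangle \le 0$ for feasible $\mathbb{P}$ uses both the sign constraint $\lambda^* \in \R^m \times \R_+^{\ell}$ and the fact that the first $m$ coordinates of $\mathbb{E}^{\mathbb{P}}[\zeta]$ vanish. Notably, this direction uses neither convexity of $\mathcal{P}$, nor Assumptions (A2)--(A3), nor the duality result of Proposition \ref{prop:duality}; it is purely the weak-duality bookkeeping made sharp by the two assumed equalities.
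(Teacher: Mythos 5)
Your proof is correct and is essentially the paper's argument: for any feasible $\mathbb{P}$ one has the same chain $\mathbb{E}^{\mathbb{P}}[\xi] \ge \mathcal{L}(\mathbb{P},\lambda^*) \ge \mathcal{L}(\mathbb{P}^*,\lambda^*) = \mathbb{E}^{\mathbb{P}^*}[\xi]$, using the sign of $\lambda^*$, the minimality hypothesis, and complementarity. Your side remark that feasibility of $\mathbb{P}^*$ must be read as part of the hypothesis is also consistent with how the lemma is applied (to $\gamma^* \in \Gamma(0)$) in Theorem \ref{thm:duality}.
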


\begin{proof}
Let $\mathbb{P}$ be feasible for \eqref{eq:primal}. Then the following inequalities and equality can be easily verified:
$
\mathbb{E}^{\mathbb{P}}[\xi]
\geq \mathcal{L}(\mathbb{P},\lambda^*)
\geq \mathcal{L}(\mathbb{P}^*,\lambda^*)
= \mathbb{E}^{\mathbb{P}^*}[\xi],
$
which proves the lemma.
\end{proof}

We give now a property of the superdifferential of $d$, which in particular can be used in a numerical algorithm to solve the dual problem.

\begin{lemma} \label{lemma:d_concave}
The map $d$ is concave. Moreover, for all $\lambda \in \R^{m + \ell}$, for all $\mathbb{P} \in \mathcal{P}$ such that $d(\lambda)= \mathcal{L}(\mathbb{P},\lambda)$, the vector $z:= \mathbb{E}^{\mathbb{P}} [ \zeta ]$ lies in the superdifferential of $d$ at $\lambda$.
\end{lemma}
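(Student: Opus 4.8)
The plan is to exploit the fact that $d$ is, by construction, a pointwise infimum of affine functions of $\lambda$. First I would fix $\mathbb{P} \in \mathcal{P}$ and observe that $\lambda \mapsto \mathcal{L}(\mathbb{P},\lambda) = \mathbb{E}^{\mathbb{P}}[\xi] + \langle \lambda, \mathbb{E}^{\mathbb{P}}[\zeta] \rangle$ is affine in $\lambda$, hence concave. Since an infimum of an arbitrary family of concave functions is concave, $d(\lambda) = \inf_{\mathbb{P} \in \mathcal{P}} \mathcal{L}(\mathbb{P},\lambda)$ is concave on $\R^{m+\ell}$; under Assumptions (A1)--(A3) we moreover know from the proof of Proposition \ref{prop:duality} that $d$ is proper (indeed $d = -V^*$ up to the indicator of $\R^m \times \R_+^\ell$), so this is a genuine real-valued concave function in a neighbourhood of the relevant points.

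For the superdifferential claim, I would fix $\lambda_0 \in \R^{m+\ell}$ and $\mathbb{P}_0 \in \mathcal{P}$ with $d(\lambda_0) = \mathcal{L}(\mathbb{P}_0,\lambda_0)$, and set $z := \mathbb{E}^{\mathbb{P}_0}[\zeta]$. For an arbitrary $\lambda \in \R^{m+\ell}$, the defining infimum yields $d(\lambda) \leq \mathcal{L}(\mathbb{P}_0,\lambda)$. Expanding, and using $\langle \lambda, z \rangle = \langle \lambda_0, z \rangle + \langle \lambda - \lambda_0, z \rangle$, one gets $\mathcal{L}(\mathbb{P}_0,\lambda) = \mathbb{E}^{\mathbb{P}_0}[\xi] + \langle \lambda, z \rangle = \mathcal{L}(\mathbb{P}_0,\lambda_0) + \langle z, \lambda - \lambda_0 \rangle = d(\lambda_0) + \langle z, \lambda - \lambda_0 \rangle$. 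Combining the two displays gives $d(\lambda) \leq d(\lambda_0) + \langle z, \lambda - \lambda_0 \rangle$ for every $\lambda$, which is precisely the statement that $z$ belongs to the superdifferential of the concave function $d$ at $\lambda_0$.

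There is essentially no obstacle here: the argument is a one-line consequence of the inf-of-affine-functions structure, and neither the convexity of $\mathcal{P}$ nor a minimax theorem is required. The only point deserving a moment's care is ensuring the quantities involved are finite so that the superdifferential inequality is meaningful, which is guaranteed by restricting to $\mathbb{P}_0$ attaining the infimum defining $d(\lambda_0)$ together with the properness of $d$ recorded above.
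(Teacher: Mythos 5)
Your proposal is correct and follows essentially the same route as the paper: concavity from $d$ being an infimum of affine (hence concave) functions of $\lambda$, and the superdifferential inequality from $d(\lambda) \leq \mathcal{L}(\mathbb{P}_0,\lambda) = d(\lambda_0) + \langle z, \lambda - \lambda_0\rangle$ for the minimizing $\mathbb{P}_0$. The finiteness remark you add is harmless but not needed beyond what the paper already uses.
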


\begin{proof}
The Lagrangian $\mathcal{L}$ is affine with respect to $\lambda$, therefore concave. The mapping $d(\lambda)$ is expressed as an infimum of concave functions, thus it is concave. Let $\lambda \in \R^{m + \ell}$. Let $\mathbb{P}$ be such that $d(\lambda)= \mathcal{L}(\mathbb{P},\lambda)$. For all $\lambda' \in \R^{m + \ell}$, we have
\begin{align*}
\mathcal{L}(\mathbb{P},\lambda')
= \ & \mathcal{L}(\mathbb{P},\lambda) + \langle \lambda'-\lambda, \mathbb{E}^{\mathbb{P}} [ \zeta ] \rangle
= d(\lambda) + \langle \lambda'-\lambda, z \rangle.
\end{align*}
Minimizing the left-hand side w.r.t.\@ $\mathbb{P}$, we obtain that $d(\lambda') \leq d(\lambda) + \langle \lambda'-\lambda, z \rangle$,
as was to be proved.
\end{proof}

We finish this subsection with a technical result concerning the Lipschitz continuity of the function $V$ defined in \eqref{eq:Abstract_V}.

\begin{lemma} \label{lemm:LipschitzV}
The map $V$ is Lipschitz continuous with modulus $\frac{4M}{\varepsilon}$ on $B_{\varepsilon/2}(0)$.
\end{lemma}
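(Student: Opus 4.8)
The plan is to derive the Lipschitz bound for $V$ on $B_{\varepsilon/2}(0)$ directly from the convexity of $V$ (established in Step~1 of the proof of Proposition~\ref{prop:duality}) together with the two-sided bounds on $V$ coming from Assumptions (A2) and (A3). First I would record the global lower bound $V(z)\geq -M$ for all $z\in\R^{m+\ell}$, which follows at once from (A2) since $V(z)$ is an infimum of quantities $\E^{\P}[\xi]\geq -M$. Next I would observe that on $B_\varepsilon(0)$ we in fact have $V(z)\leq M$: indeed by (A3), for any $z\in B_\varepsilon(0)$ there exists $\P\in\Pc$ feasible for the problem defining $V(z)$, and then $V(z)\leq\E^{\P}[\xi]\leq M$. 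So $|V|\leq M$ on all of $B_\varepsilon(0)$.

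The second ingredient is the elementary fact that a finite convex function that is bounded by a constant $M$ in absolute value on a ball $B_\varepsilon(0)$ is automatically Lipschitz on the smaller ball $B_{\varepsilon/2}(0)$, with an explicit modulus controlled by $M$, $\varepsilon$. Concretely: take $z^{(1)}, z^{(2)}\in B_{\varepsilon/2}(0)$ with $z^{(1)}\neq z^{(2)}$, set $u := (z^{(2)}-z^{(1)})/\|z^{(2)}-z^{(1)}\|_\infty$ and $w := z^{(2)} + \tfrac{\varepsilon}{2}u$, so that $w\in B_\varepsilon(0)$ and $z^{(2)}$ lies on the segment between $z^{(1)}$ and $w$; writing $z^{(2)} = (1-t)z^{(1)} + t w$ with $t = \|z^{(2)}-z^{(1)}\|_\infty/\big(\|z^{(2)}-z^{(1)}\|_\infty + \tfrac{\varepsilon}{2}\big)\leq \tfrac{2}{\varepsilon}\|z^{(2)}-z^{(1)}\|_\infty$, convexity of $V$ gives
\begin{equation*}
V(z^{(2)}) - V(z^{(1)}) \leq t\big(V(w) - V(z^{(1)})\big) \leq t\cdot 2M \leq \frac{4M}{\varepsilon}\,\|z^{(2)}-z^{(1)}\|_\infty.
\end{equation*}
Swapping the roles of $z^{(1)}$ and $z^{(2)}$ yields the reverse inequality, hence $|V(z^{(2)})-V(z^{(1)})|\leq \tfrac{4M}{\varepsilon}\|z^{(2)}-z^{(1)}\|_\infty$, which is the claimed Lipschitz estimate with modulus $\tfrac{4M}{\varepsilon}$.

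There is no real obstacle here; the only points requiring a little care are making sure the auxiliary point $w$ stays inside $B_\varepsilon(0)$ (it does, since $\|w\|_\infty\leq\|z^{(2)}\|_\infty + \tfrac{\varepsilon}{2}\leq\varepsilon$) and bounding $t$ correctly so that the constant comes out as $\tfrac{4M}{\varepsilon}$ rather than something larger. Alternatively, one could phrase the same argument through the subdifferential: $V$ is finite and convex on the open ball $B_\varepsilon(0)$, hence for $z\in B_{\varepsilon/2}(0)$ any subgradient $g\in\partial V(z)$ satisfies, testing against $z\pm\tfrac{\varepsilon}{2}e_i$, the bound $\|g\|_1\leq \tfrac{4M}{\varepsilon}$ from $|V|\leq M$ on $B_\varepsilon(0)$, and a convex function whose subgradients are uniformly bounded by $L$ in the dual norm on a convex set is $L$-Lipschitz there. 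I would present the first, segment-based argument since it is self-contained and avoids invoking further convex-analysis machinery.
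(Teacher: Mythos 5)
Your proof is correct and follows essentially the same route as the paper's: extend the segment from one point past the other toward the boundary of $B_{\varepsilon}(0)$, exploit the convexity of $V$ together with the two-sided bound $|V|\le M$ on $B_{\varepsilon}(0)$ coming from (A2)--(A3), and bound the interpolation parameter by $\tfrac{2}{\varepsilon}\|z^{(2)}-z^{(1)}\|_\infty$ to get the modulus $\tfrac{4M}{\varepsilon}$. The only cosmetic difference is that the paper carries out the convex combination directly at the level of probability measures (taking an arbitrary feasible $\mathbb{P}_1$ and minimizing over it at the end) instead of invoking the convexity of $V$ already established in Step 1 of Proposition \ref{prop:duality}, but the geometric idea and the constant arise identically.
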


\begin{proof}
Let $z_1$ and $z_2 \in B_{\varepsilon/2}(0)$ with $z_1 \neq z_2$. Take an arbitrary $\mathbb{P}_1 \in \mathcal{P}$ such that $\mathbb{E}^{\mathbb{P}_1} [ \zeta ] + z_1 \in \{ 0_m \} \times \R_{-}^\ell$ (whose existence is ensured by Assumption (A3)).
Consider the half-line $\{ z_1 + \theta (z_2-z_1) \,|\, \theta \geq 0 \}$. It has a unique intersection point $z$ with the boundary of $B_{\varepsilon}(0)$. Let $\theta \geq 0$ be such that $z= z_1 + \theta (z_2-z_1)$. Since $\| z \|_\infty= \varepsilon$ and $\| z_1 \|_\infty \leq \varepsilon/2$, we have $\| z-z_1 \|_\infty \geq \varepsilon /2$ and thus
\begin{equation*}
\theta
~=~ 
\frac{\| z - z_1 \|_\infty}{ \| z_2 - z_1 \|_\infty} 
~\geq~
\frac{\varepsilon}{2 \| z_2 - z_1 \|_\infty}.
\end{equation*}
We also have $\theta > 1$, since the whole segment $\{ z_1 + \theta (z_2-z_1) ~: \theta \in [0,1] \}$ is included in $B_{\varepsilon/2}(0)$.
By (A3), there exists $\mathbb{P} \in \mathcal{P}$ such that $\mathbb{E}^{\mathbb{P}} [ \zeta ] + z \in \{ 0_m \} \times \R_{-}^\ell$.
Let us define $\mathbb{P}_2 = (1-1/\theta) \mathbb{P}_1 + (1/\theta) \mathbb{P}$. Since $1/\theta \leq 1$ and since $\mathcal{P}$ is convex, $\mathbb{P} \in \mathcal{P}$. Moreover, we have $z_2= (1-1/\theta)z_1 + (1/\theta) z$, thus
$\mathbb{E}^{\mathbb{P}_2}[ \zeta ] + z_2 \in \{ 0_m \} \times \R_-^{\ell}$.
It follows that
\begin{align*}
V(z_2) \leq \mathbb{E}^{\mathbb{P}_2}[\xi]
= \mathbb{E}^{\mathbb{P}_1} [ \xi ] + \frac{1}{\theta} \big( \mathbb{E}^{\mathbb{P}}[\xi]- \mathbb{E}^{\mathbb{P}_1}[\xi] \big).
\end{align*}
Using (A2) and $1/\theta \leq 2 \| z_2-z_1 \|_\infty/\varepsilon$, we finally obtain that
\begin{equation*}
V(z_2) \leq V(z_1) + \frac{4M}{\varepsilon} \| z_2- z_1 \|_\infty,
\end{equation*}
which concludes the proof.
\end{proof}

\subsection{Reformulation of the constrained control problem on the canonical space}

In order to prove the duality result in Theorem \ref{thm:duality}, we will reformulate the continuous time and discrete time constrained control problems \eqref{eq:def_V} and \eqref{eq:def_Vh} in the framework of \eqref{eq:primal} in Section \ref{subsec:abstract_duality}.
Concretely, we will reformulate problems  \eqref{eq:def_V} and \eqref{eq:def_Vh} as optimization problems over a space of probability measures on an appropriate canonical space.
Moreover, the space of probability measures enjoys a good closeness property, 
which also plays an essential role to prove the approximation results in Theorem \ref{thm:cvg}.

Recall that $\Om := C([0,T], \R^n)$ denotes the canonical space of all $\R^n$--valued continuous paths on $[0,T]$,
with canonical process  $X = (X_t)_{0 \le t\le T}$, and canonical filtration $\F = (\Fc_t)_{0 \le t\le T}$.	
Denote by $\A$ the collection of all Borel (positive) measures $m$ on $[0,T] \times A$ whose marginal distribution on $[0,T]$ is the Lebesgue measure $ds$, i.e.,
$\A := \{m : m(ds,da) = m(s,d a)ds\}$ for a measurable family of $(m(s, d a))_{s \in [0,T]}$ of Borel probability measures on $A$.
We also consider a subset $\A_0 \subset \A$, which consists of all measures $m(ds ,da)$ such that $m(ds, da) = \delta_{\psi_s}(da) ds$ for some Borel measurable function $\psi \colon [0,T] \to A$.
Let us denote by $\Lambda$ the canonical element on $\A$ and denote 
$$\Lambda_t(\phi) := \int_0^t \int_A \phi(s,a) \Lambda(ds, da)$$ 
for any bounded Borel measurable functions defined on $[0,T] \x A$.
We then introduce a canonical filtration $\F^A = (\Fc^A_t)_{0 \le t \le T}$ on $\A$ by
$$
\Fc^A_t := \sigma \big\{ \Lambda_s(\phi)  ~: s \le t, ~\phi \in C_b([0,T] \x A) \big \}.
$$
Next, we introduce an enlarged canonical space 
$\Omb := \Om \x \A$, which inherits the canonical processes $(X, \Lambda)$,
and is equipped with the enlarged canonical filtration $\Fbb = (\Fcb_t)_{0 \le t \le T}$,
defined by $\Fcb_t := \sigma \big(X_s, \Lambda_s(\phi) ~:s \le t,~\phi \in C_b([0,T] \x U) \big)$.
Denote also $\Fcb := \Fcb_T$, and by $\Pc(\Omb)$ the set of all probability measures on $(\Omb, \Fcb)$,
equipped with the weak convergence topology.

We now introduce a Wasserstein distance on a subspace of $\Pc(\Omb)$. 
Let $\Pc([0,T] \x A)$ denote the space of all probability measures $m$ on $[0,T] \x A$,
and $\Pc_2([0,T] \x A)$ the subspace of measures $m$ such that $\int_0^T \int_A \big( \rho(a, a_0) \big)^2 m(ds, da) < \infty$,
define the Wasserstein distance $d_2$ on $\Pc_2([0,T] \x A)$ by
$$
d_2(m, m') := \Big( \inf_{\pi \in \Pi(m, m')} \int_0^T \int_A\int_0^T \int_A \big( \rho(a, a')^2 + |s - s'|^2 \big) \pi(ds, da, ds', da') \Big)^{1/2},
$$
where $\Pi(m, m')$ denotes the collection of joint distributions on $([0,T] \x A) \x ([0,T] \x A)$ with marginal distribution $m$ and $m'$.
We can then extend the Wasserstein distance $d_2$ on $\A_2 := \{ m \in \A ~: m/T \in \Pc_2([0,T] \x A) \}$ by
$$
d_{\A_2} (m, m') := d_2(m/T, m'/T), ~~\mbox{for all}~m, m' \in \A_2.
$$
Now, let us denote $\Pc_2(\Omb)$ the set of all $\Pb \in \Pc(\Omb)$, such that
\begin{equation} \label{eq:def_Pcb2}
\E^{\Pb} \Big[ \|X \|^2 + \int_0^T \int_A \rho(a_0, a)^2 \Lambda(ds, da)  \Big] < \infty.
\end{equation}
It is easy to check that $\Lambda \in \A_2$, $\Pb$--a.s.\@ for all $\Pb \in \Pc_2(\Omb)$.
We then introduce the Wasserstein distance $\Wc_2$ on $\Pc_2(\Omb)$ by
\begin{equation} \label{eq:def_Wc2}
\Wc_2(\Pb, \Pb') 
~:=~ 
\Big( \inf_{\pi \in \Pi(\Pb, \Pb')}
\int_{\Omb} \int_{\Omb}
\big( \|\om - \om'\|^2 + d_{\A_2}^2(m, m') \big) 
 \pi \big(d (\om, m), d (\om', m') \big) 
\Big)^{1/2}.
\end{equation}

\paragraph{Reformulation of the continuous time control problem}

Following El Karoui, Huu Nguyen and Jeanblanc \cite{EKJ}, we can in fact reformulate the control problem on the canonical space by the martingale problem.
Recall the definition of strong and weak control terms in Definitions \ref{def:weak_ctrl} and \ref{def:strong_ctrl}.
Given a function $\varphi \in C^{2}_{b}(\mathbb{R}^n)$,
let us define an $\Fbb$-adapted continuous function $(C_t(\varphi))_{t \in [0,T]}$ by
\beno
C_t(\varphi) := \varphi(X_t) - \int_{0}^{t}\int_A \Lc \varphi(s, X_{s \wedge \cdot}, a, X_{s}) \Lambda(ds,da),
\eeno
with
\beno
\Lc \varphi(s, \om, a, x)
~:=~ 
\mu(t, \om_{t\wedge \cdot}, a) \cdot D \varphi (x) 
+
\frac{1}{2} \mbox{Tr} \Big[ \sigma \sigma^{T}(t, \om_{t\wedge \cdot}, a) D^{2} \varphi (x) \Big].
\eeno

\begin{definition}
$\mathrm{(i)}$ A probability measure $\Pb$ on $(\Omb, \Fcb)$ is called a relaxed control rule if
\begin{equation} \label{eq:PcR_integrability}
\Pb [ X_0 = x_0] = 1,
~~~
\E^{\Pb} \Big[ \|X \|^2 +  \int_0^T \int_A \rho(a_0, a)^2 \Lambda(ds, da)  \Big] < \infty,
\end{equation}
and the process $(C_t(\varphi))_{t \in [0,T]}$ is a $(\Fbb, \Pb)$-martingale for all functions $\varphi \in C^{2}_{b}(\mathbb{R}^{n})$. \\[1mm]
\noindent $\mathrm{(ii)}$ A  probability measure $\Pb$ is called a weak control rule if there exists a weak control term $\gamma \in \Gamma$ such that $\Pb = \P^{\gamma} \circ \big(X^{\gamma}, \Lambda^{\gamma} \big)^{-1}$ with $\Lambda^{\gamma}(ds ,da) := \delta_{\alpha^{\gamma}_s}(da) ds$. \\[1mm]
\noindent $\mathrm{(iii)}$ A weak control rule $\Pb$ is called a strong control rule if it is induced by a strong control term  $\gamma \in \Gamma_S$,
and it is called a piecewise strong control rule if $\gamma \in \Gamma_{S,0}$.
\end{definition}

Let us denote
\begin{align} 
& \quad \Pcb_R ~\mbox{(resp.}~\Pcb_W, ~\Pcb_S, ~\Pcb_{S,0} \mbox{)} \notag \\
& \quad \qquad :=
\big\{ \mbox{All relaxed (resp.\@ weak, strong, piecewise strong) control rules} \big\},
\label{eq:def_Pcb}
\end{align}
and, for $z \in \R^{m + \ell}$,
\begin{align*}
\Pcb_R(z) 
:= 
\big\{
\Pb \in \Pcb_R ~: \ &
\E^{\Pb}[ \Psi_i(X_{\cdot})] = z_i,
~i=1, \cdots,m, \\
& \E^{\Pb}[ \Psi_{m+j}(X_{\cdot})] \le z_{m+j},
~j=1, \cdots, \ell
 \big\},
\end{align*}
and
$$
\Pcb_W(z) := \Pcb_W \cap \Pcb_R(z).
$$
Consequently, one can redefine $V(z)$ in \eqref{eq:def_V}, and $d(\lambda)$ in \eqref{eq:def_D} by
\be \label{eq:redef_V}
V(z) = \sup_{\Pb \in \Pcb_W(z)} \E^{\Pb} \big[ \Phi( X_{\cdot}) \big],
~~\mbox{and}~
d(\lambda) 
= 
\sup_{\Pb \in \Pcb_W} \E^{\Pb} \big[ \Phi( X_{\cdot}) + \lambda \cdot \Psi(X_{\cdot}) \big].
\ee
Notice that the integrability condition \eqref{eq:def_Pcb2} implies that  $\Pcb_R \subset \Pc_2(\Omb)$.

\begin{lemma} \label{lemm:PS_to_PR}
$\mathrm{(i)}$ Both sets $\Pcb_W$ and $\Pcb_R$ are convex, and 
\begin{equation} \label{eq:Redef_PW}
\Pcb_W ~=~ \big\{ \Pb \in \Pcb_R ~: \P[ \Lambda \in \A_0 ] = 1 \big\},
\end{equation}
and  $\Pcb_{S,0} \subset \Pcb_S \subset \Pcb_W \subset \Pcb_R$. \\[1mm]
\noindent $\mathrm{(ii)}$ Let Assumption \ref{assum:approximation} hold.
Then $\Pcb_{S,0}$ is dense in $\Pcb_R$ under the Wasserstein $\Wc_2$ distance.
\end{lemma}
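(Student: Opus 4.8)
The plan is to prove item (i) by mostly classical martingale-problem arguments, then to spend most of the effort on the density claim in item (ii). For (i), convexity of $\Pcb_R$ follows immediately from the definition: the martingale property of $(C_t(\varphi))_{t}$ is preserved under convex combinations of probability measures because $\E^{\theta \Pb_1 + (1-\theta)\Pb_2}[\,\cdot\,] = \theta \E^{\Pb_1}[\,\cdot\,] + (1-\theta)\E^{\Pb_2}[\,\cdot\,]$, and likewise the integrability condition \eqref{eq:PcR_integrability} and the initial condition $X_0 = x_0$ are stable under mixtures. For the identity \eqref{eq:Redef_PW}, the inclusion $\subseteq$ is clear since a weak control term gives $\Lambda^\gamma(ds,da) = \delta_{\alpha^\gamma_s}(da)\,ds \in \A_0$; the reverse inclusion is the classical equivalence between the martingale problem and the existence of a weak solution to the SDE \eqref{eq:SDE_strong} --- when $\Lambda = \delta_{\psi_s}(da)\,ds$ for a measurable $\psi$, the martingale problem reduces to the standard one, and one invokes a measurable selection / Itô-representation argument (as in El Karoui--Huu Nguyen--Jeanblanc \cite{EKJ}) to build, on a possibly enlarged space, a Brownian motion $B^\gamma$ such that \eqref{eq:SDE_strong} holds; the growth condition \eqref{eq:growth_condition} together with the moment bound \eqref{eq:PcR_integrability} guarantees well-posedness of the stochastic integrals. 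The chain of inclusions $\Pcb_{S,0} \subset \Pcb_S \subset \Pcb_W \subset \Pcb_R$ is then immediate from the definitions.

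For item (ii), the strategy is a two-stage approximation: first approximate an arbitrary relaxed control rule $\Pb \in \Pcb_R$ by \emph{weak} control rules (i.e. pass from a relaxed measure-valued control $\Lambda$ to a genuine $A$-valued process), and then approximate a weak control rule by \emph{piecewise strong} control rules. For the first stage, I would use the classical \emph{chattering lemma}: given the relaxed control $\Lambda(ds,da) = m_s(da)\,ds$, one constructs $A$-valued processes $\alpha^k$ whose occupation measures $\delta_{\alpha^k_s}(da)\,ds$ converge to $m_s(da)\,ds$, using the compactness of $A$ (Assumption \ref{assum:approximation}(i)) to make this uniform; continuity of $(\mu,\sigma)$ in $a$ then yields stability of the SDE solution, and continuity plus the quadratic growth bound on $\Phi, \Psi_i$ gives convergence of the relevant expectations. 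Crucially, one checks that this convergence takes place in the Wasserstein $\Wc_2$ sense: the $X$-component converges in $L^2$ by a Gronwall/BDG estimate using the Lipschitz bound \eqref{eq:Lip}, and the $\A_2$-component converges because $d_{\A_2}$ metrizes weak convergence of occupation measures on the compact $[0,T]\times A$ combined with the uniform second-moment bound. For the second stage, I would discretize the weak control in time --- freezing $\alpha$ on each subinterval $[t_i, t_{i+1})$ at a suitably conditioned value --- and approximate the driving filtration by the Brownian one, again using \eqref{eq:Lip} and dominated convergence to control the error; this produces piecewise strong control terms whose induced rules converge to the given weak rule in $\Wc_2$.

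The main obstacle, I expect, is the second stage: passing from a general \emph{weak} control rule, where the control process need not be adapted to the Brownian filtration, to a \emph{piecewise strong} one, while simultaneously keeping control of the $\Wc_2$ distance. The subtlety is that one cannot simply mollify or condition the control without breaking adaptedness, and the approximation must be carried out on a common or carefully coupled probability space so that the Wasserstein bound --- not merely weak convergence of laws --- is obtained; one standard route is to use the martingale representation to rewrite $X^\gamma$ in terms of its own Brownian motion, then approximate the control by a functional of finitely many increments of that Brownian motion via a conditional-expectation/regular-conditional-distribution argument, and finally invoke the stability estimate for the SDE under \eqref{eq:Lip}. A secondary technical point is verifying that all approximating measures remain in $\Pcb_R$ (i.e. still solve the martingale problem) and satisfy the uniform moment bound \eqref{eq:PcR_integrability}, which is needed both for tightness and for the convergence of the quadratically growing functionals; this is handled by a uniform-in-$k$ Gronwall estimate using \eqref{eq:growth_condition}.
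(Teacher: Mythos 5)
Your treatment of item (i) is essentially the paper's: convexity of $\Pcb_R$ by linearity of the expectation under mixtures, the inclusion $\subseteq$ in \eqref{eq:Redef_PW} by pushing forward a weak control term, the reverse inclusion by the classical martingale-problem-to-weak-solution construction on an enlarged space (as in El Karoui--Huu Nguyen--Jeanblanc), and the chain of inclusions by definition. One small omission: the lemma also asserts convexity of $\Pcb_W$, which you never address explicitly; as in the paper, it follows at once from \eqref{eq:Redef_PW} together with convexity of $\Pcb_R$, since the event $\{\Lambda \in \A_0\}$ keeps probability one under convex combinations. Where you genuinely diverge is item (ii): the paper does not prove the density claim at all but delegates it to the literature (the relaxed-to-weak approximation of \cite{EKJ} under the weak topology, and Theorem 3.1 of \cite{DPT} for the $\Wc_2$-density of piecewise strong rules, which comes with an explicit construction), whereas you sketch the argument directly: a chattering step replacing $\Lambda$ by occupation measures of $A$-valued processes, then time discretization and conditioning on finitely many Brownian increments to reach piecewise strong rules, with Gronwall/BDG stability under \eqref{eq:Lip}, the uniform moment bound \eqref{eq:PcR_integrability}, and compactness of $A$ to upgrade weak convergence to $\Wc_2$. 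This is indeed the route the cited reference follows, and you correctly single out the delicate point, namely producing controls adapted to the Brownian filtration while retaining a Wasserstein rather than merely weak bound; but at the level of detail given, that second stage remains a plan rather than a proof --- carrying it out (the coupling, the measurable construction of the conditioned control, the verification that the approximants stay in $\Pcb_R$ with uniform moments) is precisely the content of the cited Theorem 3.1, so you should either complete that construction or cite it, as the paper chooses to do.
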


\begin{proof}
$\mathrm{(i)}$ We first prove \eqref{eq:Redef_PW}.
Given a weak control term $\gamma \in \Gamma$, and let $\Pb  := \P^{\gamma} \circ (X^{\gamma}, \Lambda^{\gamma})^{-1}$ with $\Lambda^{\gamma} := \delta_{\alpha^{\gamma}_s}(da) ds$,
it is straightforward to check that $\Pb \in \Pcb_R$ and satisfies $\Pb[ \Lambda \in \A_0] = \P^{\gamma} [ \Lambda^{\gamma} \in \A_0 ] = 1$.
On the other hand, let $\Pb \in \Pcb_R$ be such that  $\Pb[ \Lambda \in \A_0] =1$, one can construct on a possible enlarged space of $\Omb$, a $\Fbb$--predictable process $\alpha$ and a Brownian motion $W^{\Pb}$ 
such that $\Pb[ \Lambda(da, ds) = \delta_{\alpha_s}(da) ds] =1$ and
$$
X_t = x_0 + \int_0^t \mu(s, X_s, \alpha_s) ds + \int_0^t \sigma(s, X_s, \alpha_s) dW^{\Pb}_s, ~~\Pb \mbox{--a.s.}
$$
It follows that the term $(\Omb, \Fcb, \Fbb, \Pb, X, \alpha, W^{\Pb})$ is a weak control term in Definition \ref{def:Control_h}, and hence $\Pb \in \Pcb_W$.
This proves the equality in \eqref{eq:Redef_PW}.

We next prove that $\Pcb_R$ is convex. Let $\Pb_1, \Pb_2 \in \Pcb_R$, and $\Pb = \theta \Pb_1 + (1-\theta) \Pb_2$ for some $\theta \in [0,1]$.
Then it is clear that $\Pb$ satisfies \eqref{eq:PcR_integrability} as $\Pb_1$ and $\Pb_2$.
Further, as $(C_t(\varphi))_{t \in [0,T]}$ is a $\Fbb$--martingale under both $\Pb_1$ and $\Pb_2$, one has, for all $s \le t$ and $\Fcb_s$--measurable bounded r.v. $\xi$,
\begin{align*}
\E^{\Pb} \big[ \big( C_t(\varphi) - C_s(\varphi) \big) \xi \big] = \ & \theta \E^{\Pb_1} \big[ \big( C_t(\varphi) - C_s(\varphi) \big) \xi \big]  + (1-\theta) \E^{\Pb_2} \big[ \big( C_t(\varphi) - C_s(\varphi) \big) \xi \big] \\
 = \ & 0.
\end{align*}
Thus $(C_t(\varphi))_{t \in [0,T]}$ is also a $\Fbb$--martingale under $\Pb$, and hence $\Pb \in \Pcb_R$.
Then $\Pcb_R$ is convex.
Using \eqref{eq:Redef_PW} and the convexity of $\Pcb_R$, it follows that $\Pcb_W$ is also convex.
Finally, the inclusion relation  $\Pcb_{S,0} \subset \Pcb_S \subset \Pcb_W \subset \Pcb_R$ is trivial by their definitions and \eqref{eq:Redef_PW}. \\[1mm]
\noindent $\mathrm{(ii)}$
Finally, the approximation of a relaxed control rule by weak control rules  is a classical result as illustrated by El Karoui, Huu Nguyen and Jeanblanc \cite{EKJ} under the weak convergence topology.
For the density of $\Pcb_{S, 0}$ in $\Pcb_R$ under $\Wc_2$, 
we can refer e.g.\@ to Theorem 3.1 of Djete, Possamaï and Tan \cite{DPT} for a proof with explicit construction.
\end{proof}

\paragraph{Reformulation of the discrete time control problem}

Notice that a discrete time process on grid $0=t_0 < \cdots < t_N = T$ 
can be considered as a continuous time piecewise constant process on $[0,T]$.
Concretely, let
\begin{equation*}
\gamma = \big( \Om^{\gamma}, \Fc^{\gamma}, \F^{\gamma}, \P^{\gamma}, \alpha^{\gamma}, X^{\gamma}, U^{\gamma}_k, k=1, \cdots, N \big) \in \Gamma_h
\end{equation*}
be a weak discrete time control term,
we (re-)define $(\Xh^{\gamma}, \alpha^{\gamma}, \Lambda^{\gamma})$ as processes on $[0,T]$ by
$$
\Xh^{\gamma}_{s} := (t_{k+1}-s) X^{\gamma}_{t_k} + (s-t_k)  X^{\gamma}_{t_{k+1}},
~~
\alpha^{\gamma}_s := \alpha^{\gamma}_{t_k},
~~\mbox{for all}~s \in [t_k, t_{k+1}),
$$
and $\Lambda^{\gamma}(ds, da) := \delta_{\alpha^{\gamma}_s}(da) ds$.
Denote
\begin{align*}
& \Pcb^h_W
:=
\big\{
\Pb^h := \P^{\gamma} \circ \big(\Xh^{\gamma}, \Lambda^{\gamma} \big)^{-1} 
~:\gamma \in \Gamma_h
\big\}, \\
& \Pcb^h_W(z) 
:= 
\big\{
\Pb^h := \P^{\gamma} \circ \big(\Xh^{\gamma}, \Lambda^{\gamma} \big)^{-1} 
~:\gamma \in \Gamma_h(z)
 \big\},
\end{align*}
so that $V_h$ and $d_h$ in \eqref{eq:def_Vh} and \eqref{eq:dual_h} can be redefined by
\be \label{eq:redef_Vh}
\qquad \quad V_h(z) = \sup_{\Pb^h \in \Pcb^h_W(z)} \E^{\Pb^h} \big[ \Phi( X_{\cdot}) \big]
~~~\mbox{and}~~
d_h(\lambda) 
= 
\sup_{\Pb^h \in \Pcb^h_W} \E^{\Pb^h} \big[ \Phi( X_{\cdot}) + \lambda \cdot \Psi(X_{\cdot}) \big].
\ee

\begin{lemma} \label{lemm:DiscrRelaxForm}
The set $\Pcb^h_W$ is convex.
\end{lemma}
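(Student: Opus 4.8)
The plan is to show convexity of $\Pcb^h_W$ by combining two probability measures using an independent coin flip and checking that the mixture still arises from a weak discrete time control term in the sense of Definition \ref{def:Control_h}. First I would take $\Pb^h_1, \Pb^h_2 \in \Pcb^h_W$, coming from weak discrete time control terms $\gamma_1, \gamma_2 \in \Gamma_h$, and fix $\theta \in [0,1]$. The naive guess is that $\theta \Pb^h_1 + (1-\theta) \Pb^h_2 \in \Pcb^h_W$; to realize this as a genuine control term I would build the disjoint union construction: on $\Om^{\gamma} := \Om^{\gamma_1} \sqcup \Om^{\gamma_2}$ (or more conveniently on a product space $\{1,2\} \times \Om^{\gamma_1} \times \Om^{\gamma_2}$ carrying an independent Bernoulli$(\theta)$ variable $\iota$ together with independent copies of the two systems and independent fresh uniforms), define $\P^{\gamma}$ as the $\theta$-mixture, and set $\alpha^{\gamma} := \alpha^{\gamma_\iota}$, $X^{\gamma} := X^{\gamma_\iota}$, $U^{\gamma} := U^{\gamma_\iota}$, with the filtration $\F^{\gamma}_{t_k}$ enlarged to contain $\iota$ from time $0$.

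The key steps, in order, are: (1) verify the mixture space is a filtered probability space and that $\iota$ is $\Fc^{\gamma}_0$-measurable; (2) check that for each $k$, $U^{\gamma}_{t_k}$ is uniform on $[0,1]$ and independent of $\Fc^{\gamma}_{t_{k-1}}$ — here the point is that conditionally on $\{\iota = i\}$ the variable $U^{\gamma}_{t_k}$ is just $U^{\gamma_i}_{t_k}$, which by hypothesis is uniform and independent of $\Fc^{\gamma_i}_{t_{k-1}}$, and since this holds for both values of $\iota$ with the same (uniform) conditional law, the unconditional law is uniform and the conditional independence given $\Fc^{\gamma}_{t_{k-1}} \supseteq \sigma(\iota)$ is preserved; (3) check the dynamics \eqref{eq:X_gamma_dynamic}, which holds on each $\{\iota = i\}$ by definition since $\widehat X^{\gamma}_{t_k \wedge \cdot} = \widehat X^{\gamma_i}_{t_k \wedge \cdot}$ there and $H_h$ is non-anticipative; (4) conclude $\gamma \in \Gamma_h$ and that $\P^{\gamma} \circ (\widehat X^{\gamma}, \Lambda^{\gamma})^{-1} = \theta \Pb^h_1 + (1-\theta)\Pb^h_2$, so the mixture lies in $\Pcb^h_W$.

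The main obstacle — really the only delicate point — is step (2): one must be careful that enlarging the filtration to include $\iota$ at time $0$ does not destroy the required conditional uniformity and independence of the noise variables $U^{\gamma}_{t_k}$. This is handled precisely because the conditional law of $U^{\gamma}_{t_k}$ given $\Fc^{\gamma}_{t_{k-1}}$ is the same (namely $\Uc[0,1]$) on both atoms $\{\iota=1\}$ and $\{\iota=2\}$, so conditioning further on $\iota$ changes nothing. Everything else is a routine bookkeeping check that the pasted object satisfies Definition \ref{def:Control_h}, so I would state these verifications briefly. (One may alternatively phrase the whole argument at the level of the canonical-space measures on $\Omb$, noting that $\Pcb^h_W$ is the image under a fixed measurable map of a set of discrete-time control laws, and that mixing commutes with pushforward; but the explicit term-level construction is the most transparent.)
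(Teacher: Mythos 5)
Your proof is correct, but it takes a different route from the paper's. The paper does not paste control terms together: it lifts everything to an enlarged canonical space $\Omh := \Omb \x A^N \x [0,1]^N$ carrying $(X,\Lambda,\alpha,U)$, proves an intrinsic characterization of the corresponding set of laws $\Pch^h_W$ (namely $\Ph[X_0=x_0]=1$ together with the almost-sure recursion, the linear interpolation, $\Lambda(da,ds)=\delta_{\alpha_s}(da)ds$, and $U_k\sim\Uc[0,1]$ independent of $\Fch_{t_{k-1}}$, i.e.\@ \eqref{eq:property_Pch}), and then simply observes that these conditions are stable under convex combinations of probability measures, so that $\Pch^h_W$, and hence its projection $\Pcb^h_W$, is convex. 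The validity of that last step rests on exactly the point you isolate as delicate: independence of $U_k$ from $\Fch_{t_{k-1}}$ survives mixing only because the law of $U_k$ is the \emph{same} (uniform) under both measures, so the factorization $\E^{\Ph}[f(U_k)Z]=\E_{\Uc[0,1]}[f]\,\E^{\Ph}[Z]$ passes to the mixture; your conditional-law argument on the atoms $\{\iota=1\},\{\iota=2\}$ is the term-level version of the same computation. What each approach buys: the paper's characterization avoids constructing any new filtered space and is the discrete-time analogue of the martingale-problem reformulation used for $\Pcb_R$ (Lemma \ref{lemm:PS_to_PR}), making it reusable elsewhere in the convergence analysis; your explicit Bernoulli-randomization construction is more elementary and makes transparent the interpretation of a convex combination as an initial randomization over controls, at the cost of the bookkeeping you describe (enlarging the filtration by $\iota$ at time $0$ and rechecking adaptedness and the independence of the $U_k$'s with respect to the enlarged filtration, which you handle correctly via the product-measure structure). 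Either argument is acceptable.
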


\begin{proof}
Let us consider an enlarged canonical space $\Omh := \Omb \x A^N \x [0,1]^N$ equipped with canonical element $(X, \Lambda, \alpha, U)$ with $\alpha = ( \alpha_0, \cdots, \alpha_{N-1})$ and $U = (U_1, \cdots, U_N)$.
The canonical filtration $\widehat \F = (\widehat \Fc_t)_{0 \le t \le T}$ is defined by
\begin{equation*}
\widehat \Fc_t := \sigma \big(X_s, \Lambda_s(\phi), \alpha_{t_i}, U_i ~: s \le t, \phi \in C_b([0,T] \x A), t_i \le t \big).
\end{equation*}
We define 
$$
\Pch^h_W
:=
\big\{
\P^{\gamma} \circ \big(X^{\gamma}, \Lambda^{\gamma}, \alpha^{\gamma}, U^{\gamma} \big)^{-1} 
~:\gamma \in \Gamma_h
\big\},
~~\mbox{so that}~
\Pcb^h_W = \big\{ \Ph|_{\Omb} ~: \Ph \in \Pch^h_W \big\}.
$$
Then to conclude the proof, it is enough to prove that $\Pch^h_W$ is convex, which implies immediately that $\Pcb^h_W$ is convex.

Let us re-define the process $\alpha$ on $[0,T]$ by $\alpha_s := \alpha_{t_k}$ for all $s \in [t_k, t_{k+1})$ for each $k=0, \cdots, N-1$.
We claim that $\Ph \in \Pch^h_W$ if and only if $\Ph[X_0= x_0] = 1$, and for each $k=1, \cdots ,N$, 
\begin{equation} \label{eq:property_Pch}
\begin{cases}
 X_{t_k} = X_{t_{k-1}} H_h(t_{k-1}, X_{t_{k-1} \wedge \cdot}, \alpha_{k-1}, U_k),\\
X_s = (t_{k+1}-s) X_{t_k} + (s - t_k) X_{t_{k+1}}, ~~s \in [t_k, t_{k+1}], \\
\Lambda(da, ds) = \delta_{\alpha_s}(da) ds, 
~\mbox{and}~U_k \sim \Uc[0,1]~\mbox{is independent of}~\Fch_{t_{k-1}},
\end{cases}
\Ph\mbox{--a.s.}
\end{equation}
Indeed, for every  $\gamma \in \Gamma_h$ and $\Ph = \P^{\gamma} \circ \big(X^{\gamma}, \Lambda^{\gamma}, \alpha^{\gamma}, U^{\gamma} \big)^{-1}$, 
it is straightforward to check that $\Ph$ satisfies $\Ph[X_0= x_0] = 1$ and \eqref{eq:property_Pch}.
Next, let $\Ph$ be a probability measure on $\Omh$ satisfying  $\Ph[X_0= x_0] = 1$ and \eqref{eq:property_Pch},
then it is easy to check that the term $(\Omh, \Fch, \widehat \F, \Ph, (X_{t_k})_{k=0, \cdots, N}, \alpha, U)$ consists of a discrete time weak control term,
and hence $\Ph \in \Pch^h_W$.

We finally prove that $\Pch^h_W$ is convex. Let $\Ph_1, \Ph_2 \in \Pch^h_W$, and $\Ph := \theta \Ph_1 + (1-\theta) \Ph_2$ for some $\theta \in [0,1]$.
Then $\Ph[X_0= x_0] = \theta \Ph_1[X_0= x_0] + (1-\theta) \Ph_2[X_0= x_0] = 1$.
Moreover, \eqref{eq:property_Pch} holds true under $\Ph$, as it does under $\Ph_1$ and $\Ph_2$.
Thus $\Ph \in \Pch^h_W$, which implies that $\Pch^h_W$ is convex, and so is $\Pcb^h_W$.
\end{proof}

\subsection{Proof of the duality results in Theorem \ref{thm:duality}}

Recall that $\Pcb_W$ and $\Pcb_W^h$ are convex by Lemmas \ref{lemm:PS_to_PR} and \ref{lemm:DiscrRelaxForm},
and $(V, d)$  and $(V_h, d_h)$ can be reformulated in \eqref{eq:redef_V} and \eqref{eq:redef_Vh} as in the framework of \eqref{eq:primal} in Section \ref{subsec:abstract_duality}.
Then Theorem \ref{thm:duality} follows directly by
Proposition \ref{prop:duality}, Corollary \ref{coro:charact_dual_optimizer_abstr}, and Lemma \ref{lemma:sufficiency}.

\subsection{Proof of the approximation results in Theorem \ref{thm:cvg}}
\label{subsec:proof_cvg}

To prove the general convergence result in Theorem \ref{thm:cvg}, we will adapt the classical ``weak convergence'' arguments of Kushner and Dupuis \cite{KushnerDupuis} in our constrained context.
In fact, we will work under the Wasserstein $\Wc_2$ distance.
Note that Assumption \ref{assum:approximation} is in force in this subsection.
Let us provide the proof in 4 steps.

\paragraph{Step 1} Recall that $\Pcb_R(z)$ is defined below \eqref{eq:def_Pcb}, we will first prove that
\be \label{eq:def_VR}
V(0) = V_R(0),
~~\mbox{where}~
V_R (z)
:= 
\inf_{\Pb \in \Pcb_R(z)} \mathbb{E}^{\Pb}[\Phi(X_{\cdot})].
\ee

\begin{lemma} \label{lemm:Vb_Lip}
On $B_{\eps/2}(0)$, the functions $V_R(\cdot)$, $V(\cdot)$, and $V_h(\cdot)$, for all $h > 0$, are Lipschitz with Lipschitz constant $\frac{4M}{\eps}$.
\end{lemma}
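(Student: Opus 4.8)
The plan is to exhibit, for each of the three value functions, the abstract setup of Section \ref{subsec:abstract_duality} and then simply invoke Lemma \ref{lemm:LipschitzV}. For $V_R$, take the measurable space $(\Omb, \Fcb)$, the convex set $\Pc = \Pcb_R$ (which is convex by Lemma \ref{lemm:PS_to_PR}(i)), the random variable $\xi = \Phi(X_\cdot)$, and $\zeta_i = \Psi_i(X_\cdot)$. Then $V_R(z)$ is exactly the function $V(\cdot)$ of \eqref{eq:Abstract_V}, so it suffices to check assumptions (A1)--(A3). (A1) is the integrability part of Assumption \ref{assum:qualification}(i) (extended to $\Pcb_R$ via the boundedness of $\Phi, \Psi_i$ by $K(1+\|\om\|^2)$ in Assumption \ref{assum:approximation}(i) together with \eqref{eq:def_Pcb2}); (A2) is the bound $M$ in Assumption \ref{assum:qualification}(i), again extended to $\Pcb_R \supset \Pcb_W$ — here one must note that the supremum over the larger set $\Pcb_R$ is still finite, which follows because every element of $\Pcb_R$ can be $\Wc_2$-approximated by elements of $\Pcb_W$ (Lemma \ref{lemm:PS_to_PR}(ii)) and $\Phi$ has quadratic growth, so the uniform-in-$\Gamma$ bound passes to the closure; (A3) is the qualification condition \eqref{eq:qualification_conti_time}, which is stated for $\Gamma$ (hence $\Pcb_W$) and a fortiori holds for the larger set $\Pcb_R$. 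Lemma \ref{lemm:LipschitzV} then gives the $\tfrac{4M}{\eps}$-Lipschitz bound of $V_R$ on $B_{\eps/2}(0)$.

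For $V(\cdot)$ itself, the same argument applies verbatim with $\Pc = \Pcb_W$ in place of $\Pcb_R$: $\Pcb_W$ is convex by Lemma \ref{lemm:PS_to_PR}(i), and (A1)--(A3) hold directly as stated in Assumptions \ref{assum:qualification} and \ref{assum:approximation} (indeed they are phrased in terms of $\Gamma$, i.e.\@ $\Pcb_W$). For $V_h(\cdot)$, take $\Pc = \Pcb^h_W$, convex by Lemma \ref{lemm:DiscrRelaxForm}, with $\xi = \Phi(X_\cdot)$, $\zeta_i = \Psi_i(X_\cdot)$ on the same canonical space; now (A2) is the second uniform bound $\sup_{\gamma_h}|\E^{\P^{\gamma_h}}[\Phi(\Xh^{\gamma_h}_\cdot)]| \le M$ of Assumption \ref{assum:qualification}(i), and (A3) is \eqref{eq:qualification_discrete_time}. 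Crucially the constants $M$ and $\eps$ are the \emph{same} for all three problems (they are built into Assumption \ref{assum:qualification}), so Lemma \ref{lemm:LipschitzV} yields the uniform modulus $\tfrac{4M}{\eps}$ independent of $h$.

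The only genuinely non-routine point is the extension of the bound (A2) from $\Pcb_W$ to $\Pcb_R$ needed for the $V_R$ case: one must argue that relaxed control rules do not enlarge the range of $\E^{\Pb}[\Phi(X_\cdot)]$. The clean way is to note that any $\Pb \in \Pcb_R$ is a $\Wc_2$-limit of $\Pb_n \in \Pcb_{S,0} \subset \Pcb_W$ by Lemma \ref{lemm:PS_to_PR}(ii), that $\Wc_2$-convergence forces $\sup_n \E^{\Pb_n}[\|X\|^2] < \infty$ and $\E^{\Pb_n}[\Phi(X_\cdot)] \to \E^{\Pb}[\Phi(X_\cdot)]$ (using continuity and quadratic growth of $\Phi$ from Assumption \ref{assum:approximation}(i)), and hence $|\E^{\Pb}[\Phi(X_\cdot)]| \le M$ as well; similarly for the integrability (A1). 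Everything else is a direct citation of Lemma \ref{lemm:LipschitzV} applied three times. I would expect the write-up to be short: set up the dictionary, verify (A1)--(A3) in each case, quote the lemma.
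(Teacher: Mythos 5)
Your proposal is correct and follows essentially the same route as the paper: reformulate $V_R$, $V$, $V_h$ in the abstract framework of Section \ref{subsec:abstract_duality} with $\Pc = \Pcb_R$, $\Pcb_W$, $\Pcb^h_W$ respectively, check (A1)--(A3) with the common constants $M$ and $\eps$, and invoke Lemma \ref{lemm:LipschitzV}. The only difference is that you spell out the density/quadratic-growth argument transferring the bound $M$ from $\Pcb_W$ to $\Pcb_R$, which the paper leaves as ``easy to check'' after citing the same $\Wc_2$-density from Lemma \ref{lemm:PS_to_PR}.
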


\begin{proof}
Notice that $\Pcb_R$ is convex, and $\Pcb_W$ is dense in $\Pcb_R$ under $\Wc_2$.
Then it is easy to check that the set $\{ \Pb|_{\Om} ~: \Pb \in \Pcb_R\}$ satisfies all the preliminary conditions in Section \ref{subsec:abstract_duality}, as $\{\Pb|_{\Om} ~:\Pb \in \Pcb_W\}$ does.
Then the Lipschitz property of $V_R$, $V$ and $V_h$ follows by Lemma \ref{lemm:LipschitzV}.
\end{proof}

To prove the equality $V_R(0) = V(0)$ in \eqref{eq:def_VR},
we first notice that  $\Pcb_W(0) \subset \Pcb_R(0)$, and hence $V_R(0) \le V(0)$.

Next, let us fix $\delta > 0$, and $\Pb \in \Pcb_R(0)$ be such that $\E^{\Pb}[ \Phi(X)] \ge V_R(0) - \delta$.
By the density of $\Pcb_W$ in $\Pcb_R$ and the growth condition of $\Phi$ as well as $\Psi_i$ in Assumption \ref{assum:approximation},
there exists some $\Pb' \in \Pcb_W$ such that
$$
\big| \E^{\Pb'} [\Phi(X)] - \E^{\Pb}[ \Phi(X)] \big|  \le \delta,
~~\mbox{and}~
\big| \E^{\Pb'} [\Psi_i(X)] - \E^{\Pb}[ \Psi_i(X)] \big|  \le \delta,
~i=1, \cdots, m+\ell.
$$
This implies that for some $z \in B_{\delta}(0)$, one has $V(z) \le \E^{\Pb'}[\Phi(X)] \le V_R(0) + 2 \delta$.
As $\delta > 0$ is arbitrary, and $V_R$ and $V$ are both Lipschitz on $B_{\eps/2}(0)$,
it follows that $V(0) \le V_R(0)$,
and we hence conclude that $V_R(0) = V(0)$.

\paragraph{Step 2} We next prove that $\liminf_{h \to 0} V_h(0) \ge  V(0) $.
Let $(h_n)_{n \ge 1}$ be a sequence of positive real numbers such that $\lim_{n \to \infty} h_n = 0$ 
and $(\Pb_{n})_{n \ge 1}$ be a sequence such that  $\Pb_n \in \Pcb_W^{h_n}(0)$ for all $n \ge 1$, and
$$
\liminf_{h \to 0} \ V_h(0)
~=~
\lim_{n \to \infty} \E^{\Pb_{n}} \big[ \Phi \big( X_{\cdot} \big) \big].
$$

\begin{lemma} \label{eq:limit_martingale_pb}
The sequence $(\Pb_n)_{n \ge 1}$ is relatively compact under $\Wc_2$.
Moreover, any limit point of $(\Pb_n)_{n \ge 1}$ lies in $\Pcb_R(0)$.
\end{lemma}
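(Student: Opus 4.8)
The plan is to establish the two claims of Lemma~\ref{eq:limit_martingale_pb} in turn: first the relative compactness of $(\Pb_n)_{n \ge 1}$ under $\Wc_2$, and then the identification of any limit point as an element of $\Pcb_R(0)$. For the compactness, I would begin by deriving a uniform moment estimate. Using the dynamics \eqref{eq:X_gamma_dynamic} together with Assumption~\ref{assum:approximation}$\mathrm{(ii)}$ (the matching of conditional mean and variance of $H_h$ with $\mu h$ and $\sigma\sigma^\top h$, and the $O(h^{3/2})$ bound on the third moment), standard discrete-time martingale/Gronwall arguments give a bound of the form $\sup_n \E^{\Pb_n}[\|X\|^2] < \infty$, and in fact a higher-moment bound such as $\sup_n \E^{\Pb_n}[\|X\|^{2+\delta_0}] < \infty$ for some $\delta_0 > 0$ if one pushes the third-moment control, which is what upgrades tightness in $\Om$ to $\Wc_2$-relative compactness. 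On the $\A$-component, since $A$ is compact and $\rho$ is bounded (Assumption~\ref{assum:approximation}$\mathrm{(i)}$), the measures $\Lambda$ live in a compact subset of $\A_2$ automatically, so no moment estimate is needed there. Tightness of $(X$-laws$)$ in $C([0,T],\R^n)$ then follows from a Kolmogorov/Aldous-type criterion: the increments over $[t_k,t_{k+1}]$ are controlled by the first two conditional moments of $H_h$ plus the growth condition \eqref{eq:growth_condition}, giving the required modulus-of-continuity estimate, and the linear interpolation defining $\Xh^\gamma$ does not spoil this. Combining tightness with the uniform higher-moment bound yields relative compactness in $(\Pc_2(\Omb), \Wc_2)$.

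Next, for a limit point $\Pb_\infty$ (along a subsequence, still indexed by $n$), I would show $\Pb_\infty \in \Pcb_R$, i.e.\ that it satisfies the integrability condition \eqref{eq:PcR_integrability} and that $(C_t(\varphi))_{t\in[0,T]}$ is a $(\Fbb,\Pb_\infty)$-martingale for every $\varphi \in C^2_b(\R^n)$. The integrability and initial-condition parts are immediate: $\Pb_\infty[X_0 = x_0] = 1$ by weak convergence since $X_0$ is constant under each $\Pb_n$, and the moment bound \eqref{eq:PcR_integrability} passes to the limit by the uniform higher-moment estimate and lower semicontinuity (or uniform integrability) under $\Wc_2$-convergence. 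For the martingale property, the argument is the classical one: one writes the discrete analogue of $C_t(\varphi)$ for $\Pb_n$ using a Taylor expansion of $\varphi(X_{t_{k+1}}) - \varphi(X_{t_k})$ along the increment $H_{h_n}$, uses Assumption~\ref{assum:approximation}$\mathrm{(ii)}$ to see that the first-order term contributes $\mu \cdot D\varphi\, h_n$, the second-order term $\frac12\mathrm{Tr}[\sigma\sigma^\top D^2\varphi]\,h_n$, and the third-order remainder is $O(h_n^{3/2})$ per step hence $O(h_n^{1/2})$ in total, so the discrete process $C^{(n)}_t(\varphi)$ differs from a genuine $\Pb_n$-martingale by a term vanishing in $L^1$. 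Then for fixed $s \le t$ and a bounded continuous $\Fcb_s$-measurable test functional $\xi$, one passes to the limit in $\E^{\Pb_n}[(C^{(n)}_t(\varphi) - C^{(n)}_s(\varphi))\xi] \to 0$; here continuity of $\mu,\sigma,\varphi$ and its derivatives (Assumption~\ref{assum:approximation}$\mathrm{(i)}$) plus the uniform integrability supplied by the moment bounds justify the convergence of the expectations under $\Wc_2$. This gives $\E^{\Pb_\infty}[(C_t(\varphi)-C_s(\varphi))\xi]=0$, i.e.\ $\Pb_\infty \in \Pcb_R$.

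Finally I would check the constraint membership $\Pb_\infty \in \Pcb_R(0)$, i.e.\ $\E^{\Pb_\infty}[\Psi_i(X_\cdot)] = 0$ for $i \le m$ and $\le 0$ for $i > m$. Since each $\Pb_n \in \Pcb_W^{h_n}(0)$ we have $\E^{\Pb_n}[\Psi_i(X_\cdot)] = 0$ ($i\le m$) and $\le 0$ ($i > m$); the functionals $\Psi_i$ are continuous and bounded by $K(1+\|\om\|^2)$ (Assumption~\ref{assum:approximation}$\mathrm{(i)}$), and $\Wc_2$-convergence is precisely convergence in law together with convergence of second moments, which implies convergence of expectations of continuous functionals with at most quadratic growth. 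Hence $\E^{\Pb_\infty}[\Psi_i(X_\cdot)] = \lim_n \E^{\Pb_n}[\Psi_i(X_\cdot)]$, and the equalities/inequalities pass to the limit. The main obstacle, and the step deserving the most care, is the upgrade from plain weak convergence (tightness in $C([0,T],\R^n)$) to $\Wc_2$-convergence: this is what makes the quadratic-growth functionals $\Phi$ and $\Psi_i$ behave well in the limit, and it is exactly where the $O(h^{3/2})$ third-moment bound in Assumption~\ref{assum:approximation}$\mathrm{(ii)}$ is used, to produce a uniform bound on a moment strictly higher than the second and thereby uniform integrability of $\|X\|^2$ along the sequence. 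Everything else is the standard Kushner--Dupuis / martingale-problem limiting machinery.
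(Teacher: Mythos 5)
Your proposal is correct and follows essentially the same route as the paper: a discrete Gronwall argument plus the third-moment control from Assumption \ref{assum:approximation}$\mathrm{(ii)}$ (via BDG/Jensen in the paper) to get a uniform moment bound strictly above order two, hence tightness upgraded to $\Wc_2$-relative compactness (the $\A$-component being automatic since $A$ is compact), then identification of limit points through the martingale problem via a Taylor expansion of $\varphi$ along the increments $H_{h_n}$, and finally passage of the expectation constraints to the limit using the quadratic growth of $\Psi_i$ together with $\Wc_2$-convergence. The only cosmetic difference is that the paper invokes the tightness criterion of Jacod--Memin--Metivier rather than a Kolmogorov/Aldous-type argument, which changes nothing of substance.
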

\begin{proof}
First, as $A$ is compact, then $\A$ is also compact under the weak convergence topology.
Further, for each $n \ge 1$, let us denote $t^n_k := k h_n$ for $k=0, \cdots, N_n$, then for sequences $(\alpha^n_k, U^n_k)_{0 \le k \le N_n-1}$ (see \eqref{eq:property_Pch})
$$
X_{t^n_{k+1}} = X_{t^n_k} + H_{h_n} (t^n_k,  X_{t^n_k \wedge \cdot}, \alpha^n_k, U^n_k),~~\Pb_n \mbox{--a.s.},
$$
so that
\begin{align*}
\big| X_{t^n_{k+1}} \big|^2 
\le \ &
3 x_0^2 
+ 
3 \, \Big| \sum_{i \le k} \mu(t^n_i, X_{t^n_i \wedge \cdot}, \alpha^n_i) h_n \Big|^2  \\
& \qquad +
3 \, \Big| \sum_{i \le k} \Big(\! H_{h_n}(t^n_i,  X_{t^n_i \wedge \cdot}, \alpha^n_i, U^n_i) - \mu(t^n_i, X_{t^n_i \wedge \cdot}, \alpha^n_i) h_n \!\Big) \Big|^2.
\end{align*}
Let $S^n_k :=  \E^{\Pb_n} \big[ \sup_{0 \le i \le k+1} |X_{t^n_i}|^2 \big]$, using \eqref{eq:growth_condition} and Assumption \ref{assum:approximation}.$\mathrm{(ii)}$,
it follows by direct computation that
$$
S^n_{k+1} ~\le~ \sum_{i \le k} C (S^n_i + 1) h_n,
~~\mbox{for some constant}~C~\mbox{independent of}~n.
$$
Then by the discrete time Gronwall lemma, one has
$$
\sup_{n \ge 1}  \E^{\Pb_n} \Big[ \sup_{0 \le k \le N_n} \big| X_{t^n_k} \big|^2 \Big] < \infty.
$$
Consider the processes $\sum_{i \le k} \mu(t^n_i, X_{t^n_i \wedge \cdot}, \alpha^n_i) h_n $ and $\sum_{i \le k} \sigma \sigma^{\top} (t^n_i, X_{t^n_i \wedge \cdot}, \alpha^n_i) h_n $.
It is easy to deduce that $(\Pb_n|_{\Om})_{n \ge 1}$ is relatively compact under the weak convergence topology, 	by using Theorem 2.3 of \cite{JMM}.
Thus the sequence $(\Pb_n)_{n \ge 1}$ is relatively compact under the weak convergence topology.

We can consider the 3rd moment to compute 
$\E^{\Pb_n} \big[ \big| \sum_{i \le k} \mu(t^n_i, X_{t^n_i \wedge \cdot}, \alpha^n_i) h_n \big|^3 \big]$
and
\begin{align*}
&
\E^{\Pb_n} \Big[ \sup_{k \le N_n}  \Big| \sum_{i \le k} \Big(\! H_{h_n}(t^n_i,  X_{t^n_i \wedge \cdot}, \alpha^n_i, U^n_i) - \mu(t^n_i, X_{t^n_i \wedge \cdot}, \alpha^n_i) h_n \!\Big) \Big| ^3 \Big]\\
& \quad \le
C T^{3/2} \E^{\Pb_n} \Big[ \Big( \frac{h_n}{T} \sum_{k \le N_n} \Big(\! H_{h_n}(t^n_k,  X_{t^n_k \wedge \cdot}, \alpha^n_k, U^n_k) - \mu(t^n_i, X_{t^n_k \wedge \cdot}, \alpha^n_k) h_n \!\Big)^2 \Big)^{3/2} \Big] \\
&\quad \le
C T^{3/2}  \E^{\Pb_n} \Big[ \frac{h_n}{T} \sum_{k \le N_n} \Big|\! H_{h_n}(t^n_k,  X_{t^n_k \wedge \cdot}, \alpha^n_k, U^n_k) - \mu(t^n_i, X_{t^n_k \wedge \cdot}, \alpha^n_k) h_n \!\Big|^3 \Big],
\end{align*}
where the above two inequality follows by BDG inequality and Jensen's inequality.
Then using again Assumption \ref{assum:approximation}.$\mathrm{(ii)}$, one deduces that
$$
\sup_{n \ge 1} \E^{\Pb_n} \big[ \|X \|^3 \big] < \infty,
$$
so that $(\Pb_n)_{n \ge 1}$ is relatively compact w.r.t.\@ the $\Wc_2$ distance (recall that $A$ is compact).

Let $\Pb$ be a limit of $(\Pb_n)_{n \ge 1}$ w.r.t.\@ the $\Wc_2$ distance.	
To prove that $\Pb \in \Pcb_R$, it is enough to prove that, for any $\varphi \in C^2_b(\R^n)$, $0 < s_1 < \cdots s_k < s < t$, $f \in C_b(\R^{(n+p) \x k})$ and $\phi_j \in C_b([0,T] \x A)$, one has
\begin{align*}
& \E^{\Pb} \Big[ f(X_{s_i}, \Lambda_{s_i}(\phi_j); ~1 \le j \le p, 1 \le i \le k)  \big( C_t (\varphi) - C_s(\varphi) \big) \Big] \\
& \qquad = 
\lim_{n \to \infty} \E^{\Pb_n} \Big[ f(X_{s_i}, \Lambda_{s_i}(\phi_j); ~1 \le j \le p, 1 \le i \le k)  \big( C_t (\varphi) - C_s(\varphi) \big) \Big] 
=0.
\end{align*}
To prove the limit property, it is enough to notice that
\begin{eqnarray*}
&& \E^{\Pb_n} \Big[ f(X_{s_i}, \Lambda_{s_i}(\phi_j)~; 1 \le j \le p, 1 \le i \le k)  \big( C_t (\varphi) - C_s(\varphi) \big) \Big]  \\
&=&
\E^{\P^{\gamma_{h_n}}} \Big[  
f \Big( \Xh^{\gamma_{h_n}}_{s_i}, \int_0^{s_i}\phi_j(\alpha^{\gamma_{h_n}}_s)ds); j, i \Big)  \\
&&~~~~~~~~~~~~~
\Big( \varphi \big( \Xh^{\gamma_{h_n}}_t \big) -  \varphi \big( \Xh^{\gamma_{h_n}}_s \big) + \int_s^t \Lc \varphi \big( r, \Xh^{\gamma_{h_n}}_{r\wedge \cdot}, \alpha^{\gamma_{h_n}}_r,  \Xh^{\gamma_{h_n}}_r \big) dr  \Big) 
\Big].
\end{eqnarray*}
Applying Taylor formula on $\varphi \big( \Xh^{\gamma_{h_n}}_t \big) -  \varphi \big( \Xh^{\gamma_{h_n}}_s \big)$, 
then using the dynamic of $X^{\gamma_{h_n}}$ in \eqref{eq:X_gamma_dynamic} and Assumption \ref{assum:approximation}.$\mathrm{(ii)}$,
it follows by direct computation that the limit equals to $0$.
Then $\Pb \in \Pcb_R$.

Finally, $\Pb$ is the limit of $(\Pb_n)_{n \ge 1}$ w.r.t.\@ the $\Wc_2$ distance
and $\Psi_i, ~i=1, \cdots, m+ \ell$ have all quadratic growth, 
thus $\E^{\Pb_n} [ \Psi_i(X_{\cdot}) ] \to \E^{\Pb} [ \Psi_i(X) ]$.
As $\Pb_n \in \Pcb^{h_n}_W(0)$, it follows that $\Pb \in \Pcb_R(0)$.
\end{proof}

Let $(n_k)_{k \ge 1}$ be a subsequence such that $\Pb_{n_k} \to \Pb_* \in \Pcb_R(0)$,
then 
\begin{equation} \label{eq:limitinf}
\liminf_{h \to 0} V_h(0)
~=~
\lim_{k \to \infty} \E^{\P_{n_k}} \big[ \Phi \big( X_{\cdot} \big) \big]
~=~
\E^{\Pb_*} \big[ \Phi \big( X_{\cdot} \big) \big]
~\ge~
V(0).
\end{equation}

\paragraph{Step 3} We prove here $\limsup_{h \to 0} V_h(0) \le V(0)$.

\begin{lemma} \label{lemm:Discret_Approx_Relax}
Let $\Pb \in \Pcb_R(0)$, then for any $\eta_0 > 0$, $\eta > 0$ and $h_0 > 0$, 
there exist $h \le h_0$, $z \in \R^m \x \R^{\ell}_+$ and a weak discrete time control $\gamma_h$
such that $\|z\|_1 \le \eta$, $\gamma_h \in \Gamma_h(z)$ and
$$
\Big| \E^{\Pb} \big[ \Phi(X_{\cdot}) \big] 
- 
\E^{\P^{\gamma_h}} \big[ \Phi \big( \widehat X^{\gamma_h}_{\cdot} \big)  \big] \Big | 
~\le~ 
\eta_0.
$$
\end{lemma}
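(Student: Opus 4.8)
The plan is to obtain $\gamma_h$ by composing two approximations, and to upgrade convergence of laws to convergence of the relevant expectations using that $\Phi$ and $\Psi_i$ are continuous with quadratic growth while convergence in $\Wc_2$ forces uniform integrability of $\|X\|^2$. First, a \emph{relaxation step}: by the density statement of Lemma~\ref{lemm:PS_to_PR}$\mathrm{(ii)}$, choose $\gamma' \in \Gamma_{S,0}$, piecewise constant on a grid $0 = s_0 < \cdots < s_M = T$, such that $\Pb' := \P^{\gamma'} \circ (X^{\gamma'}, \Lambda^{\gamma'})^{-1}$ is so $\Wc_2$-close to $\Pb$ that $|\E^{\Pb'}[\Phi(X_{\cdot})] - \E^{\Pb}[\Phi(X_{\cdot})]| \le \eta_0/2$ and $|\E^{\Pb'}[\Psi_i(X_{\cdot})] - \E^{\Pb}[\Psi_i(X_{\cdot})]|$ is as small as needed for all $i$. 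A routine further perturbation, still inside $\Gamma_{S,0}$ and still $\Wc_2$-close, lets me assume in addition that the control value on $[s_j, s_{j+1})$ is a bounded continuous function $g_j$ of the Brownian increments $B^{\gamma'}_{s_1} - B^{\gamma'}_{s_0}, \dots, B^{\gamma'}_{s_j} - B^{\gamma'}_{s_{j-1}}$.

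Second, a \emph{time-discretization step}. Given $h_0 > 0$, take $h = T/N$ with $N \in \N$, $h \le h_0$, and every $s_j \in \T_h$ (after a harmless refinement of the $s_j$). I build a weak discrete time control term $\gamma_h$ on a probability space carrying the uniforms $U^{\gamma_h}_k$ together with an auxiliary rescaled random walk $\widetilde B^h$ on $\T_h$, coupled to the $U^{\gamma_h}_k$ in such a way that each $U^{\gamma_h}_k$ is still $\Uc[0,1]$ and independent of the past, and the martingale part of the recursion \eqref{eq:X_gamma_dynamic} built from $H_h$ has the correct limiting cross-bracket with $\widetilde B^h$; the discrete control on $[s_j, s_{j+1})$ is set to $g_j$ evaluated at the increments of $\widetilde B^h$ across $s_1, \dots, s_j$. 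Using the uniform bound $\E[|H_h|^3] \le C h^{3/2}$ of Assumption~\ref{assum:approximation}$\mathrm{(ii)}$ for tightness, its first two moment identities to identify the limiting characteristics, and the discrete Gronwall estimate already used in the proof of Lemma~\ref{eq:limit_martingale_pb} for the uniform moment bounds, a martingale central limit argument \`a la Kushner and Dupuis \cite{KushnerDupuis} gives $\P^{\gamma_h} \circ (\widehat X^{\gamma_h}, \Lambda^{\gamma_h})^{-1} \to \Pb'$ under $\Wc_2$ as $h \to 0$.

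It follows that $\E^{\P^{\gamma_h}}[\Phi(\widehat X^{\gamma_h}_{\cdot})] \to \E^{\Pb'}[\Phi(X_{\cdot})]$ and $\E^{\P^{\gamma_h}}[\Psi_i(\widehat X^{\gamma_h}_{\cdot})] \to \E^{\Pb'}[\Psi_i(X_{\cdot})]$ for every $i$. Taking $h$ small enough and then setting $z_i := \E^{\P^{\gamma_h}}[\Psi_i(\widehat X^{\gamma_h}_{\cdot})]$ for $i \le m$ and $z_{m+j} := \max\{0, \E^{\P^{\gamma_h}}[\Psi_{m+j}(\widehat X^{\gamma_h}_{\cdot})]\}$ for $j = 1, \dots, \ell$, one gets $\gamma_h \in \Gamma_h(z)$ with $\|z\|_1 \le \eta$ and $|\E^{\P^{\gamma_h}}[\Phi(\widehat X^{\gamma_h}_{\cdot})] - \E^{\Pb}[\Phi(X_{\cdot})]| \le \eta_0$, which is the claim.

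The step I expect to be delicate is the construction of $\gamma_h$: one must couple the discretized noise $\widetilde B^h$ with the uniforms $U^{\gamma_h}_k$ so that the martingale part of $\widehat X^{\gamma_h}$ and $\widetilde B^h$ share the right asymptotic cross-variation, since this is exactly what forces the limiting stochastic differential equation for $\widehat X^{\gamma_h}$ to be driven by the \emph{same} Brownian motion that carries the limiting control $\alpha^{\gamma'}$; without it the limit would be some relaxed rule in $\Pcb_R$ different from $\Pb'$, and the values $\E[\Psi_i]$ would not converge to the intended ones. Making this work for a general, possibly degenerate, $\sigma$ and a kernel $H_h$ prescribed only through its first two moments and a third-moment bound is the technical heart; everything else is standard weak-convergence bookkeeping together with the quadratic-growth passage to the limit in the expectations. (When $\sigma$ is non-degenerate one may instead rewrite $\alpha^{\gamma'}$ as a path-feedback of $X^{\gamma'}$ and feed the corresponding discrete feedback directly into \eqref{eq:X_gamma_dynamic}, avoiding $\widetilde B^h$.)
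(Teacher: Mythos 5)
Your proposal follows essentially the same route as the paper's proof: (i) use the density of piecewise-constant strong rules (Lemma \ref{lemm:PS_to_PR}) to replace $\Pb$ by a piecewise-constant strong control whose cost and constraint values are within the prescribed tolerances; (ii) perturb the control so that on each subinterval it is an (almost surely) continuous function of the discretized Brownian increments; (iii) feed this feedback map into the discrete dynamics \eqref{eq:X_gamma_dynamic}, pass to the limit under $\Wc_2$ by a martingale-problem argument as in Lemma \ref{eq:limit_martingale_pb}, and identify the limit through uniqueness of the Lipschitz SDE \eqref{eq:SDE_strong}; (iv) conclude via the quadratic growth of $\Phi,\Psi_i$ and define $z$ from the realized constraint values (your positive-part choice for the inequality components is the correct reading of $z \in \R^m \x \R^{\ell}_+$). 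The one point where you genuinely diverge --- and where your write-up has a hole that you yourself flag as the ``technical heart'' --- is the construction of the discrete noise on which the feedback acts: you postulate an auxiliary walk $\widetilde B^h$ coupled to the uniforms $U^{\gamma_h}_k$ with the right asymptotic cross-bracket, but you never construct this coupling. The paper dispenses with any auxiliary randomness and any asymptotic cross-variation argument by defining the discrete Brownian proxy directly from the scheme's own compensated increments, namely $B^{\gamma_n}_{t_i} := \sum_{j \le i-1} \sigma^{-1}(t_j, \Xh^{\gamma_n}, \alpha^{\gamma_n}_{t_j}) \big( H_{h_n}(t_j, \Xh^{\gamma_n}, \alpha^{\gamma_n}_{t_j}, U^{\gamma_n}_{j+1}) - \mu(t_j, \Xh^{\gamma_n}, \alpha^{\gamma_n}_{t_j}) h_n \big)$, so that the cross-bracket with the martingale part of $\widehat X^{\gamma_n}$ is correct by construction and the limiting SDE is automatically driven by the very Brownian motion of which the control is a feedback; this is the concrete device your sketch is missing, and with it your argument becomes the paper's. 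Note that this explicit formula uses $\sigma^{-1}$, i.e.\@ the paper's proof is itself written under (implicit) invertibility of $\sigma$, so your caveat about degenerate diffusion coefficients applies equally to the paper's argument and is not a defect specific to your approach; your parenthetical remark about rewriting the control as a path-feedback of $X^{\gamma'}$ in the non-degenerate case is in the same spirit.
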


\begin{proof}
Let  $\Pb \in \Pcb_R(0)$, $\eta_0 > 0$, $\eta > 0$ and $h_0 > 0$.
By Lemma \ref{lemm:PS_to_PR}, there is a piecewise constant strong control $\gamma^{*,1} \in \Gamma_{S,0}$ such that
$\gamma^{*,1} \in \Gamma_S(z_1)$ for some $z_1 \in \R^m \x \R^{\ell}_+$ satisfying $\|z_1\|_1 \le \eta/3$ and 
$ \big|  \E^{\Pb} \big[ \Phi(X_{\cdot}) \big]  - \E^{\P^{\gamma^{*,1}}} \big[ \Phi(X^{\gamma^{*,1}}_{\cdot}) \big] \big| \le \eta_0/3$.

Further, on the probability space $(\Om^{\gamma^{*,1}}, \Fc^{\gamma^{*,1}})$,
one can approximate the piecewise constant control $\alpha^{\gamma^{*,1}}$ by piecewise constant control $\alpha^{*,2}$
such that 
$$
\gamma^{*,1} := (\Om^{\gamma^{*,1}}, \Fc^{\gamma^{*,1}}, \F^{\gamma^{*,1}}, \P^{\gamma^{*,1}}, X^{\gamma^{*,1}}, B^{\gamma^{*,1}}, \alpha^{\gamma^{*,2}}  )\in \Ac(z_2)
$$ 
for some $z_2$ satisfying $\| z_2 - z_1\|_1 \le \eta/3$, 
where each $k \ge 1$, $\alpha^{*,2}_{t_k}$ is a function of $(B^{\gamma^{*,1}}_{t_i} ~:i \le k)$, which is almost surely continuous under the law of $(B^{\gamma^{*,1}}_{t_i} ~:i \le k)$,
and such that
$ \big| \E^{\P^{\gamma^{*,1}}}\big[ \Phi(X^{\gamma^{*,1}}_{\cdot}) \big] -  \E^{\P^{\gamma^{*,1}}}\big[ \Phi(X^{\gamma^{*,2}}_{\cdot}) \big] \big| \le \eta_0/3$.

Next, recall that $\alpha^{*,2}_{t_k}$ are a.s.\@ continuous functions of $(B_{t_i} ~:i \le k)$ for all $k \ge 1$.
One can consider a sequence of discrete time weak controls 
$(\gamma_n)_{n \ge 1}$ such that $\gamma_n \in \Gamma_{h_n}$ for all $n \ge 1$, $h_n \to 0$, and moreover
$$
\alpha^{\gamma_n}_{t_k} = \alpha^{*,2}_{t_k} (B^{\gamma_n}_{t_i}~:i \le k),
$$
where
$$
B^{\gamma_n}_{t_i} := \sum_{j \le i-1} \sigma^{-1} (t_j, \Xh^{\gamma_n}, \alpha^{\gamma_n}_{t_j}) \Big(  H_{h_n} ((t_j, \Xh^{\gamma_n}, \alpha^{\gamma_n}_{t_j}, U^{\gamma_n}_i) - \mu(t_j, \Xh^{\gamma_n}, \alpha^{\gamma_n}_{t_j}) h_n \Big).
$$
Denote $\Lambda^{\gamma_n}(ds, da) := \delta_{\alpha^{\gamma_n}_s}(da) ds$,
then by similar martingale problem arguments  as in Lemma \ref{eq:limit_martingale_pb}, one can check that
$$
\Pb^{\gamma_n} \circ \big( \widehat X^{\gamma_n}_{\cdot}, \widehat B^{\gamma_n}, \Lambda^{\gamma_n} \big)^{-1}
~\to~
\P^* \circ \big(X^*, B^*, \delta_{\alpha^*_s}(da) da \big)^{-1},
$$
for some probability space $(\Om^*, \Fc^*, \P^*)$, equipped with a Brownian motion $B^*$ and $X^*$ satisfies the SDE \eqref{eq:SDE_strong} with Brownian motion $B^*$ and control process $\alpha^*$,
and moreover,
$\alpha^*$ is a piecewise constant control process satisfying
$$
\alpha^*_{t_k} = \alpha^{*,2}_{t_k} (B^*_{t_i}~:i\le k).
$$
Under the Lipschitz condition above \eqref{eq:Lip}, one has the uniqueness of the SDE \eqref{eq:SDE_strong}, then it follows that
$\P^{\gamma_n} \circ \big(\widehat X^{\gamma_n}_{\cdot} \big)^{-1} \to \P^* \circ (X^*)^{-1} = \P_0 \circ  \big( X^{\alpha^{*,2}}_{\cdot} \big)^{-1}$ under $\Wc_2$,
as $n \to \infty$.
Notice that $\Phi$ and $\Psi_i$, $i=1, \cdots, m+ \ell$ are all of quadratic growth,
then there is some discrete time weak control $\gamma_h$ such that
$\gamma_h \in \Gamma_h(z_3)$ for $h \le h_0$ and $\|z_3 - z_2\|_1 \le \eta/3$ and
$$
\Big| \E^{\P_0} \big[ \Phi(X^{\alpha^{*,2}}_{\cdot}) \big] 
- 
\E^{\P^{\gamma_h}} \big[ \Phi \big( \widehat X^{\gamma_h}_{\cdot} \big)  \big] \Big | 
~\le~ 
\eta_0/3.
$$
Finally, combining all the above estimates, it follows that $\gamma_h$ is the required weak discrete time control.
\end{proof}

Let $\Pb^* \in \Pcb_W(0)$ be an $\eps$--optimal solution of the problem $V(0)$, i.e.\@
$$
\E^{\Pb^*}[ \Phi(X_{\cdot}) ] - \eps \le  V(0) = \inf_{\Pb \in \Pcb_W(0)} \E^{\Pb}[ \Phi(X_{\cdot})].
$$
Let $\eta_0 > 0$ be an arbitrary constant,
we use Lemma \ref{lemm:Discret_Approx_Relax} to obtain a sequence $(\gamma_{h_n})_{n \ge 1}$ and a sequence $(z_n)_{ n \ge 1}$
such that  $\gamma_{h_n} \in \Gamma_{h_n}(z_n)$ for all $n \ge 1$, $\|z_n\|_1 \to 0$ and 
$$
\Big| \E^{\Pb^*} \big[ \Phi(X_{\cdot}) \big] 
- 
\E^{\P^{\gamma_{h_n}}} \big[ \Phi \big( \widehat X^{\gamma_{h_n}}_{\cdot} \big)  \big] \Big | 
~\le~ 
\eta_0.
$$
Using the Lipschitz property of $V_h(\cdot)$ in Lemma \ref{lemm:Vb_Lip},
it follows that
\begin{align*}
\limsup_{h \to 0} V_h(0)
~\le~ &
\limsup_{n \to \infty} \ \Big( V_h(z_n) + \frac{4M}{\eps} \|z_n\|_1 \Big) \\
~\le~ &
\limsup_{n \to \infty} \ \E^{\P^{\gamma_{h_n}}} \big[ \Phi \big( \widehat X^{\gamma_{h_n}}_{\cdot} \big) \big] 
~\le~
\E^{\Pb^*} \big[ \Phi(X_{\cdot}) \big] + \eta_0.
\end{align*}
Recall that $\Pb^*$ is an $\eps$--optimal solution of the problem $V(0)$ and $\eta_0 > 0$ is arbitrary,
it follows that
$\limsup_{h \to 0} V_h(0) \le V(0)$.
By \eqref{eq:limitinf}, together with the duality results in Theorem \ref{thm:duality}, one can then conclude that 
$$
D_h(0) = V_h(0) \longrightarrow V(0) = D(0),
~~~\mbox{as}~~
h \longrightarrow 0.
$$

\paragraph{Step 4} We finally prove the convergence rate result in \eqref{eq:cvg_rate}.
Notice that the dual optimizers $\lambda^*$ for problem $D(0) = \sup_{\lambda} d(\lambda)$ and $\lambda^*_h$ for $D_h (0) = \sup_{\lambda_h} d_h(\lambda)$ are both bounded by $\frac{2M}{\eps}$ by Theorem \ref{thm:duality},
then under Assumption \ref{assum:cvg_rate}, one has
$$
|D(0) - D_h (0)| 
~\le~ 
C h^{\rho}.
$$
Finally, \eqref{eq:cvg_rate} follows from  $V_h(0) = D(0)$ and $V(0) = D(0)$ (Theorem \ref{thm:duality}).

\subsection{Proof of Proposition \ref{prop:VS_V}}
\label{subsec:proof_VS_V}

We first notice that $V \le V_S$ as $\Gamma_S \subset \Gamma$.
Further, $V$ is convex on $\R^m \x \R^{\ell}$, and is continuous on $B_{\eps/2}(0)$.
Then
$$
V(z) ~\le~ V^{l.s.c.}_S(z)~\mbox{for}~z \in B_{\eps/2}(0),~~\mbox{and}~~ V(z) ~\le~ V^{conv}_S(z) ~\mbox{for}~z \in \R^m \x \R^{\ell}.
$$
Let $z \in B_{\eps/2}(0)$ so that $V(z) \in \R$.
By similar arguments as in Lemma \ref{lemm:Discret_Approx_Relax}, for any $\Pb \in \Pcb(z)$, 
there is a sequence $(z_n, \Pb_n)_{n \ge 1}$ such that $\Pb_n \to \P$, $z_n \to z$ as $n \to \infty$, and $\Pb_n \in \Pcb_S(z_n)$ for each $n \ge 1$.
It follows that $V(z) \ge \lim_{n \to \infty} V_S(z_n)$ and hence
$$
V(z) ~\ge~ V^{l.s.c.}_S(z), ~\mbox{for}~z \in B_{\eps/2}(0).
$$
Therefore, $V = V^{l.s.c.}_S$ on $B_{\eps/2}(0)$.
By the convexity and Lipschitz continuity of $V = V^{l.s.c.}_S$ on $B_{\eps/2}(0)$, it follows that $V^{l.s.c.}_S$ is the convex envelop of $V_S$ restricted on $B_{\eps/2}(0)$,
which implies that $V(z) = V^{l.s.c.}_S(z) \ge V^{conv}_S(z)$ for $z \in B_{\eps/2}(0)$.
Therefore, one has
$$
V(z) = V^{l.s.c.}_S(z) = V^{conv}_S(z),~\mbox{for all}~z \in B_{\eps/2}(0).
$$

\bibliographystyle{plain}

\end{document}